\title{Global $N_\infty$-operads}
\author{Miguel Barrero}
\address{IMAPP, Radboud University Nijmegen, The Netherlands}
\email{m.barrero@math.ru.nl}
\newcommand{\adjunction}[4]{%
  #1\colon #2%
  \mathrel{\vcenter{%
    \offinterlineskip\m@th
    \ialign{%
      \hfil$##$\hfil\cr
      \longrightharpoonup\cr
      \noalign{\kern-.3ex}
      \smallbot\cr
      \longleftharpoondown\cr
    }%
  }}%
  #3 \noloc #4%
}
\newcommand{\longrightharpoonup}{\relbar\joinrel\rightharpoonup}
\newcommand{\longleftharpoondown}{\leftharpoondown\joinrel\relbar}
\newcommand\noloc{%
  \nobreak
  \mspace{6mu plus 1mu}
  {:}
  \nonscript\mkern-\thinmuskip
  \mathpunct{}
  \mspace{2mu}
}
\newcommand{\smallbot}{%
  \begingroup\setlength\unitlength{.15em}%
  \begin{picture}(1,1)
  \roundcap
  \polyline(0,0)(1,0)
  \polyline(0.5,0)(0.5,1)
  \end{picture}%
  \endgroup
}
\tikzset{double line with arrow/.style args={#1,#2}{decorate,decoration={markings,%
mark=at position 0 with {\coordinate (ta-base-1) at (0,1pt);
\coordinate (ta-base-2) at (0,-1pt);},
mark=at position 1 with {\draw[#1] (ta-base-1) -- (0,1pt);
\draw[#2] (ta-base-2) -- (0,-1pt);
}}}}
\tikzset{Equal/.style={-,double line with arrow={-,-}}}
\newtheorem{thm}{Theorem}[section]
\newtheorem{coro}[thm]{Corollary}
\newtheorem{lemm}[thm]{Lemma}
\newtheorem{prop}[thm]{Proposition}
\newtheorem*{thmintro}{Theorem}
\theoremstyle{definition}
\newtheorem{defi}[thm]{Definition}
\theoremstyle{remark}
\newtheorem{constr}[thm]{Construction}
\theoremstyle{remark}
\theoremstyle{remark}
\newtheorem{rem}[thm]{Remark}
\theoremstyle{remark}
\newtheorem{ex}[thm]{Example}
\DeclareMathAlphabet{\mathpzc}{OT1}{pzc}{m}{it}
\newcommand{\Topcat}{\underline{\smash{\mathrm{Top}}}}
\newcommand{\id}{\mathrm{id}}
\newcommand{\RR}{\mathbb{R}}
\newcommand{\NN}{\mathbb{N}}
\DeclareMathOperator{\Ima}{Im}
\DeclareMathOperator{\Map}{Map}
\DeclareMathOperator{\Ho}{Ho}
\newcommand{\cat}{\mathscr{C}}
\newcommand{\Spc}{\underline{\smash{\mathpzc{Spc}}}}
\newcommand{\defeq}{\vcentcolon=}
\DeclareMathOperator*{\colim}{colim}
\newcommand{\opp}{\mathrm{op}}
\newcommand{\GTopcat}{\underline{\smash{\mathrm{GTop}}}}
\newcommand{\SnSpc}{\underline{\smash{\Sigma_n}}\underline{\smash{\mathpzc{Spc}}}}
\newcommand{\Lcat}{\underline{\mathrm{L}}}
\newcommand{\Opn}{\mathcal{O}_n}
\newcommand{\Ug}{\mathcal{U}_G}
\newcommand{\Uk}{\mathcal{U}_K}
\newcommand{\Op}{\mathcal{O}}
\newcommand{\Pop}{\mathcal{P}}
\newcommand{\FF}{\mathcal{F}}
\newcommand{\fat}{\mathscr{F}}
\newcommand{\sobspc}{\Sigma_\ast\textrm{-}\Spc}
\newcommand{\sob}{\Sigma_\ast\textrm{-}\cat}
\newcommand{\Nin}{N_\infty}
\newcommand{\Ningl}{\Nin^{\Lie}}
\newcommand{\Igl}{I_{\Lie}}
\newcommand{\IG}{I_G}
\newcommand{\IGc}{\overline{I}_G}
\newcommand{\Ie}{I_e}
\newcommand{\All}{\mathpzc{All}}
\newcommand{\None}{\emptyset}
\newcommand{\GHTopcat}{\underline{\smash{(\mathrm{G}\times\mathrm{H})}}    \underline{\smash{\mathrm{Top}}}}
\DeclareMathOperator{\res}{res}
\DeclareMathOperator{\tr}{tr}
\newcommand{\un}{\boldsymbol{1}}
\newcommand{\FMGn}{\FF^M_{G, n}}
\newcommand{\FNGn}{\FF^N_{G, n}}
\newcommand{\FMNGn}{\FF^{M \times N}_{G, n}}
\newcommand{\FMHn}{\FF^M_{H, n}}
\newcommand{\FMKn}{\FF^M_{K, n}}
\newcommand{\FOpGn}{\FF^\Op_{G, n}}
\newcommand{\FRGOpKn}{\FF^{R_G(\Op)}_{K, n}}
\newcommand{\FOpHn}{\FF^\Op_{H, n}}
\newcommand{\FOpHm}{\FF^\Op_{H, m}}
\newcommand{\FOpHnm}{\FF^\Op_{H, n + m}}
\newcommand{\FPopHn}{\FF^\Pop_{H, n}}
\newcommand{\OnTopcat}{\underline{\smash{\mathrm{O(n)} \mathrm{Top}}}}
\DeclareMathOperator{\catalan}{\mathbf{Cat}}
\newcommand{\Lie}{\mathpzc{Lie}}
\newcommand{\Ilie}{I_\Lie}
\DeclareMathOperator{\Sub}{Sub}
\DeclareMathOperator{\blk}{block}
\newcommand{\trSempty}{
	\begin{tikzpicture}[scale=0.3, every node/.style={draw,shape=circle,fill=black,inner sep=0pt,minimum size=1pt}]
		\node(S) at (0,0) {};
		\node(W) at (-1,1) {};
		\node(E) at (1,1) {};
		\node(N) at (0,2) {};
		\path[-]
		
		;
	\end{tikzpicture}
}
\newcommand{\trSsigma}{
	\begin{tikzpicture}[scale=0.3, every node/.style={draw,shape=circle,fill=black,inner sep=0pt,minimum size=1pt}]
		\node(S) at (0,0) {};
		\node(W) at (-1,1) {};
		\node(E) at (1,1) {};
		\node(N) at (0,2) {};
		\draw (0,0) -- (-1,1) ;
	\end{tikzpicture}
}
\newcommand{\trStau}{
	\begin{tikzpicture}[scale=0.3, every node/.style={draw,shape=circle,fill=black,inner sep=0pt,minimum size=1pt}]
		\node(S) at (0,0) {};
		\node(W) at (-1,1) {};
		\node(E) at (1,1) {};
		\node(N) at (0,2) {};
		\draw (0,0) -- (1,1) ;
	\end{tikzpicture}
}
\newcommand{\trStausigma}{
	\begin{tikzpicture}[scale=0.3, every node/.style={draw,shape=circle,fill=black,inner sep=0pt,minimum size=1pt}]
		\node(S) at (0,0) {};
		\node(W) at (-1,1) {};
		\node(E) at (1,1) {};
		\node(N) at (0,2) {};
		\draw (0,0) -- (-1,1) ;
		\draw (0,0) -- (1,1) ;
	\end{tikzpicture}
}
\newcommand{\trSsigmatoS}{
	\begin{tikzpicture}[scale=0.3, every node/.style={draw,shape=circle,fill=black,inner sep=0pt,minimum size=1pt}]
		\node(S) at (0,0) {};
		\node(W) at (-1,1) {};
		\node(E) at (1,1) {};
		\node(N) at (0,2) {};
		\draw (0,0) -- (1,1) ;
		\draw (-1,1) -- (0,2) ;
	\end{tikzpicture}
}
\newcommand{\trSall}{
	\begin{tikzpicture}[scale=0.3, every node/.style={draw,shape=circle,fill=black,inner sep=0pt,minimum size=1pt}]
		\node(S) at (0,0) {};
		\node(W) at (-1,1) {};
		\node(E) at (1,1) {};
		\node(N) at (0,2) {};
		\draw (0,0) -- (-1,1) ;
		\draw (0,0) -- (1,1) ;
		\draw (0,0) -- (0,2) ;
		\draw (-1,1) -- (0,2) ;
		\draw (1,1) -- (0,2) ;
	\end{tikzpicture}
}
\newcommand{\trStausigmaS}{
	\begin{tikzpicture}[scale=0.3, every node/.style={draw,shape=circle,fill=black,inner sep=0pt,minimum size=1pt}]
		\node(S) at (0,0) {};
		\node(W) at (-1,1) {};
		\node(E) at (1,1) {};
		\node(N) at (0,2) {};
		\draw (0,0) -- (-1,1) ;
		\draw (0,0) -- (1,1) ;
		\draw (0,0) -- (0,2) ;
	\end{tikzpicture}
}
\newcommand{\trSSsigmatoS}{
	\begin{tikzpicture}[scale=0.3, every node/.style={draw,shape=circle,fill=black,inner sep=0pt,minimum size=1pt}]
		\node(S) at (0,0) {};
		\node(W) at (-1,1) {};
		\node(E) at (1,1) {};
		\node(N) at (0,2) {};
		\draw (0,0) -- (-1,1) ;
		\draw (0,0) -- (1,1) ;
		\draw (0,0) -- (0,2) ;
		\draw (-1,1) -- (0,2) ;
	\end{tikzpicture}
}
\newcommand{\trSStautoS}{
	\begin{tikzpicture}[scale=0.3, every node/.style={draw,shape=circle,fill=black,inner sep=0pt,minimum size=1pt}]
		\node(S) at (0,0) {};
		\node(W) at (-1,1) {};
		\node(E) at (1,1) {};
		\node(N) at (0,2) {};
		\draw (0,0) -- (-1,1) ;
		\draw (0,0) -- (1,1) ;
		\draw (0,0) -- (0,2) ;
		\draw (1,1) -- (0,2) ;
	\end{tikzpicture}
}
\begin{document}
\begin{abstract}
    We define $\Nin$-operads in the globally equivariant setting and completely classify them. These global $\Nin$-operads model intermediate levels of equivariant commutativity in the global world, i. e. in the setting where objects have compatible actions by all compact Lie groups. We classify global $\Nin$-operads by giving an equivalence between the homotopy category of global $\Nin$-operads and the partially ordered set of global transfer systems, which are much simpler, algebraic objects. We also explore the relation between global $\Nin$-operads and $\Nin$-operads for a single group, recently introduced by Blumberg and Hill. One interesting consequence of our results is the fact that not all equivariant $\Nin$-operads can appear as restrictions of global $\Nin$-operads.
\end{abstract}
\maketitle
\tableofcontents

\section{Introduction}

In homotopy theory we often study objects with operations that are not strictly commutative, but instead \emph{commutative up to all higher homotopies}. Examples of these include $E_\infty$-ring spectra, $E_\infty$-spaces, or more general $E_\infty$-algebras. These $E_\infty$-structures can be codified by $E_\infty$-operads.

An operad in a symmetric monoidal category codifies a collection of abstract operations that compose in a prescribed way. An algebra over an operad is a representation of the abstract operations of the operad as concrete operations on some object. An operad in topological spaces is $E_\infty$ if its space of $n$-ary operations is weakly contractible. This implies that up to all higher homotopies an $E_\infty$-operad codifies a single $2$-ary operation that is commutative, associative and unital. All $E_\infty$-operads are equivalent, leading to a single notion of \emph{commutativity up to all higher homotopies}.

Equivariant homotopy theory has seen increased interest in recent years, in particular since the solution of the Kervaire invariant one problem by Hill, Hopkins and Ravenel in \cite{kervaire}. The $\Nin$-operads introduced by Blumberg and Hill in \cite{BLUMBERG2015658} are the equivariant analogs of $E_\infty$-operads, codyfying equivariant commutativity. There are non-equivalent $\Nin$-operads which codify different equivariant $E_\infty$-structures, that is, different levels of \emph{equivariant commutativity up to all higher homotopies}.

We illustrate this with an example. Let $C_2$ be the cyclic group with 2 elements. A $C_2$-equivariant commutative operation on a set $X$ with a $C_2$-action is a set function $m \colon X \times X \to X$ which is both $C_2$-equivariant and $\Sigma_2$-equivariant for the $\Sigma_2$-action that switches the two copies of $X$ in the source of $m$, making $m$ $(C_2 \times  \Sigma_2)$-equivariant. Let $H \leqslant C_2 \times \Sigma_2$ be the only subgroup of order $2$ not contained in either $C_2$ or $\Sigma_2$. By restriction $m$ is $H$-equivariant for the action on $X \times X$ that simultaneously acts on both copies of $X$ and flips them. Let $\Op$ be an operad in $C_2$-spaces which is non-equivariantly $E_\infty$. Thus it codifies a single operation which is commutative, associative and unital up to all higher homotopies. The operation codified by the operad $\Op$ is additionally $C_2$-equivariant if the $C_2$-fixed points of $\Op$ are weakly contractible. But this does not guarantee that the $H$-fixed points of $\Op$ are weakly contractible (i. e. that the operation codified by the operad is $H$-equivariant). In words, an operation can be separately \emph{equivariant} and \emph{non-equivariantly commutative up to all higher homotopies}. But being \emph{equivariantly fully commutative up to all higher homotopies} requires more. This observation suggests a variety of possible \emph{levels} of equivariant commutativity up to all higher homotopies.

The homotopy groups of algebras over equivariant $\Nin$-operads inherit additional structure. Let $G$ be a finite group, and let $X$ be a $G$-space. For each $K \leqslant H \leqslant G$ there is a restriction morphism $\res^H_K(X) \colon \pi^H_\ast(X) \to \pi^K_\ast(X)$ on equivariant homotopy groups. Each equivariant $\Nin$-operad encodes the existence of certain transfer morphisms $\tr^H_K(X) \colon \pi^K_\ast(X) \to \pi^H_\ast(X)$ on its algebras $X$, for some $K \leqslant H \leqslant G$. If this is the case we say that the abstract transfer from $K$ to $H$ is \emph{admissible} for the given $\Nin$-operad. These transfer and restriction morphisms assemble into an \emph{incomplete Mackey functor} \cite[Proposition~1.4]{mackey}. If $X$ is a $G$-spectrum and an algebra over an equivariant $\Nin$-operad we can instead construct norm maps that assemble into an \emph{incomplete Tambara functor} \cite[Theorem~4.14]{tambara}.

The collection of admissible abstract transfers for an equivariant $\Nin$-operad forms a relation on the set of subgroups of $G$. This relation has to be a partial order, and furthermore it has to be closed under conjugation and restriction to subgroups, due to the structure of the equivariant $\Nin$-operads. The relations on the subgroups of $G$ that satisfy these conditions are called \emph{$G$-transfer systems}. Blumberg and Hill conjectured that the associated $G$-transfer systems completely classify the equivalence types of equivariant $\Nin$-operads. The conjecture was proven separately by Gutiérrez and White in \cite{GutierrezWhite}, by Rubin in \cite{rubin1}, and by Bonventre and Pereira in \cite{BonventrePereira}.

In this paper we are interested in analogous operads encoding different levels of commutativity in the setting of \emph{global homotopy theory}. This is the homotopy theory of spaces which have compatible actions by all compact Lie groups. A globally equivariant space $X$ has an associated $G$-space $X_G$ for each compact Lie group $G$. These $G$-spaces are compatible for varying $G$ in the following strong sense. For any continuous homomorphism $\alpha \colon K \to H$ of compact Lie groups, there is a natural $K$-equivariant map $\alpha^\ast(X_H) \to X_K$, which is a $K$-homeomorphism if $\alpha$ is injective and closed. This in particular means that $X_K$ and $X_H$ are equivariantly homeomorphic if $K$ and $H$ are abstractly isomorphic. In contrast, if $X$ is a $G$-space, the restrictions of the $G$-action to subgroups of $G$ do not need to satisfy the strong compatibility condition just mentioned. The map $\alpha^\ast$ is only defined if $\alpha$ is given by restriction to a subgroup or conjugation by some element $g \in G$. We illustrate this difference with an example.

Consider the group $C_2 \times C_2$, and let $\iota_1 \colon C_2 \to C_2 \times C_2$ and $\iota_2 \colon C_2 \to C_2 \times C_2$ denote the embeddings into the first and second copy of $C_2$ respectively. If $X$ is a globally equivariant space, its underlying $(C_2 \times C_2)$-space $X_{C_2 \times C_2}$ cannot be equivalent to the discrete $(C_2 \times C_2)$-space $(C_2 \times C_2)/(C_2 \times \{e\})$. If this were the case, $X_{C_2}$ would need to be equivalent to both $\iota_1^\ast((C_2 \times C_2)/(C_2 \times \{e\}))$ the $C_2$-space which consists of two points with trivial $C_2$-action, and $\iota_2^\ast((C_2 \times C_2)/(C_2 \times \{e\}))$ which is a free $C_2$-orbit, and these are not equivalent.

This stronger compatibility condition means that global homotopy theory does not extend classical equivariant homotopy theory. But it also means that globally equivariant objects possess a richer structure than what can be interpreted from observing their $G$-actions separately. Many traditional $G$-equivariant constructions for varying $G$ assemble into a single globally equivariant construction, for example real and complex equivariant bordism and equivariant topological $K$-theory. Using this additional structure encoded by the global compatibility can lead to strong results in equivariant homotopy theory. Hausmann's proof \cite{hausmann2022global} of the equivariant analog of Quillen's theorem on formal group laws for abelian compact Lie groups is a recent example.

Consequently, algebras over globally equivariant operads are highly structured, and looking at this structure gives more information than what is available if one considers each group separately. Strictly commutative globally equivariant objects are the ultra-commutative ring spectra and ultra-commutative monoids of Schwede \cite{global}. But besides them, there are also globally equivariant objects with intermediate levels of commutativity. To model them we introduce in the present paper \emph{global $\Nin$-operads} (Definition~\ref{defiNinoperad}).

We use \emph{orthogonal spaces} and \emph{global equivalences} between them as a model for unstable global homotopy theory (see \cite[Chapter~1]{global}), and we define global $\Nin$-operads to be operads in orthogonal spaces that satisfy a certain condition similar to that of an equivariant $\Nin$-operad. In \cite{globaloperads} we constructed a model structure on the category of algebras over any operad in orthogonal spaces, we defined the equivalences between such operads, and we proved that these are precisely the morphisms of global operads that induce Quillen equivalences between the respective categories of algebras. The results of \cite{globaloperads} justify the definitions of global $\Nin$-operad and equivalence between them that we give in this paper.

The main result of this paper completely classifies global $\Nin$-operads with respect to the equivalences of global operads mentioned in the previous paragraph. We do this through \emph{global transfer systems}, a globally equivariant analog of the $G$-transfer systems that classify equivariant $\Nin$-operads. The precise statement is the following.

\begin{thmintro}[Theorem~\ref{thmmain}]
The functor $\mathpzc{T}$ that sends a global $\Nin$-operad to the global transfer system given by its admissible transfers is an equivalence between the homotopy category of global $\Nin$-operads $\Ho(\Ningl)$ and the partially ordered set $\Igl$ of global transfer systems with respect to inclusion.
\end{thmintro}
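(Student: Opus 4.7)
The plan is to verify that $\mathpzc{T}$ is an equivalence by checking well-definedness, essential surjectivity, fullness and faithfulness, exploiting that $\Igl$ is a poset so there is at most one morphism between any two of its objects. The three-step strategy mirrors the proofs in the single-group case by Gutiérrez--White, Rubin, and Bonventre--Pereira, but must be adapted to orthogonal spaces and the stronger global compatibility condition.

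First, I would check that $\mathpzc{T}$ descends to the homotopy category. The admissibility of an abstract transfer for a global $\Nin$-operad $\Op$ is detected by the weak homotopy type of the $\Gamma$-fixed points of $\Op(n)_V$ for appropriate graph subgroups $\Gamma \leqslant G \times \Sigma_n$ and sufficiently large orthogonal representations $V$. Since every global equivalence of orthogonal operads preserves these fixed points up to weak equivalence, the associated global transfer system is a homotopy invariant, and $\mathpzc{T}$ is well-defined on $\Ho(\Ningl)$.

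The main obstacle will be essential surjectivity: realizing every global transfer system $\mathcal{T} \in \Igl$ by a global $\Nin$-operad $\Op_\mathcal{T}$. I would construct $\Op_\mathcal{T}$ as a global analog of a Steiner-type (or little-disks-type) operad, parametrized by a universal family of orthogonal representations chosen compatibly across all compact Lie groups so as to realize precisely the transfers prescribed by $\mathcal{T}$. The delicate point is that global compatibility severely constrains which families of representations are admissible: one must ensure both that every transfer in $\mathcal{T}$ is realized (by including enough representations for each compact Lie group) and that no additional transfers appear unintentionally (by restricting to a family compatible with the closure conditions defining $\mathcal{T}$). This geometric construction is the technical heart of the argument and, as in the single-group case, is where most of the work goes.

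For fullness, suppose $\mathpzc{T}(\Op) \subseteq \mathpzc{T}(\Pop)$. Using the model structure on operads from \cite{globaloperads}, replace $\Op$ by a cofibrant resolution. The containment of transfer systems guarantees that the $\Gamma$-fixed points of $\Pop(n)_V$ needed to extend over each attaching cell of $\Op$ are nonempty and weakly contractible, so a standard obstruction-theoretic argument produces the desired lift $\Op \to \Pop$. For faithfulness, the same contractibility of mapping spaces implies that any two morphisms $\Op \rightrightarrows \Pop$ in the category of orthogonal operads are connected by a homotopy, so they agree in $\Ho(\Ningl)$. Combining well-definedness, realization, fullness and faithfulness yields the claimed equivalence of categories.
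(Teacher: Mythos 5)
There are two genuine gaps. The first and most serious is essential surjectivity, which is the technical heart of the theorem and which you only sketch. You propose to realize a given global transfer system $T$ by a ``Steiner-type or little-disks-type'' operad built from a family of orthogonal representations chosen so that exactly the transfers in $T$ appear. Even in the single-group case this strategy is known to be insufficient: linear isometries and little-disks operads on (incomplete) universes do not realize all $G$-transfer systems, which is precisely why the Blumberg--Hill realization conjecture required the separate arguments of Rubin, Gutiérrez--White and Bonventre--Pereira. Nothing in your sketch explains how the choice of representations would avoid producing unwanted admissible transfers, nor how the global closure condition (pullback along arbitrary continuous homomorphisms) would be controlled. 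The paper takes a different, and essentially formal, route: restrict $T$ to a $G$-transfer system $U_{O(n)}(T)$ for each orthogonal group $O(n)$, invoke the Gutiérrez--White realization result to get an equivariant $\Nin$-operad $\Op_{O(n)}$, apply the cofree functor $R_{O(n)}$ (right adjoint to the underlying-$O(n)$-space functor) and compute, via an adjunction between $I_{O(n)}$ and $\Igl$, that $R_{O(n)}(\Op_{O(n)})$ is a global $\Nin$-operad with transfer system $R_{O(n)}(U_{O(n)}(T))$; finally the product over all $n$ (using fibrancy to control infinite products) has transfer system $\bigcap_n R_{O(n)}(U_{O(n)}(T)) = T$, since every compact Lie group embeds in some $O(n)$. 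Without either this argument or a worked-out geometric construction, surjectivity is not established.

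The second gap is in your fullness and faithfulness argument, which presupposes a model structure on operads in orthogonal spaces with the equivalences of Definition~\ref{defiequivnin} as weak equivalences, together with cofibrant resolutions of operads and an obstruction theory for extending operad maps over cells. No such model structure is established in the paper or its cited results (see Remark~\ref{remmodelss}, which explicitly notes its existence is not needed anywhere), and the paper remarks after Proposition~\ref{prophoglposet} that the mapping-space-level statement analogous to Blumberg--Hill's Proposition~5.5 is expected but would require more technical machinery; moreover, extending over operadic cell attachments is not a routine consequence of contractibility of the fixed-point spaces of the terms $\Pop_n$. The paper instead avoids all of this with an elementary device: products of $\Nin$-operads are $\Nin$ with intersected transfer systems, so if $T_\Op \subset T_\Pop$ the projection $\Op \times \Pop \to \Op$ is an equivalence, and a purely categorical lemma (Lemma~\ref{lemmgeneralhoposet}) about localizations of categories with finite products then shows $\Ho(\Ningl)$ is thin and $\mathpzc{T}$ is fully faithful. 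You would either need to prove the model-categorical input you are assuming or replace that part of your argument by such a product trick.
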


This classification theorem is analogous to the classification of equivariant $\Nin$-operads for a particular group $G$ via $G$-transfer systems mentioned earlier, but neither is a generalization of the other. As we explain later, in our proof of this theorem we use the $G$-equivariant result.

A global transfer system is a partial order on the class of compact Lie groups that refines the "finite-index subgroup" relation and is closed under pullback along continuous homomorphisms of compact Lie groups. This last requirement is directly related to the previously mentioned strong compatibility along arbitrary homomorphisms that globally equivariant objects possess. This requirement is also stronger than what is required of a $G$-transfer system for a single group $G$, namely closure under conjugation and restriction to subgroups. This means that not all $G$-transfer systems appear as restrictions of global transfer systems, or equivalently, that not all homotopy types of $G$-equivariant $\Nin$-operads appear as restrictions of global $\Nin$-operads.

There is the greatest global transfer system, and the associated global $\Nin$-operads are the global $E_\infty$-operads of \cite{globaloperads}. In the unstable setting algebras over these operads are equivalent to the previously mentioned ultra-commutative monoids. These are highly structured, and represent the highest level of globally equivariant commutativity. Algebras over intermediate global $\Nin$-operads represent intermediate levels of commutativity between full commutativity and the "naive" globally equivariant commutativity displayed by non-equivariant $E_\infty$-algebras given a trivial globally equivariant structure.

The proof of our main result is conceptually very simple, although for some of the steps we need to carefully consider the technical details. Some of the steps are similar to those of the $G$-equivariant case, but in others we need new arguments. We begin by checking that the admissible transfers of a global $\Nin$-operad assemble into a global transfer system. Then we prove that a morphism of global $\Nin$-operads is an equivalence if and only if the source and target have the same associated global transfer systems. Using this we obtain that the functor $\mathpzc{T} \colon \Ho(\Ningl) \to \Igl$ is fully faithful.

The biggest hurdle is constructing a global $\Nin$-operad representing a given global transfer system $T$. Restricting $T$ to the subgroups of a fixed group $G$ yields a $G$-transfer system $T_G$. By the equivalence between $\Nin$-operads and $G$-transfer systems in the equivariant case we can construct an equivariant $\Nin$-operad $\Op_G$ with associated $G$-transfer system $T_G$. Then we prove that "cofreely" inducing the $G$-action on $\Op_G$ to a globally equivariant structure yields a global $\Nin$-operad with global transfer system generated by $T_G$. We do this construction for $G= O(n)$ the orthogonal group and we take the product of the resulting operads for all $n$. Since any compact Lie group embeds into an orthogonal group, the collection of $T_{O(n)}$ for all $n$ contains all of the information of $T$. We use this to prove that this product of operads is a global $\Nin$-operad with global transfer system $T$, showing that the functor $\mathpzc{T} \colon \Ho(\Ningl) \to \Igl$ is an equivalence.

Our theorem reduces the problem of classifying global $\Nin$-operads to classifying global transfer systems, which amounts to a purely representation-theoretic question. Explicitly computing the partially ordered set of global transfer systems is a much easier task than computing the partially ordered set of $G$-transfer systems, which has been done for very few groups. In particular, the latter has been done for cyclic groups of order $p^n$ \cite{balchin2021ninftyoperads}, for cyclic groups of order $pqr$ for three different primes $p, q$ and $r$ \cite{balchinCpqr}, and for $\Sigma_3$  and the quaternion group \cite{Rubin2}. In contrast, in \cite{abeliantransfers} we computed the partially ordered set of global transfer systems for the family of all abelian compact Lie groups.

\subsection*{Acknowledgements}

The work presented in this paper is part of my PhD project. I would like to thank my PhD supervisor Magdalena Kędziorek for suggesting this topic and for her guidance and various enlightening conversations. I would also like to thank Niall Taggart and Eva Höning for their very helpful comments on an earlier draft of this paper.

\section{Background on global homotopy theory}

Global homotopy theory studies \emph{globally equivariant} objects, i. e. objects which have compatible actions by all compact Lie groups. This section serves as a recollection of basic concepts from global homotopy theory that will be used in this paper. For details we refer the reader to \cite[Chapter 1]{global}.

\begin{defi}
Let $\Topcat$ be the category of compactly generated weak Hausdorff spaces. Let $\Lcat$ be the $\Topcat$-enriched category with objects the finite-dimensional real inner product spaces, and morphisms the linear isometric embeddings between them. An \emph{orthogonal space} is a $\Topcat$-enriched functor $\Lcat \to \Topcat$. We use $\Spc$ to denote the $\Topcat$-enriched category of orthogonal spaces.
\end{defi}

An orthogonal space is a globally equivariant object in the following sense. For each compact Lie group $G$ we fix a complete orthogonal $G$-universe $\Ug$. Given an orthogonal space $X$ and a finite-dimensional subrepresentation $V \subset \Ug$, the space $X(V)$ inherits a $G$-action from the $G$-action on $V$ and the functoriality of $X$. The \emph{underlying $G$-space} $X(\Ug)$ is obtained by taking a colimit of these $G$-spaces $X(V)$ indexed on the finite-dimensional subrepresentations $V \subset \Ug$. A morphism of orthogonal spaces is a \emph{global equivalence} if it induces $G$-weak equivalences between the \emph{homotopy} colimits of the $G$-spaces $X(V)$ for every compact Lie group $G$. Orthogonal spaces and global equivalences are a model for unstable global homotopy theory. The technical details of these definitions can be found in~\cite[Definitions 1.1.1~and~1.1.2]{global}. Note that whenever we consider a representation in this paper, we mean an orthogonal representation. 

The underlying $G$-spaces of an orthogonal space $X$ are compatible in the sense that for each continuous homomorphism $\alpha\colon G \to K$ of compact Lie groups, there is a natural $G$-equivariant map from $\alpha^\ast(X(\Uk))$ to $X(\Ug)$, which is a natural $G$-equivariant homeomorphism if $\alpha$ is injective and closed. As mentioned in the introduction, this compatibility is stronger than the compatibility between the restrictions of a $G$-action on a space to the various subgroups of $G$.

There is a Day convolution product on the category of orthogonal spaces, called the \emph{box product} and denoted by $\boxtimes$, which is part of a closed symmetric monoidal structure (see \cite[Section 1.3]{global}). The box product $X \boxtimes Y$ receives the universal bimorphism from $(X, Y)$. The commutative monoids with respect to this box product are called \emph{ultra-commutative monoids}, and they possess a rich structure, which includes transfer morphisms for all finite index closed subgroups of compact Lie groups. Ultra-commutative monoids are equivalent to algebras over \emph{global $E_\infty$-operads}, which were introduced in \cite{globaloperads}. This equivalence is a consequence of \cite[Theorem II]{globaloperads}. Global $E_\infty$-operads represent the highest level of globally equivariant commutativity, and our goal in this paper is to study all possible levels of commutativity in this setting.

\section{\texorpdfstring{$\Nin$}{N-infinity}-symmetric sequences in \texorpdfstring{$\Spc$}{Spc}}

Operads in a general symmetric monoidal category $\cat$ can be defined as monoids in the category of \emph{symmetric sequences} in $\cat$ with respect to the \emph{composition} monoidal structure (see \cite[Sections 1 and 2]{fresse} for a detailed treatment). Specializing to orthogonal spaces, a symmetric sequence in $\Spc$ is a sequence $\{X_n\}_{n \in \NN}$ of orthogonal spaces with a $\Sigma_n$-action on $X_n$ for each $n$. Let $\sobspc$ denote the category of symmetric sequences in $\Spc$, which is complete and cocomplete because $\Spc$ is. In this section we study a certain class of symmetric sequences in $\Spc$. A global $\Nin$-operad (which we define in Section~\ref{sectionoperadtransfer}) will just be an operad in orthogonal spaces whose underlying symmetric sequence is in this class. We begin with several basic definitions.

\begin{defi}
\label{defigraphsub}
Let $G$ and $H$ be compact Lie groups. A closed subgroup $\Gamma \leqslant G \times K$ is a \emph{graph subgroup} if $\Gamma \cap (\{e_G\} \times K) = \{(e_G, e_K)\}$. We denote the set of all graph subgroups of $G \times K$ by $\fat(G, K)$. They are called graph subgroups because for any $\Gamma \in \fat(G, K)$ there exists a unique closed subgroup $H \leqslant G$ and a unique continuous homomorphism $\phi \colon H \to K$ such that $\Gamma$ is precisely the graph of $\phi$. When we need to be explicit we write $\Gamma_\phi$ for $\Gamma$.
\end{defi}

Throughout this paper we will only consider closed subgroups, and so we omit the word \emph{closed}.

We say that an orthogonal space $X \in \Spc$ is \emph{non-empty} if there exists a $V \in \Lcat$ such that $X(V) \neq \emptyset$. Equivalently, $X$ is non-empty if its underlying space $X(\mathcal{U}_e) = X(\RR^\infty)$ is non-empty.

\begin{defi}
\label{defiadmissiblegroups}
Let $M \in \sobspc$ be a symmetric sequence in $\Spc$. For each compact Lie group $G$ and each $n \geqslant 0$ let $\FMGn$ be the set of those graph subgroups $\Gamma$ of $G \times \Sigma_n$ such that there exists a $G$-representation $V$ with $M_n(V)^\Gamma \neq \emptyset$. We call these graph subgroups in $\FMGn$ the \emph{admissible} graph subgroups for $M$.

Equivalently $\Gamma \in \FMGn$ if and only if $(M_n(\Ug))^\Gamma \neq \emptyset$. The sets $\FMGn$ are the sets of graph subgroups $\Gamma$ such that $M$ has non-empty $\Gamma$-fixed points. Note that if $G$ is a connected compact Lie group, there are no non-trivial continuous homomorphisms from $G$ to $\Sigma_n$, but there are graph subgroups of $G \times \Sigma_n$ given by non-trivial homomorphisms from subgroups of $G$ to $\Sigma_n$.
\end{defi}

The fact that the subgroups in $\FMGn$ are those with non-empty fixed points imposes additional structure on the sets $\FMGn$.

\begin{lemm}[Structure of the sets $\FMGn$]
\label{lemmFMGn}
Let $M \in \sobspc$ be a symmetric sequence, with associated sets $\FMGn$ of admissible graph subgroups. For each compact Lie group $G$ and each $n \geqslant 0$ the following holds:
\begin{enumerate}[label = \roman*)]
    \item \label{lemmFMGn1} Let $H \leqslant G$ be a subgroup, so that $\fat(H, \Sigma_n) \subset \fat(G, \Sigma_n)$. Then a graph subgroup $\Gamma \in \fat(H, \Sigma_n)$ is in $\FMHn$ if and only if it is in $\FMGn$.
    \item \label{lemmFMGn2} If $f \colon K \to H$ is a continuous homomorphism and $\Gamma_\phi \in \FMHn$, then $\Gamma_{\phi \circ f} \in \FMKn$.
    \item If $\Gamma \in \FMGn$ and $\Gamma' \leqslant \Gamma$ is a subgroup, $\Gamma' \in \FMGn$
    \item \label{lemmFMGn4}For $(g, \sigma) \in G \times \Sigma_n$, if $\Gamma \in \FMGn$, then $(g, \sigma) \Gamma (g, \sigma)^{-1} \in \FMGn$.
    \item \label{lemmFMGn5} If $M_n$ is non-empty and $\Gamma_\phi = H \times \{e\} \in \fat(H, \Sigma_n)$, i. e. $\phi$ is the trivial homomorphism, then $\Gamma_\phi \in \FMHn$.
\end{enumerate}
\end{lemm}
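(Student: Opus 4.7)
The plan is to prove each of the five items by exhibiting, for each admissibility condition, an explicit representation whose $\Gamma$-fixed-point space is non-empty. The universal tool is the following observation: for any equivariant linear isometric embedding $W \hookrightarrow V$ between representations of the relevant group, functoriality of $M_n$ yields an equivariant continuous map $M_n(W) \to M_n(V)$, which sends $\Gamma$-fixed points to $\Gamma$-fixed points.

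For item (i), the implication $\FMGn \Rightarrow \FMHn$ is essentially tautological: if $V$ is a $G$-representation with $M_n(V)^\Gamma \neq \emptyset$ and $\Gamma \in \fat(H, \Sigma_n)$, then the restriction of $V$ to $H$ leaves the underlying $(H \times \Sigma_n)$-space structure of $M_n(V)$ unchanged, so $\Gamma \in \FMHn$. For the converse, given an $H$-representation $W$ with a $\Gamma$-fixed point of $M_n(W)$, I would use the standard fact that any finite-dimensional $H$-representation embeds $H$-equivariantly and isometrically into the restriction of a finite-dimensional $G$-representation $V$ (for instance, pick an $H$-equivariant linear isometric embedding $W \hookrightarrow \Ug$, whose image lies in some finite-dimensional $G$-subrepresentation $V$ of $\Ug$). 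Applying $M_n$ to such an embedding $W \hookrightarrow V$ yields an $(H \times \Sigma_n)$-equivariant continuous map $M_n(W) \to M_n(V)$ that sends the given $\Gamma$-fixed point of $M_n(W)$ to a $\Gamma$-fixed point of $M_n(V)$.

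For item (ii), given a continuous homomorphism $f \colon K \to H$ and an $H$-representation $V$ witnessing $\Gamma_\phi \in \FMHn$, I would use the pullback representation $f^\ast V$, which has the same underlying inner product space but with $K$-action defined by $k \cdot v \defeq f(k) \cdot v$. A direct unwinding of definitions shows that the action of any $(k, \sigma) \in \Gamma_{\phi \circ f}$ on $M_n(f^\ast V)$ coincides with the action of $(f(k), \sigma) \in \Gamma_\phi$ on $M_n(V)$, so $M_n(V)^{\Gamma_\phi} \subseteq M_n(f^\ast V)^{\Gamma_{\phi \circ f}}$ and the latter is non-empty. Items (iii) and (iv) are immediate from general principles: $M_n(V)^\Gamma \subseteq M_n(V)^{\Gamma'}$ for any $\Gamma' \leqslant \Gamma$, and the self-homeomorphism of $M_n(V)$ induced by $(g, \sigma) \in G \times \Sigma_n$ carries $\Gamma$-fixed points bijectively onto $(g, \sigma) \Gamma (g, \sigma)^{-1}$-fixed points.

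For item (v), since $M_n$ is non-empty there exists some $W \in \Lcat$ with $M_n(W) \neq \emptyset$. Regarding $W$ as an $H$-representation with the trivial $H$-action, the induced $H$-action on $M_n(W)$ is also trivial, whence $M_n(W)^{H \times \{e\}} = M_n(W) \neq \emptyset$, giving $\Gamma_\phi \in \FMHn$. The main technical point to check is the embedding in (i) of an arbitrary $H$-representation into the restriction of a $G$-representation, which is a standard consequence of the representation theory of compact Lie groups; the remaining items reduce to checking functoriality and equivariance.
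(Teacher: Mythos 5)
Your proposal is correct and follows essentially the same route as the paper: restriction of representations for one direction of (i), an $H$-equivariant embedding of the given $H$-representation into (the restriction of) a $G$-representation for the converse, the pullback representation $f^\ast V$ for (ii), and the obvious fixed-point inclusions, translation by $(g,\sigma)$, and trivial action for (iii)--(v). The only cosmetic difference is that you obtain the embedding in (i) via the complete universe $\Ug$, whereas the paper cites the same underlying representation-theoretic fact (tom Dieck, III Theorem 4.5) directly.
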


\begin{proof}
\begin{enumerate}[label = \roman*)]
    \item Assume that $\Gamma \in \FMGn$. Then by definition there exists a $G$-representation $V$ such that $M_n(V)^\Gamma \neq \emptyset$. Consider the $H$-representation $\res^G_H V$, then $M_n(\res^G_H V)^\Gamma = M_n(V)^\Gamma \neq \emptyset$, so $\Gamma \in \FMHn$.
    
    Now assume instead that $\Gamma \in \FMHn$. Again by definition there is an $H$-representation $V$ such that $M_n(V)^\Gamma \neq \emptyset$. By \cite[III Theorem 4.5]{diecklie} there is a $G$-representation $W$ and an $H$-equivariant embedding $\psi \colon V \to \res^G_H W$. Then $M_n(\psi) \colon M_n(V) \to M_n(\res^G_H W)$ is $(H \times \Sigma_n)$-equivariant, and so $M_n(\res^G_H W)^\Gamma = M_n(W)^\Gamma \neq \emptyset$ and $\Gamma \in \FMGn$.
    \item If $\Gamma_\phi \in \FMHn$, then there is an $H$-representation $V$ such that $M_n(V)^{\Gamma_\phi} \neq \emptyset$. Consider the $K$-representation $f^\ast V$, then $M_n(f^\ast V)^{\Gamma_{\phi \circ f}} = M_n(V)^{\Gamma_{\phi|_{\Ima(f)}}} \supset M_n(V)^{\Gamma_\phi} \neq \emptyset$, so $\Gamma_{\phi \circ f} \in \FMKn$.
    \item If $\Gamma \in \FMGn$ then there is a $G$-representation $V$ such that $M_n(V)^\Gamma \neq \emptyset$, and $M_n(V)^\Gamma \subset M_n(V)^{\Gamma'}$.
    \item If $\Gamma \in \FMGn$, then there is an $H$-representation $V$ with some  $x \in M_n(V)^\Gamma$. Then $(g, \sigma) x \in M_n(V)^{(g, \sigma) \Gamma (g, \sigma)^{-1}}$ and so $(g, \sigma) \Gamma (g, \sigma)^{-1} \in \FMGn$.
    \item Since $M_n$ is non-empty, there is some $V \in \Lcat$ with $M_n(V) \neq \emptyset$. Consider the trivial $H$-action on $V$, then $M_n(V)^\Gamma \neq \emptyset$, and so $\Gamma \in \FMHn$. \qedhere
\end{enumerate}
\end{proof}

\begin{coro}
\label{corofamily}
For each symmetric sequence $M \in \sobspc$, each compact Lie group $G$, and each $n \geqslant 0$, if $M_n$ is non-empty $\FMGn$ is a family of subgroups of $G \times \Sigma_n$ contained in $\fat(G, \Sigma_n)$, closed under taking subgroups and conjugation, and which contains all subgroups of the form $H \times \{e\}$ for $H \leqslant G$.
\end{coro}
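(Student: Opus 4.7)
The proof is essentially a direct repackaging of the lemma that was just established, so the plan is very short. First I would observe that containment $\FMGn \subset \fat(G, \Sigma_n)$ is immediate from Definition~\ref{defiadmissiblegroups}, since $\FMGn$ was defined as a set of graph subgroups. The three remaining properties -- closure under passing to subgroups, closure under conjugation by elements of $G \times \Sigma_n$, and containment of every subgroup of the form $H \times \{e\}$ with $H \leqslant G$ when $M_n$ is non-empty -- are precisely parts \ref{lemmFMGn4}, \ref{lemmFMGn2} (applied in the relevant form; closure under subgroups is really part iii) and \ref{lemmFMGn5} of Lemma~\ref{lemmFMGn}, so the corollary follows by assembling these statements.

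More precisely, I would structure the proof as three one-line verifications: closure under subgroups is Lemma~\ref{lemmFMGn}(iii); closure under conjugation by $(g, \sigma) \in G \times \Sigma_n$ is Lemma~\ref{lemmFMGn}\ref{lemmFMGn4}; and the fact that $H \times \{e\} \in \FMHn$ for every $H \leqslant G$ when $M_n$ is non-empty is Lemma~\ref{lemmFMGn}\ref{lemmFMGn5}, combined with Lemma~\ref{lemmFMGn}\ref{lemmFMGn1} to conclude that $H \times \{e\} \in \FMGn$ as well (viewing $H \times \{e\}$ as a graph subgroup of $H \times \Sigma_n$ first and then of $G \times \Sigma_n$).

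There is no serious obstacle here; this is a bookkeeping step whose purpose is to record the property relevant to the classification, namely that $\FMGn$ is a \emph{family} of graph subgroups containing the trivial-homomorphism graphs. The only minor subtlety is being explicit that a graph subgroup $H \times \{e\} \leqslant G \times \Sigma_n$ is indeed a graph subgroup of $G \times \Sigma_n$ (it is the graph of the trivial homomorphism $H \to \Sigma_n$), so that it makes sense to say that it lies in $\FMGn$ rather than merely in $\FMHn$; this is handled by appealing to part~\ref{lemmFMGn1} of the lemma.
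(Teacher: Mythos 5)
Your proposal is correct and matches the paper's (implicit) argument: the corollary is stated without proof precisely because it is the assembly of Lemma~\ref{lemmFMGn} parts (iii), \ref{lemmFMGn4}, and \ref{lemmFMGn5} together with the definition of $\FMGn$ as a set of graph subgroups, and your use of part~\ref{lemmFMGn1} to pass from $\FMHn$ to $\FMGn$ for the trivial-homomorphism graphs $H \times \{e\}$ is exactly the right way to handle that step.
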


\begin{defi}
\label{defiequivnin}
An \emph{equivalence of symmetric sequences in $\Spc$} is a morphism $f \colon M \to N$ such that $f_n$ is a $\Sigma_n$-global equivalence for each $n$ (as defined in \cite[Definition 3.2]{globaloperads}). Explicitly, $f_n$ is a $\Sigma_n$-global equivalence if for each compact Lie group $G$, each graph subgroup $\Gamma \in \fat(G, \Sigma_n)$, each $G$-representation $V$ and $l \geqslant 0$, the following statement holds. For any pair of maps $(\alpha, \beta)$ as in

\[\begin{tikzcd}
\partial D^l \arrow[r, "\alpha"] \arrow[d, "i_l", hook] & M_n(V)^\Gamma \arrow[d, "f_n(V)^\Gamma"] \\
D^l \arrow[r, "\beta"]                                    & N_n(V)^\Gamma                       
\end{tikzcd}\]
there is a $G$-embedding $\psi\colon V \to W$ into a $G$-representation $W$ and a map $\lambda \colon D^l \to M_n(W)^\Gamma$ which satisfies that in the following diagram
\begin{equation}
\label{eqdefiequi}
\begin{tikzcd}
\partial D^l \arrow[r, "\alpha"] \arrow[d, "i_l", hook] & M_n(V)^\Gamma \arrow[r, "M_n(\psi)^\Gamma"] & M_n(W)^\Gamma \arrow[d, "f_n(W)^\Gamma"] \\
D^l \arrow[rru, "\lambda", dashed] \arrow[r, "\beta"']     & N_n(V)^\Gamma \arrow[r, "N_n(\psi)^\Gamma"] & N_n(W)^\Gamma 
\end{tikzcd}
\end{equation}
the upper left triangle commutes, and the lower right triangle commutes up to homotopy relative to $\partial D^l$.
\end{defi}

\begin{rem}
\label{remmodelss}
Let $\SnSpc$ be the category of $\Sigma_n$-objects in $\Spc$. In \cite[Appendix A]{globaloperads} we constructed a model structure on $\SnSpc$ (called the $\Sigma_n$-global model structure) for each $n \in \NN$. These model structures together give a model structure on the category $\sobspc$ of symmetric sequences in $\Spc$. The weak equivalences of this model structure are precisely the equivalences of symmetric sequences in $\Spc$ that we just defined. The reason why this definition of equivalence of symmetric sequences in $\Spc$ is the one we are interested in is because by \cite[Theorem II]{globaloperads} a morphism of operads in $(\Spc, \boxtimes)$ induces a Quillen equivalence between the respective categories of algebras if and only if it is an equivalence of symmetric sequences in $\Spc$ in the above sense. Consequently, a good model structure on operads in $\Spc$ will have these equivalences as weak equivalences (see also \cite[Remark 4.16]{globaloperads}). The existence of such a model structure is not needed anywhere in the present paper.
\end{rem}

\begin{rem}
\label{remmodelssfib}
The fibrations of this model structure on $\sobspc$ are the morphisms $f \colon M \to N$ such that each $f_n$ is a $\Sigma_n$-global fibration in the sense of \cite[Definition A.13]{globaloperads}. Therefore a symmetric sequence $M$ in $\Spc$ is fibrant if and only if for each $n$, each compact Lie group $K$, each embedding of faithful $K$-representations $\psi \colon V \to W$, and each graph subgroup $\Gamma \in \fat(K, \Sigma_n)$, the map $M_n(\psi)^\Gamma \colon M_n(V)^\Gamma \to M_n(W)^\Gamma$ is a weak homotopy equivalence.
\end{rem}

\begin{lemm}
\label{lemmcontain}
Let $f \colon M \to N$ be a morphism of symmetric sequences in $\Spc$. Then for each compact Lie group $G$ and each $n \geqslant 0$, $\FMGn \subset \FNGn$.
\end{lemm}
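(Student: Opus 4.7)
The plan is to unwind the definition of $\FF^M_{G,n}$ and observe that a morphism of symmetric sequences automatically sends fixed points to fixed points, so non-empty fixed-point sets are preserved. Concretely, suppose $\Gamma \in \FMGn$. By Definition~\ref{defiadmissiblegroups} this means there exists a $G$-representation $V$ for which $M_n(V)^\Gamma \neq \emptyset$; pick any point $x \in M_n(V)^\Gamma$.

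The map $f_n \colon M_n \to N_n$ is a morphism of $\Sigma_n$-objects in $\Spc$, so $f_n(V) \colon M_n(V) \to N_n(V)$ is $\Sigma_n$-equivariant. The $G$-action on both sides comes from applying the $\Topcat$-enriched functor $M_n$ (respectively $N_n$) to the $G$-action on $V$, and $f_n$ is a natural transformation, so $f_n(V)$ is also $G$-equivariant. Hence $f_n(V)$ is $(G \times \Sigma_n)$-equivariant, and in particular $\Gamma$-equivariant.

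Therefore $f_n(V)(x) \in N_n(V)^\Gamma$, showing that $N_n(V)^\Gamma \neq \emptyset$, so $\Gamma \in \FNGn$. There is no real obstacle here; the statement is essentially a formal consequence of the fact that equivariant maps restrict to maps between fixed-point sets, combined with the characterization of admissible graph subgroups as those with non-empty fixed points on some representation.
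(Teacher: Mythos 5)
Your proof is correct and matches the paper's argument exactly: both take a non-empty fixed-point set $M_n(V)^\Gamma$ and push it forward along the $(G \times \Sigma_n)$-equivariant map $f_n(V)$ to conclude $N_n(V)^\Gamma \neq \emptyset$. The extra sentences you include justifying the $(G \times \Sigma_n)$-equivariance via naturality are fine but not a different route.
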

\begin{proof}
If $\Gamma \in \FMGn$, by definition there exists some $G$-representation $V$ with $M_n(V)^\Gamma \neq \emptyset$. Since $f_n(V)$ is $(G \times \Sigma_n)$-equivariant, $N_n(V)^\Gamma \neq \emptyset$ and so $\Gamma \in \FNGn$.
\end{proof}

The sets $\FMGn$ carry the information of which fixed points of a symmetric sequence in $\Spc$ are non-empty. We give now the definition of an $\Nin$-symmetric sequence in $\Spc$, which roughly requires the non-empty fixed points to be weakly contractible.

\begin{defi}[$\Nin$-symmetric sequences in $\Spc$]
\label{defiNin}
We say that $M \in \sobspc$ is an \emph{$\Nin$-symmetric sequence in $\Spc$} if for each $n \geqslant 0$ the orthogonal space $M_n$ is non-empty, and for each compact Lie group $G$ the set $\FMGn$ satisfies the following property. If $\Gamma \in \FMGn$, for any $G$-representation $V$, any $l \geqslant 0$, and any map $\alpha \colon \partial D^l \to M_n(V)^\Gamma$ there is a $G$-embedding $\psi \colon V \to W$ into a $G$-representation $W$ and a map $\lambda \colon D^l \to M_n(W)^\Gamma$ such that the following diagram commutes
\[\begin{tikzcd}
	{\partial D^l} & {M_n(V)^\Gamma} && {M_n(W)^\Gamma} \\
	{D^l.}
	\arrow["\alpha", from=1-1, to=1-2]
	\arrow["{i_l}"', from=1-1, to=2-1]
	\arrow["{M_n(\psi)^\Gamma}", from=1-2, to=1-4]
	\arrow["\lambda"', dashed, from=2-1, to=1-4]
\end{tikzcd}\]
\end{defi}

\begin{rem}
\label{remNinunderlying}
If each $M_n$ in $M \in \sobspc$ is $\Sigma_n$-free and a \emph{closed} orthogonal space (for each linear isometric embedding $\psi$ the map $M_n(\psi)$ is a closed embedding) then for each compact Lie group $G$ the underlying $G$-space $M_n(\Ug)$ is a universal space for the family $\FMGn$. Thus under these conditions the underlying $G$-symmetric sequence of an $\Nin$-symmetric sequence in $\Spc$ is $\Nin$ is the sense of Blumberg and Hill \cite[Definition 4.1]{BLUMBERG2015658} (for $G$ a compact Lie group see \cite{GutierrezWhite} instead).
\end{rem}

\begin{rem}
Note that in \cite{BLUMBERG2015658}, the conditions that are imposed on the families of subgroups associated to an $\Nin$-symmetric sequence for a fixed group $G$ only require it to be closed under precomposition by subgroup inclusions and isomorphisms given by conjugation by some $g \in G$. In the case of $\Nin$-symmetric sequences in $\Spc$, the families $\FMGn$ also have to be closed under precomposition by arbitrary homomorphisms, by Lemma~\ref{lemmFMGn}~(\ref{lemmFMGn2}).
\end{rem}

\begin{prop}
\label{propequininss}
Given two $\Nin$-symmetric sequences $M$ and $N$ in $\Spc$, a morphism between them $f \colon M \to N$ is an equivalence in $\sobspc$ if and only if $\FMGn= \FNGn$ for each compact Lie group $G$ and each $n \geqslant 0$.
\end{prop}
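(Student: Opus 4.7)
The plan has two directions to prove, connected by the containment $\FMGn \subset \FNGn$ already supplied by Lemma~\ref{lemmcontain}.

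For the forward direction (equivalence implies equality of families), I would show the reverse inclusion $\FNGn \subset \FMGn$ using the case $l=0$ of Definition~\ref{defiequivnin}. Given $\Gamma \in \FNGn$, pick a $G$-representation $V$ and a point $\beta \in N_n(V)^\Gamma$, viewed as $\beta \colon D^0 \to N_n(V)^\Gamma$; the map $\alpha$ from $\partial D^0 = \emptyset$ is vacuous, so the equivalence condition produces $\psi \colon V \to W$ and $\lambda \colon D^0 \to M_n(W)^\Gamma$, witnessing that $M_n(W)^\Gamma \neq \emptyset$ and therefore $\Gamma \in \FMGn$.

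For the backward direction, the plan is to solve the lifting problem of Definition~\ref{defiequivnin} by combining the $\Nin$ property of $M$ (to produce $\lambda$) with the $\Nin$ property of $N$ (to produce the required homotopy). Given the square in~(\ref{eqdefiequi}), I first argue that $\Gamma \in \FMGn$: when $l \geqslant 1$ the map $\alpha$ directly witnesses $\FMGn$-membership, and when $l = 0$ we instead use $\beta$ to get $\Gamma \in \FNGn = \FMGn$. In the case $l \geqslant 1$, applying the $\Nin$ property of $M$ to $\alpha$ yields $\psi_0 \colon V \to W_0$ and $\lambda_0 \colon D^l \to M_n(W_0)^\Gamma$ extending $M_n(\psi_0)^\Gamma \circ \alpha$. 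Then $f_n(W_0)^\Gamma \circ \lambda_0$ and $N_n(\psi_0)^\Gamma \circ \beta$ agree on $\partial D^l$ by the commutativity of~(\ref{eqdefiequi}) together with naturality of $f$, and glue to a map $\gamma \colon S^l \to N_n(W_0)^\Gamma$. Applying the $\Nin$ property of $N$ in degree $l+1$ to $\gamma$ produces $\psi_1 \colon W_0 \to W$ and an extension $D^{l+1} \to N_n(W)^\Gamma$, i.e.\ a null-homotopy of $N_n(\psi_1)^\Gamma \circ \gamma$. Setting $\psi \defeq \psi_1 \circ \psi_0$ and $\lambda \defeq M_n(\psi_1)^\Gamma \circ \lambda_0$, the upper triangle of~(\ref{eqdefiequi}) commutes strictly, and the null-homotopy unpacks into the required homotopy rel $\partial D^l$ between $f_n(W)^\Gamma \circ \lambda$ and $N_n(\psi)^\Gamma \circ \beta$.

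The case $l = 0$ follows the same two-step pattern but degenerately: the $\Nin$ property of $M$ only provides a point $x \in M_n(W_0)^\Gamma$ for some $\psi_0 \colon V \to W_0$, and the $\Nin$ property of $N$ in degree $1$ applied to the map $S^0 \to N_n(W_0)^\Gamma$ picking out $f_n(W_0)^\Gamma(x)$ and $N_n(\psi_0)^\Gamma(\beta)$ produces a path in $N_n(W)^\Gamma$ between $f_n(W)^\Gamma(M_n(\psi_1)^\Gamma(x))$ and $N_n(\psi)^\Gamma(\beta)$ after a further enlargement $\psi_1 \colon W_0 \to W$.

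I expect the main technical obstacle to be a clean translation between null-homotopies of a glued map $S^l \to X$ and homotopies rel $\partial D^l$ of the two original maps $D^l \to X$; this is the standard equivalence via viewing $D^{l+1}$ as $D^l \times I$ collapsed appropriately, but it must be executed carefully so that the final homotopy has the correct behaviour on $\partial D^l$, namely that it is the constant homotopy at $f_n(W)^\Gamma \circ M_n(\psi)^\Gamma \circ \alpha = N_n(\psi)^\Gamma \circ \beta \circ i_l$. Everything else is bookkeeping with naturality of $f$ and composition of $G$-embeddings of representations.
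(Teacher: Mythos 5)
Your proposal is correct and follows essentially the same route as the paper: points (the $l=0$ case) are lifted along $f$ to show $\FNGn \subset \FMGn$, and for the converse the $\Nin$ property of $M$ supplies $\lambda$ while the $\Nin$ property of $N$ one dimension higher supplies the homotopy. The only cosmetic difference is that the paper defines the glued map directly on $D^l \times \{0\} \cup D^l \times \{1\} \cup \partial D^l \times [0, 1]$ (with the constant homotopy on the side) and extends it over $D^l \times [0, 1]$, so the extension is automatically a homotopy relative to $\partial D^l$, sidestepping the null-homotopy-versus-rel-boundary translation you flag as the main technical point.
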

\begin{proof}
Fix some $n \geqslant 0$. Assume that $\FMGn= \FNGn$. If $\Gamma \in \fat(G, \Sigma_n)$ but $\Gamma \notin \FMGn= \FNGn$ then there exist no maps $\alpha$ and $\beta$ as in Definition~\ref{defiequivnin}, as their targets are empty, so there is nothing to check. If $\Gamma \in \FMGn= \FNGn$ then since $\Gamma \in \FMGn$ and $M$ is assumed to be $\Nin$, there exist $\psi \colon V \to W$ and $\lambda$ which make the upper left triangle of Diagram~\ref{eqdefiequi} of Definition~\ref{defiequivnin} commute. Define a map $F \colon D^l \times \{0\} \cup D^l \times \{1\} \cup \partial D^l \times [0, 1] \to N_n(W)^\Gamma$ using $f_n(W)^\Gamma \circ \lambda$, $N_n(\psi)^\Gamma \circ \beta$, and the fact that they coincide on $\partial D^l$. Then since $\Gamma \in \FNGn$ and $N$ is assumed to be $\Nin$, we can extend $F$ to a map from $D^l \times [0, 1]$, which is a homotopy from $f_n(W)^\Gamma \circ \lambda$ to $N_n(\psi)^\Gamma \circ \beta$, after possibly enlarging $W$ via some $\psi' \colon W \to W'$. Thus $f_n$ is a $\Sigma_n$-global equivalence.

Assume now that $f$ is an equivalence. For each compact Lie group $G$ and each $n \geqslant 0$, $\FMGn \subset \FNGn$ by Lemma~\ref{lemmcontain}. If $\Gamma \in \FNGn$ then there is a $G$-representation $W$ with $N_n(W)^\Gamma \neq \emptyset$. Since $f_n$ is a $\Sigma_n$-global equivalence a point in $N_n(W)^\Gamma$ (i. e. a map $\ast=D^0 \to N_n(W)^\Gamma$) can be lifted to $M_n(W')^\Gamma$ for some representation $W'$ containing $W$. Therefore $\FMGn= \FNGn$.
\end{proof}

\begin{prop}
\label{propproductsobspc}
Given two $\Nin$-symmetric sequences in $\Spc$ $M$ and $N$, their categorical product $M \times N$ is an $\Nin$-symmetric sequence with admissible graph subgroups $\FMGn \cap \FNGn$.
\end{prop}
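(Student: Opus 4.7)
The plan is to exploit the fact that both the categorical product and $\Gamma$-fixed points are computed pointwise, reducing each requirement to separately invoking the $\Nin$ property of $M$ and $N$ while carefully enlarging representations.

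First I would identify the product explicitly. Since limits in $\sobspc$ are computed levelwise, and limits in $\Spc$ are computed objectwise in $\Lcat$, we have $(M \times N)_n(V) = M_n(V) \times N_n(V)$ with the diagonal $(G \times \Sigma_n)$-action, and therefore $((M \times N)_n(V))^\Gamma = M_n(V)^\Gamma \times N_n(V)^\Gamma$ for every $\Gamma \in \fat(G, \Sigma_n)$. This immediately makes $(M \times N)_n$ non-empty whenever both $M_n$ and $N_n$ are: pick $V_M$ and $V_N$ with non-empty values, then use the embeddings $V_M, V_N \hookrightarrow V_M \oplus V_N$ and the functoriality of $M_n$ and $N_n$ to get a non-empty value on $V_M \oplus V_N$.

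Next I would identify $\FMNGn$. The inclusion $\FMNGn \subset \FMGn \cap \FNGn$ is obvious from the product description of fixed points and the projections $M \times N \to M$ and $M \times N \to N$ together with Lemma~\ref{lemmcontain}. For the reverse inclusion, if $\Gamma \in \FMGn \cap \FNGn$ choose $G$-representations $V_M$ and $V_N$ with $M_n(V_M)^\Gamma$ and $N_n(V_N)^\Gamma$ both non-empty, and then $M_n(V_M \oplus V_N)^\Gamma \times N_n(V_M \oplus V_N)^\Gamma$ is non-empty, so $\Gamma \in \FMNGn$.

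Finally, to check the $\Nin$ condition at some $\Gamma \in \FMNGn = \FMGn \cap \FNGn$, write a map $\alpha \colon \partial D^l \to (M \times N)_n(V)^\Gamma$ as a pair $(\alpha_M, \alpha_N)$. I would apply the $\Nin$ property of $M$ to $\alpha_M$, producing a $G$-embedding $\psi_M \colon V \to W'$ and a filler $\lambda_M \colon D^l \to M_n(W')^\Gamma$ with $\lambda_M \circ i_l = M_n(\psi_M)^\Gamma \circ \alpha_M$. Then I would apply the $\Nin$ property of $N$ to the map $N_n(\psi_M)^\Gamma \circ \alpha_N \colon \partial D^l \to N_n(W')^\Gamma$, yielding $\psi_N \colon W' \to W$ and $\lambda_N \colon D^l \to N_n(W)^\Gamma$ with $\lambda_N \circ i_l = N_n(\psi_N \circ \psi_M)^\Gamma \circ \alpha_N$. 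Setting $\psi = \psi_N \circ \psi_M$ and $\lambda = (M_n(\psi_N)^\Gamma \circ \lambda_M, \lambda_N)$ gives the required filler for $\alpha$ into $(M \times N)_n(W)^\Gamma$.

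The only subtlety, and hence the main step to handle carefully, is this sequential enlargement of representations: one cannot apply both $\Nin$ conditions in parallel and then merge their targets, because the $\Nin$ axiom needs a single ambient $W$ through which the original $V$ embeds. Using the first $\Nin$ fill to produce the base for the second application (so that the second embedding post-composes with the first) resolves this, and all commutativity checks on $\partial D^l$ then follow from functoriality of $M_n$ and $N_n$ applied to $\psi_N$.
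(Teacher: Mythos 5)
Your proof is correct and follows essentially the same route as the paper: identify the product and its fixed points objectwise to get $\FMNGn = \FMGn \cap \FNGn$, then fill the two components sequentially, using the embedding produced for $M$ as the starting point for the application of the $\Nin$ property of $N$, and combining the fillers as $(M_n(\psi_N)^\Gamma \circ \lambda_M, \lambda_N)$. The paper's proof is exactly this argument (with notation $\psi_0, \psi_1, \lambda_0, \lambda_1$), so there is nothing to add.
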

\begin{proof}
Note that $\sobspc$ is a functor category, so limits are computed objectwise, and $(M \times N)_n = M_n \times N_n$. Since a product is non-empty if and only if each term is, and fixed points commute with products, $\FMNGn=\FMGn \cap \FNGn$.

If $\Gamma \in \FMGn \cap \FNGn$, consider any $G$-representation $V$, $l \geqslant 0$, and any map $(\alpha_0, \alpha_1) \colon \partial D^l \to (M \times N)_n(V)^\Gamma = M_n(V)^\Gamma \times N_n(V)^\Gamma$. Since $\Gamma \in \FMGn$, there is an embedding $\psi_0 \colon V \to W_0$ and a map $\lambda_0 \colon D^l \to M_n(W_0)^\Gamma$ such that $\lambda_0 \circ i_l = M_n(\psi_0)^\Gamma \circ \alpha_0$. And since $\Gamma \in \FNGn$, we can similarly find $\psi \colon W_0 \to W_1$ and $\lambda_1 \colon D^l \to N_n(W_1)^\Gamma$ such that $\lambda_1 \circ i_l = N_n(\psi_1)^\Gamma \circ N_n(\psi_0)^\Gamma \circ \alpha_1$. Then $(M_n(\psi_1)^\Gamma \circ \lambda_0,\lambda_1)$ is the map verifying that $M \times N$ is an $\Nin$-symmetric sequence. 
\end{proof}

Proposition~\ref{propproductsobspc} also implies that finite products of $\Nin$-symmetric sequences in $\Spc$ are $\Nin$, with sets of admissible graph subgroups given by the intersection of the respective sets of admissible graph subgroups of each factor. Later in Section~\ref{sectionexamples} we will construct examples of global $\Nin$-operads, and one important step will require taking arbitrary products of global $\Nin$-operads. However the statement of the previous proposition is not true for arbitrary products, since arbitrary products of global equivalences of orthogonal spaces are not necessarily global equivalences (see the comment before \cite[Theorem 1.1.10]{global}). We will prove that arbitrary products of \emph{fibrant} $\Nin$-symmetric sequences in $\Spc$ are $\Nin$, using the following result.

\begin{lemm}
\label{lemmfibrantss}
Let $M \in \sobspc$ be fibrant, in the sense of Remark~\ref{remmodelssfib}. Then $M$ is $\Nin$ if and only if for each compact Lie group $G$, each $n \geqslant 0$, each $\Gamma \in \FMGn$ and each faithful $G$-representation $V$ the space $M_n(V)^\Gamma$ is weakly contractible.
\end{lemm}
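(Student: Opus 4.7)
My plan is to handle the two implications separately, reducing both to the observation that the fibrancy hypothesis lets us compare $\Gamma$-fixed points of different faithful $G$-representations via weak equivalences, while the $\Nin$ condition lets us enlarge an arbitrary $G$-representation to a faithful one by adjoining a faithful summand (for instance the regular representation). Two elementary facts underpin the argument: first, if $V$ is a faithful $G$-representation and $V \hookrightarrow W$ is a $G$-equivariant embedding, then $W$ is also faithful; second, for any $G$-representation $V$ we can embed it into a faithful $G$-representation $W = V \oplus U$ by taking $U$ faithful.

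For the implication $(\Leftarrow)$, given $\Gamma \in \FMGn$, an arbitrary $G$-representation $V$, and a map $\alpha \colon \partial D^l \to M_n(V)^\Gamma$, I take $\psi \colon V \hookrightarrow W = V \oplus U$ for $U$ a faithful $G$-representation. Since $W$ is then faithful, $M_n(W)^\Gamma$ is weakly contractible by hypothesis, and the composite $M_n(\psi)^\Gamma \circ \alpha$ extends over $D^l$, providing the map $\lambda$ required by the $\Nin$ condition.

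For the implication $(\Rightarrow)$, assume $M$ is $\Nin$ and fibrant, fix $\Gamma \in \FMGn$ and a faithful $G$-representation $V$, and show that $M_n(V)^\Gamma$ is non-empty with trivial $\pi_k$ at every basepoint for $k \geqslant 0$. For any map $\alpha \colon \partial D^{k+1} \to M_n(V)^\Gamma$ -- with $k = -1$ and $\partial D^0 = \emptyset$ handling non-emptiness -- the $\Nin$ condition supplies $\psi \colon V \hookrightarrow W$ and an extension $\lambda \colon D^{k+1} \to M_n(W)^\Gamma$ of $M_n(\psi)^\Gamma \circ \alpha$. Since $V$ is faithful, so is $W$, and Remark~\ref{remmodelssfib} then implies $M_n(\psi)^\Gamma$ is a weak homotopy equivalence. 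The existence of $\lambda$ shows the image of $\alpha$ is nullhomotopic in $M_n(W)^\Gamma$, and this triviality transports back to $M_n(V)^\Gamma$ along the weak equivalence on $\pi_k$.

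The one technical point to handle carefully is the distinction between free and based nullhomotopies when treating $\pi_k$ for $k \geqslant 1$. This is settled by first applying the cases $k = -1$ and $k = 0$ to obtain non-emptiness and path-connectedness of $M_n(V)^\Gamma$, and observing that the same reasoning applied to $W$ (which is faithful and for which $\Gamma$ still lies in $\FF^M_{G,n}$ by Lemma~\ref{lemmcontain} applied to $\id_M$, or directly since non-empty fixed points persist under $M_n(\psi)^\Gamma$) gives path-connectedness of $M_n(W)^\Gamma$ as well; once the target is path-connected a free nullhomotopy upgrades to a based one and the weak-equivalence transport goes through. Beyond this basepoint bookkeeping I do not anticipate any serious obstacle.
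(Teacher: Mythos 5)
Your proof is correct and takes essentially the same approach as the paper: the backward implication enlarges $V$ to a faithful representation $V \oplus U$ and uses the contractibility hypothesis, while the forward implication combines the $\Nin$ lifting with the fact that fibrancy makes $M_n(\psi)^\Gamma$ a weak homotopy equivalence for an embedding of faithful representations, and then transports the nullhomotopy back. The extra bookkeeping you do on free versus based nullhomotopies is a detail the paper's proof leaves implicit.
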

\begin{proof}
Assume that $M$ is $\Nin$. If $\Gamma \in \FMGn$ and $V$ is a faithful $G$-representation, let $\alpha$ denote a map $\partial D^l \to M_n(V)^\Gamma$. By the definition of $\Nin$ there is a map $\lambda$ that makes the triangular diagram in Definition~\ref{defiNin} commute after applying some $G$-embedding $\psi \colon V \to W$. Now since $M$ is fibrant, $M_n(\psi)^\Gamma$ is a weak homotopy equivalence, and $\lambda$ witnesses that $M_n(\psi)^\Gamma \circ \alpha$ is nullhomotopic. Therefore $\alpha$ itself is nullhomotopic, and $M_n(V)^\Gamma$ is weakly contractible.

We now assume that each $M_n(W)^\Gamma$ is weakly contractible for all faithful $G$-representations $W$. Then we can always find the lift $\lambda$ of Definition~\ref{defiNin} by taking $\psi$ to be an embedding into a faithful $G$-representation $W$.
\end{proof}

\begin{prop}
\label{proparbiproductsobspc}
Let $\{M^j\}_{j \in J}$ be a set of fibrant $\Nin$-symmetric sequences, then their categorical product $\prod_{j \in J} M^j$ is a fibrant $\Nin$-symmetric sequence with admissible graph subgroups $\bigcap_{j \in J} \FF^{M^j}_{G, n}$.
\end{prop}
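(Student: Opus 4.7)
The plan is to exploit the characterisation of fibrant $\Nin$-symmetric sequences from Lemma~\ref{lemmfibrantss}, together with the observation that products in $\sobspc$ and $\Gamma$-fixed points are computed levelwise, so that for every $V \in \Lcat$ and every subgroup $\Gamma$ of $G \times \Sigma_n$ one has
\[
    \Bigl(\prod_{j \in J} M^j\Bigr)_n(V)^{\Gamma} \;=\; \prod_{j \in J} M^j_n(V)^{\Gamma}.
\]

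First I would identify the family of admissible graph subgroups. The inclusion $\FF^{\prod_j M^j}_{G,n} \subset \bigcap_{j\in J} \FF^{M^j}_{G,n}$ is immediate from the projections $\prod_j M^j \to M^{j_0}$ together with Lemma~\ref{lemmcontain}. For the converse, suppose $\Gamma \in \bigcap_{j} \FF^{M^j}_{G,n}$ and pick any faithful $G$-representation $V$. Since each $M^j$ is fibrant and $\Nin$, Lemma~\ref{lemmfibrantss} yields that each $M^j_n(V)^{\Gamma}$ is weakly contractible, and in particular non-empty. The product over $j$ is then non-empty, witnessing $\Gamma \in \FF^{\prod_j M^j}_{G,n}$.

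Next I would check fibrancy in the sense of Remark~\ref{remmodelssfib}. Fix $n$, a compact Lie group $K$, a graph subgroup $\Gamma \in \fat(K,\Sigma_n)$, and an embedding $\psi \colon V \to W$ of faithful $K$-representations; the map $(\prod_j M^j)_n(\psi)^{\Gamma}$ is then the product over $j$ of the maps $M^j_n(\psi)^{\Gamma}$. If $\Gamma$ fails to belong to $\FF^{M^{j_0}}_{K,n}$ for some $j_0$, both source and target contain an empty factor and are themselves empty, so the map is trivially a weak homotopy equivalence. Otherwise $\Gamma$ lies in each $\FF^{M^j}_{K,n}$ and Lemma~\ref{lemmfibrantss} makes every $M^j_n(V)^{\Gamma}$ and $M^j_n(W)^{\Gamma}$ weakly contractible; since homotopy groups of a product of pointed spaces agree with the product of homotopy groups, source and target are weakly contractible, and any map between such spaces is a weak homotopy equivalence.

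Finally, invoking Lemma~\ref{lemmfibrantss} in the opposite direction, the same product-of-weakly-contractibles argument shows that for each $\Gamma \in \bigcap_j \FF^{M^j}_{G,n}$ and each faithful $G$-representation $V$ the fixed-point space $(\prod_j M^j)_n(V)^{\Gamma}$ is weakly contractible, so $\prod_j M^j$ is $\Nin$ with the claimed family of admissible graph subgroups. The one genuinely subtle point is the reverse inclusion in the first step: there I need a \emph{single} representation $V$ on which all the $M^j_n(V)^{\Gamma}$ are simultaneously non-empty, and this is precisely what fibrancy of each factor delivers via Lemma~\ref{lemmfibrantss}, circumventing the failure of arbitrary products to preserve global equivalences that is flagged just before the statement of the proposition.
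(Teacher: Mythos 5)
Your proof is correct and follows essentially the same route as the paper: identify the admissible graph subgroups using that products and $\Gamma$-fixed points are computed levelwise, then deduce the $\Nin$ property from Lemma~\ref{lemmfibrantss} because products of weakly contractible spaces are weakly contractible. The only minor differences are that the paper obtains fibrancy of the product in one line from the lifting property defining fibrations rather than re-verifying the fixed-point characterization of Remark~\ref{remmodelssfib}, and it leaves implicit the single-representation point that you rightly flag as the place where fibrancy of each factor is needed.
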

\begin{proof}
Since a product is non-empty if and only if each term is, and fixed points commute with arbitrary products, the admissible graph subgroups of $\prod_{j \in J} M^j$ are precisely $\bigcap_{j \in J} \FF^{M^j}_{G, n}$.

Arbitrary products of fibrant symmetric sequences in $\Spc$ are fibrant by the lifting property that defines fibrations. If each $M^j_n(V)^\Gamma$ is weakly contractible, so is $\prod_{j \in J} M^j_n(V)^\Gamma$, and then Lemma~\ref{lemmfibrantss} implies the result.
\end{proof}

\section{Global \texorpdfstring{$\Nin$}{N-infinity}-operads and global transfer systems}
\label{sectionoperadtransfer}

In this section we define global $\Nin$-operads and global transfer systems, which are our main objects of study. In the previous section we studied the $\Nin$-symmetric sequences in $\Spc$, and just as in the $G$-equivariant setting, a global $\Nin$-operad is simply an operad whose underlying symmetric sequence is $\Nin$.

\begin{defi}
\label{defiNinoperad}
A \emph{global $\Nin$-operad} is an operad in $(\Spc, \boxtimes)$, orthogonal spaces with the box product, whose underlying symmetric sequence in $\Spc$ is $\Nin$ in the sense of Definition~\ref{defiNin}. We use $\Ningl$ to denote the category of global $\Nin$-operads.  An \emph{equivalence of global $\Nin$-operads} is a morphism of global $\Nin$-operads whose underlying morphism of symmetric sequences in $\Spc$ is an equivalence in the sense of Definition~\ref{defiequivnin}. An equivalence of global $\Nin$-operads induces a Quillen equivalence between the respective categories of algebras in orthogonal spaces by \cite[Theorem II]{globaloperads}.
\end{defi}

\begin{rem}
We do not require a global $\Nin$-operad to be $\Sigma_n$-free as this is not required to obtain a model structure on its category of algebras (by \cite[Theorem I]{globaloperads}). This is in contrast to the case of $G$-spaces, where $\Sigma_n$-freeness is necessary to obtain a model structure on algebras, and thus it is a part of the definition of equivariant $\Nin$-operad, see \cite[Definition~3.7]{BLUMBERG2015658}.
\end{rem}

Let $H$ be a compact Lie group, and let $A$ be a finite $H$-set. The $H$-set $A$ is characterized by a continuous homomorphism $\phi_A \colon H \to \Sigma_n$ where $n = \lvert A \rvert$. We call the graph subgroup of this homomorphism the \emph{graph subgroup associated to $A$}, and we denote it by $\Gamma_A \in \fat(H, \Sigma_n)$. To be more precise, we should say that picking an ordering of the elements of $A$ determines $\phi_A$, but any two orderings yield homomorphisms $\phi_A$ that are conjugate via an element of $\Sigma_n$.

We say that a finite $H$-set $A$ is admissible for $M$ an $\Nin$-symmetric sequence in $\Spc$ if $\Gamma_A$ is admissible for $M$, that is, if $\Gamma_A \in \FMHn$ for $n = \lvert A \rvert$. By Lemma~\ref{lemmFMGn} \ref{lemmFMGn4} $\FMHn$ is closed under conjugation, so the choice of the ordering mentioned in the previous paragraph does not affect the admissibility of $A$. The admissible graph subgroups of an $\Nin$-symmetric sequence completely determine its admissible sets and vice versa.

Now let $K$ be a closed finite index subgroup of $H$. We say that the abstract \emph{transfer from $K$ to $H$} is admissible for $M \in \sobspc$ if the orbit $H/K$ is admissible for $M$ as an $H$-set. Each finite $H$-set decomposes as a disjoint union of orbits, and we will see later that the operadic structure of a global $\Nin$-operad imposes that the admissible $H$-sets are closed under disjoint unions and sub-$H$-sets. This will imply that the admissible sets (and thus the admissible graph subgroups) of a global $\Nin$-operad are completely determined by its admissible transfers, and therefore we can completely classify global $\Nin$-operads up to equivalence through their admissible transfers.  This leads to the following definition.

\begin{defi}[Global transfer system]
\label{defitransfer}
 Let $\Lie$ denote the class of all compact Lie groups. A \emph{global transfer system} $T$ is a binary relation $\leqslant_T$ on $\Lie$ such that:
\begin{enumerate}[\roman*)]
    \item The relation $\leqslant_T$ is a partial order (that is, it is transitive, reflexive and antisymmetric) that refines the finite index subgroup relation on $\Lie$ (if $ K \leqslant_T H$ then $K$ is a closed finite index subgroup of $H$).
    \item For any $G,H, K \in \Lie$ and any continuous homomorpism $\theta \colon G \to H$, if $K \leqslant_T H$ then $\theta^{-1}(K) \leqslant_T G$.
\end{enumerate}
\end{defi}

\begin{rem}
The global transfer systems form a poset with respect to inclusion, which we denote by $\Ilie$. Explicitly, $T \subset T'$ for $T, T' \in \Ilie$ if $K \leqslant_T H$ implies that $K \leqslant_{T'} H$. Condition ii) implies that continuous isomorphisms of compact Lie groups preserve the relation $\leqslant_T$, and there are only countably many isomorphism classes of compact Lie groups, thus $\Ilie$ is indeed a set. Arbitrary intersections of global transfer systems are again global transfer systems, and these give the meets of the poset $\Ilie$. There is a greatest global transfer system $\All$, for which $K \leqslant_\All H$ for any closed finite index subgroup $K \leqslant H$. Therefore $\Ilie$ also has arbitrary joins, and is thus a complete lattice. Joins in $\Ilie$ are the transitive closures of unions.
\end{rem}

\begin{rem}
\label{remindextransfer}
In the $G$-equivariant case, $\Nin$-operads are completely classified by their associated \emph{$G$-indexing systems} of admissible $H$-sets for $H \leqslant G$, which were defined in \cite[Definition 3.22]{BLUMBERG2015658}. As noted in \cite[Section~3]{Rubin2} and \cite[Lemma 6]{balchin2021ninftyoperads}, this definition is equivalent to that of a \emph{$G$-transfer system} (\cite[Definition~3.4]{Rubin2}), which only keeps track of the admissible orbits. As this second definition is simpler, we have chosen the analogous Definition~\ref{defitransfer} for a global transfer system.
\end{rem}

\begin{rem}
A $G$-transfer system is closed under conjugation by elements of $G$ and restriction to a subgroup. However according to Condition ii) of Definition~\ref{defitransfer} a global transfer system has to be closed under pullback along arbitrary continuous homomorphisms. This corresponds directly to the extra structure that globally equivariant objects possess.
\end{rem}

\section{The structure of the homotopy category of global \texorpdfstring{$\Nin$}{N-infinity}-operads}

Our first goal in this section is to check that the admissible transfers of a global $\Nin$-operad assemble into a global transfer system. After that we will check that a morphism between two
global $\Nin$-operads is an equivalence if and only if their associated global transfer systems are equal. Finally we will prove that the homotopy category of global $\Nin$-operads with respect to their equivalences is thin and embeds fully into the poset of global transfer systems. We will check that this embedding is essentially surjective in the next section.

\begin{constr}
Let $\Op$ be a global $\Nin$-operad. We construct a relation $T_\Op$ on $\Lie$ associated to $\Op$. If $H$ is a compact Lie group and $K \leqslant H$ is a closed finite index subgroup, we say that $K \leqslant_{T_\Op} H$ if the orbit $H / K$ is admissible for $\Op$ as an $H$-set, i. e. if $\Gamma_{H/K} \in \FOpHn$ where $n = \lvert H/K \rvert$.
\end{constr}

As seen in Proposition~\ref{propequininss}, whether a morphism $f \colon \Op \to \Pop$ of global $\Nin$-operads is an equivalence depends only on the admissible graph subgroups of the two operads, $\FOpHn$ and $\FPopHn$ for varying $H$ and $n$. Each graph subgroup $\Gamma \in \fat(G, \Sigma_n)$ determines an $H$-set with $n$ elements for some subgroup $H \leqslant G$. In the following lemma we check that a disjoint union of $H$-sets is admissible for a global $\Nin$-operad if and only if each of the terms is admissible. This will imply that whether a morphism $f \colon \Op \to \Pop$ of global $\Nin$-operads is an equivalence depends only on the admissible orbits, and hence on the associated relations $T_\Op$ and $T_\Pop$.

\begin{lemm}
\label{lemmdisjointunion}
Let $\Op$ be a global $\Nin$-operad. Let $R$ and $S$ be finite $H$-sets, with $n = \lvert R \rvert$ and $m = \lvert S \rvert$. Then $\Gamma_{R \coprod S} \in \FOpHnm$ if and only if $\Gamma_R \in \FOpHn$ and $\Gamma_S \in \FOpHm$.
\end{lemm}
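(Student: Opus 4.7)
The plan is to prove both implications using the operadic composition, relying on two features of a global $\Nin$-operad: non-emptiness of every $\Op_k$ (in particular $\Op_0$) and the existence of an $H$-fixed operadic unit in $\Op_1(V)$ for every $H$-representation $V$, which arises from the unit morphism $\mathbf{1} \to \Op_1$ of the operad, since the monoidal unit $\mathbf{1}$ of $\boxtimes$ is a single point in each level (so any $H$-action on it is trivial).

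For the reverse implication I would pick witnesses $x \in \Op_n(V_1)^{\Gamma_R}$ and $y \in \Op_m(V_2)^{\Gamma_S}$, and, by Lemma~\ref{lemmFMGn}~\ref{lemmFMGn5} applied to the non-empty $\Op_2$, a point $z \in \Op_2(V_3)^{H \times \{e\}}$. The universal bimorphism underlying the operadic composition $\gamma \colon \Op_2 \boxtimes \Op_n \boxtimes \Op_m \to \Op_{n+m}$ yields a point $\gamma(z; x, y) \in \Op_{n+m}(V_3 \oplus V_1 \oplus V_2)$. Because $z$ has trivial $\Sigma_2$-action (so no block permutation is generated) and because $\phi_{R \coprod S} = (\phi_R, \phi_S)$ factors through the block inclusion $\Sigma_n \times \Sigma_m \leqslant \Sigma_{n+m}$, a direct computation with the operadic $\Sigma$-equivariance shows that $\gamma(z; x, y)$ is $\Gamma_{R \coprod S}$-fixed, witnessing $\Gamma_{R \coprod S} \in \FOpHnm$.

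For the forward implication, given $p \in \Op_{n+m}(V)^{\Gamma_{R \coprod S}}$, I would extract a $\Gamma_R$-fixed point via the composition $\gamma \colon \Op_{n+m} \boxtimes \Op_1^{\boxtimes n} \boxtimes \Op_0^{\boxtimes m} \to \Op_n$ with the $\Op_1$-slots placed at positions in $R$ and the $\Op_0$-slots at positions in $S$. At each $R$-slot I plug in the operadic unit and at each $S$-slot a single $H$-fixed element $c \in \Op_0(V')^H$, whose existence is guaranteed by Lemma~\ref{lemmFMGn}~\ref{lemmFMGn5} since $\Op_0$ is non-empty. The resulting point in $\Op_n$ is $\Gamma_R$-fixed: the block permutation in $\Sigma_n$ associated to $\phi_{R \coprod S}(h) \in \Sigma_n \times \Sigma_m$ at arities $(1, \ldots, 1, 0, \ldots, 0)$ collapses to $\phi_R(h)$ (the $\phi_S(h)$-component only permutes blocks of arity zero), while the symmetry of the inputs (all units are equal, all $c$'s are equal) means that permuting the input slots leaves the input tuple invariant. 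The symmetric choice—units at $S$-slots, $c$'s at $R$-slots—produces a $\Gamma_S$-fixed point in $\Op_m$.

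The main obstacle will be the careful unwinding of the operadic $\Sigma$-equivariance at these specific block sizes, together with tracking how the $H$- and $\Sigma_{n+m}$-actions interact across the bimorphism structure of $\boxtimes$, where the several auxiliary $H$-representations combine into a single direct sum. Neither implication follows from the closure properties of Corollary~\ref{corofamily} alone, since $\Gamma_R$, $\Gamma_S$, and $\Gamma_{R \coprod S}$ live in the distinct ambient groups $H \times \Sigma_n$, $H \times \Sigma_m$, and $H \times \Sigma_{n+m}$; the operadic composition is essential for transferring fixed-point data between them.
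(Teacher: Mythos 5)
Your proposal is correct and follows essentially the same route as the paper's proof: both directions use the compositions $\Op_2 \boxtimes \Op_n \boxtimes \Op_m \to \Op_{n+m}$ and $\Op_{n+m} \boxtimes \Op_1^{\boxtimes n} \boxtimes \Op_0^{\boxtimes m} \to \Op_n$, with $H$-fixed fillers and the block-permutation analysis you describe, and the "main obstacle" you flag is resolved in the paper exactly as you anticipate, by regarding $V_0 \oplus V_1^{\oplus n} \oplus V_2^{\oplus m}$ as an $H$-representation via the action pulled back along $\Gamma_{R \coprod S}$. The only cosmetic difference is that you invoke the operadic unit $\mathbf{1} \to \Op_1$, whereas the paper simply takes an arbitrary $H$-fixed point of $\Op_1(V_1)$ supplied by Lemma~\ref{lemmFMGn}~\ref{lemmFMGn5}; both work.
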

\begin{proof}
We adapt the arguments from \cite[Lemma 4.10]{BLUMBERG2015658} and \cite[Lemma 4.15]{BLUMBERG2015658}. Assume that $\Gamma_R \in \FOpHn$ and $\Gamma_S \in \FOpHm$. By Lemma~\ref{lemmdefsoperad} the operadic structure on $\Op$ determines a $(\Sigma_n \times \Sigma_m)$-equivariant composition morphism of orthogonal spaces
\[\gamma \colon \Op_2 \boxtimes \Op_n \boxtimes \Op_m \to \Op_{n + m}.\]

Since $\Op$ is an $\Nin$-symmetric sequence, there are $H$-representations $V_0$, $V_1$ and $V_2$ such that $\Op_n(V_1)^{\Gamma_R} \neq \emptyset$, $\Op_m(V_2)^{\Gamma_S} \neq \emptyset$, and since graphs of trivial homomorphisms are admissible by Lemma~\ref{lemmFMGn} \ref{lemmFMGn5}, $\Op_2(V_0)^{H}\neq\emptyset$. Let $\Gamma_{R \coprod S} \leqslant H \times \Sigma_n \times \Sigma_m \leqslant H \times \Sigma_{n + m}$ denote the graph subgroup that represents the $H$-set $R \coprod S$. The projections of $\Gamma_{R \coprod S}$ to $H \times \Sigma_n$ and $H \times \Sigma_m$ are $\Gamma_R$ and $\Gamma_S$ respectively.

Since $\gamma$ is $(\Sigma_n \times \Sigma_m)$-equivariant the universal property of the box product of orthogonal spaces (see \cite[Section 1.3]{global}) yields an $(H \times \Sigma_n \times \Sigma_m)$-equivariant map
\[\Op_2(V_0) \times \Op_n(V_1) \times \Op_m(V_2) \to \Op_{n + m}(V_0 \oplus V_1 \oplus V_2).\]
The $\Gamma_{R \coprod S}$-fixed points of the source are
\[\Op_2(V_0)^{H} \times \Op_n(V_1)^{\Gamma_R} \times \Op_m(V_2)^{\Gamma_S} \neq \emptyset,\]
so $\Op_{n + m}(V_0 \oplus V_1 \oplus V_2)^{\Gamma_{R \coprod S}}$ is also non-empty, and therefore $\Gamma_{R \coprod S} \in \FOpHnm$.

Assume now that $\Gamma_{R \coprod S} \in \FOpHnm$. By Lemma~\ref{lemmdefsoperad} we obtain a $(\Sigma_n \times \Sigma_m)$-equivariant morphism
\[\Op_{n + m} \boxtimes \Op_1^{\boxtimes n} \boxtimes \Op_0^{\boxtimes m} \to \Op_n\]
where $\Sigma_m$ acts trivially on $\Op_n$. Since $\Op$ is an $\Nin$-symmetric sequence, there are $H$-representations $V_0$, $V_1$ and $V_2$ such that $\Op_{n + m}(V_0)^{\Gamma_{R \coprod S}} \neq \emptyset$, $\Op_1(V_1)^{H} \neq \emptyset$ and $\Op_0(V_2)^{H} \neq \emptyset$. We obtain an $(H \times \Sigma_n \times \Sigma_m)$-equivariant map 
\begin{equation}
\label{propopgivesindexing1}
\Op_{n + m}(V_0) \times \Op_1(V_1)^{\times n} \times \Op_0(V_2)^{\times m} \to \Op_n(V_0 \oplus V_1^{\oplus n} \oplus V_2^{\oplus m}).
\end{equation}
where on the target $H$ acts via the $H$-representation $V_0 \oplus V_1^{\oplus n} \oplus V_2^{\oplus m}$, the group $\Sigma_n$ acts via the action on $\Op_n$ and by permuting the terms of $V_1^{\oplus n}$, and $\Sigma_m$ acts by permuting the terms of $V_2^{\oplus m}$. The permutations on $V_1^{\oplus n}$ and $V_2^{\oplus m}$ arise from the fact that permuting the terms of $\Op_1(V_1)^{\times n}$ and $\Op_0(V_2)^{\times m}$ also permutes the ordering of the copies of $V_1$ and $V_2$.

By the previous paragraph the $\Gamma_{R \coprod S}$-fixed points of the source of (\ref{propopgivesindexing1}) are non-empty, so $\Op_n(V_0 \oplus V_1^{\oplus n} \oplus V_2^{\oplus m})^{\Gamma_{R \coprod S}} \neq \emptyset$.

Let $W$ be the $H$-representation whose underlying linear isometric space is $V_0 \oplus V_1^{\oplus n} \oplus V_2^{\oplus m}$, but with $H$-action given by pulling back the $H \times \Sigma_n \times \Sigma_m$-action along the homomorphism $H \to H \times H \to H \times \Sigma_n \times \Sigma_m$ given by $\Gamma_{R \coprod S}$. Then 
\[\emptyset \neq \Op_n(V_0 \oplus V_1^{\oplus n} \oplus V_2^{\oplus m})^{\Gamma_{R \coprod S}} = \Op_n(W)^{\Gamma_{R \coprod S}}=\Op_n(W)^{\Gamma_R}\]
and so $\Gamma_R \in \FOpHn$. By exchanging the roles of $R$ and $S$ in the previous argument we can prove that $\Gamma_S \in \FOpHm$.
\end{proof}

We now check that $T_\Op$ is actually a global transfer system. For this the operadic composition morphisms will again be essential, in order to check that $T_\Op$ is transitive.

\begin{prop}
\label{propopgivesindexing}
Let $\Op$ be a global $\Nin$-operad. The relation $T_\Op$ is a global transfer system.
\end{prop}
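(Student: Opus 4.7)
The plan is to verify each axiom of a global transfer system for $T_\Op$, using the admissibility properties from Lemma~\ref{lemmFMGn} and Lemma~\ref{lemmdisjointunion} together with the operadic composition of $\Op$. The refinement of the finite index subgroup relation holds by construction. Reflexivity $H \leqslant_{T_\Op} H$ follows from Lemma~\ref{lemmFMGn}~\ref{lemmFMGn5} applied to the trivial homomorphism $H \to \Sigma_1$, since $\Op_1$ is non-empty as $\Op$ is a global $\Nin$-operad. Antisymmetry is immediate from the refinement of the subgroup relation. For condition ii), given $\theta \colon G \to H$ and $K \leqslant_{T_\Op} H$ with $n = [H:K]$, the preimage $\theta^{-1}(K)$ is closed in $G$ of index dividing $n$. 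Lemma~\ref{lemmFMGn}~\ref{lemmFMGn2} gives $\Gamma_{\phi_{H/K} \circ \theta} \in \FF^{\Op}_{G, n}$. This graph subgroup corresponds to the $G$-set $\theta^\ast(H/K)$, which decomposes as a disjoint union of $G$-orbits, one of which is $G/\theta^{-1}(K)$. Iterated application of Lemma~\ref{lemmdisjointunion} shows every orbit in this decomposition is admissible for $\Op$, in particular $G/\theta^{-1}(K)$, so $\theta^{-1}(K) \leqslant_{T_\Op} G$.

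The central step is transitivity. Given $L \leqslant_{T_\Op} K$ and $K \leqslant_{T_\Op} H$, set $n = [H:K]$, $m = [K:L]$, so $[H:L] = nm$. Choosing coset representatives $h_1, \ldots, h_n$ of $K$ in $H$ and $k_1, \ldots, k_m$ of $L$ in $K$ identifies $H/L$ with $\{1, \ldots, n\} \times \{1, \ldots, m\}$ as an $H$-set, and one checks that $\phi_{H/L} \colon H \to \Sigma_{nm}$ factors through the wreath product embedding $\Sigma_n \wr \Sigma_m \hookrightarrow \Sigma_{nm}$, with $\Sigma_n$-component equal to $\phi_{H/K}$ and, on restriction to $K$, with first $\Sigma_m$-component equal to $\phi_{K/L}$. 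I would then use the operadic composition
\[\gamma \colon \Op_n \boxtimes \Op_m^{\boxtimes n} \to \Op_{nm},\]
which is $\Sigma_n \wr \Sigma_m$-equivariant. Pick an $H$-representation $V_0$ with $x \in \Op_n(V_0)^{\Gamma_{H/K}}$ and a $K$-representation $W$ with $y \in \Op_m(W)^{\Gamma_{K/L}}$, and set $V' = \mathrm{Ind}_K^H W$, so that $y$ sits in $\Op_m(V')$ via the $K$-equivariant inclusion $W \hookrightarrow \res^H_K V'$. The universal property of the box product then yields an $(H \times \Sigma_n \wr \Sigma_m)$-equivariant map
\[\Op_n(V_0) \times \Op_m(V')^n \to \Op_{nm}(V_0 \oplus (V')^n),\]
and the tuple $(x; h_1 \cdot y, \ldots, h_n \cdot y)$ in the source is $\Gamma_{H/L}$-fixed; its image under $\gamma$ provides the required fixed point, establishing $L \leqslant_{T_\Op} H$.

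The main obstacle is verifying that $(x; h_1 \cdot y, \ldots, h_n \cdot y)$ is $\Gamma_{H/L}$-fixed. For $h \in H$ and each index $j$, write $h h_j = h_{\tau(h)(j)} \tilde{k}(h, j)$ with $\tilde{k}(h, j) \in K$. The wreath product factorization of $\phi_{H/L}$ gives that the $j$-th $\Sigma_m$-component of $\phi_{H/L}(h)$ is $\phi_{K/L}(\tilde{k}(h, j))$. Working out the $\Gamma_{H/L}$-action on the chosen tuple reduces the fixed-point condition at slot $\tau(h)(j)$ to $\phi_{K/L}(\tilde{k}(h, j)) \cdot \tilde{k}(h, j) \cdot y = y$, which is precisely the $\Gamma_{K/L}$-invariance of $y$. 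The bookkeeping is conceptually clean but requires care with the conventions for the $\Sigma_n \wr \Sigma_m$-action on $\Op_m(V')^n$ and its interaction with the $H$-action on $V' = \mathrm{Ind}_K^H W$.
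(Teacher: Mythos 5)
Your proposal is correct and follows essentially the same route as the paper's proof, which likewise adapts \cite[Lemma 4.12]{BLUMBERG2015658}: the wreath-product-equivariant composition $\Op_n \boxtimes \Op_m^{\boxtimes n} \to \Op_{nm}$ applied to the tuple of coset translates of a $\Gamma_{K/L}$-fixed point, your only deviations being the use of the induced representation $\mathrm{Ind}_K^H W$ where the paper invokes tom Dieck's embedding theorem, and carrying out explicitly the fixed-point check that the paper delegates to Blumberg--Hill. The one small step to add at the end is the paper's ``untwisting'' move: regard $V_0 \oplus (V')^{\oplus n}$ as an $H$-representation by pulling back the $(H \times \Sigma_n \wr \Sigma_m)$-action along the graph of $\phi_{H/L}$, so that the fixed point you produce indeed witnesses $\Gamma_{H/L} \in \FF^{\Op}_{H, nm}$ in the sense of Definition~\ref{defiadmissiblegroups}.
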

\begin{proof}
By construction $T_\Op$ refines the closed finite index subgroup relation, and is thus antisymmetric. By Lemma~\ref{lemmFMGn} \ref{lemmFMGn5} the trivial $H$-set $H/H$ is admissible for $\Op$ for any $H$, so $T_\Op$ is reflexive.

Let G, H and K be compact Lie groups with $K \leqslant H$ a subgroup of index $n$, and let $\theta \colon G \to H$ be a continuous homomorphism. Assume that $K \leqslant_{T_\Op} H$, that is, $\Gamma_{H/K} \in \FOpHn$. By Lemma~\ref{lemmFMGn} \ref{lemmFMGn2} the $G$-set $\theta^\ast(H/K)$ is also admissible for $\Op$. The $G$-stabilizer of the equivalence class of the unit of $H$ in $\theta^\ast(H/K)$ is precisely $\theta^{-1}(K)$, so $\theta^\ast(H/K)$ contains the orbit $G / \theta^{-1}(K)$. By Lemma~\ref{lemmdisjointunion} the orbit $G / \theta^{-1}(K)$ is admissible for $\Op$, and thus $\theta^{-1}(K) \leqslant_{T_\Op} G$, showing that condition ii) in the definition of global transfer system holds.

Lastly we check the transitivity of $T_\Op$. For this we adapt the argument from \cite[Lemma 4.12]{BLUMBERG2015658}. Let $K \leqslant H \leqslant G$, with $K$ of index $n$ in $H$ and $H$ of index $m$ in $G$. Assume that $K \leqslant_{T_\Op} H$ and $H \leqslant_{T_\Op} G$.

The $G$-action on $G / H$ determines a homomorphism $\phi_{G / H} \colon G \to \Sigma_m$, and the $H$-action on $H /K$ determines a homomorphism $\phi_{H / K} \colon H \to \Sigma_n$. Pick $(g_1, \dots, g_m)$ a set of coset representatives of $G / H$. This determines a homomorphism $\beta \colon G \to \Sigma_m \wr H$. The composite
\[G \to \Sigma_m \wr H \to \Sigma_m \wr \Sigma_n \to \Sigma_{m n}\]
given by $\beta$ and $\phi_{H / K}$ is precisely the homomorphism that represents the $G$-action on $G / K$. This also implies that $\Gamma_{G/K} \leqslant G \times \Sigma_{m n}$ is contained in $G \times (\Sigma_m \wr \Sigma_n)$.

By Lemma~\ref{lemmdefsoperad} we have a $\Sigma_m \wr \Sigma_n$-equivariant morphism
\[\Op_m \boxtimes \Op_n^{\boxtimes m} \to \Op_{m n}.\]
By admissibility, we can pick a $G$-representation $V_0$ such that $\Op_m(V_0)^{\Gamma_{G / H}} \neq \emptyset$. Similarly we can choose an $H$-representation $V_1$ such that $\Op_n(V_1)^{\Gamma_{H/K}} \neq \emptyset$. By \cite[III Theorem 4.5]{diecklie} $V_1$ embeds $H$-equivariantly in some $G$-representation $V_1'$ and so there exists some point $x$ in $\Op_n(V_1')^{\Gamma_{H/K}}$. Then we obtain a $G \times (\Sigma_m \wr \Sigma_n)$-equivariant map 
\begin{equation}
\label{propopgivesindexing2}
\Op_m(V_0) \times \Op_n(V_1')^{\times m} \to \Op_{m n}(V_0 \oplus V_1'^{\oplus m}).
\end{equation}
The point $(g_1 x, \dots , g_m x) \in \Op_n(V_1')^{\times m}$ is fixed by the action of $\Gamma_{G/K} \leqslant G \times \Sigma_m \wr \Sigma_n$. An explicit check of this fact can be found in the proof of \cite[Lemma 4.12]{BLUMBERG2015658}. This point depends on our choice of representatives $(g_1, \dots, g_m)$, but we only care about finding a $\Gamma_{G/K}$-fixed point in $\Op_n(V_1')^{\times m}$.

Since $\Sigma_n$ acts trivially on $\Op_m(V_0)$, $\Op_m(V_0)^{\Gamma_{G / K}} = \Op_m(V_0)^{\Gamma_{G / H}} \neq \emptyset$, and the $\Gamma_{G/K}$-fixed points of the source of (\ref{propopgivesindexing2}) are non-empty, thus the same is true for the target of (\ref{propopgivesindexing2}). As before, let $W$ be the $G$-representation $V_0 \oplus V_1^{\oplus m}$ with the $G$-action given by pulling back the $G \times (\Sigma_m \wr \Sigma_n)$-action along $\phi_{G / H}$ (note that on $V_0 \oplus V_1^{\oplus m}$ the copies of $\Sigma_n$ act trivially). Thus $\Op_{m n}(W)^{\Gamma_{G/K}}$ is non-empty, and so $K \leqslant_{T_\Op} G$.
\end{proof}

\begin{prop}
\label{propequiNinoperad}
Let $f \colon \Op \to \Pop$ be a morphism of global $\Nin$-operads. Then $f$ is an equivalence if and only if  $T_\Op = T_{\Pop}$.
\end{prop}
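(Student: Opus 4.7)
The plan is to reduce the proposition, via Proposition~\ref{propequininss}, to proving $\FOpHn = \FPopHn$ for every compact Lie group $H$ and every $n \geqslant 0$, and then to observe that these admissible graph subgroup sets are completely determined by the admissible orbits, i.e.\ by the relations $T_\Op$ and $T_\Pop$. The forward direction is immediate: if $f$ is an equivalence then Proposition~\ref{propequininss} yields $\FOpHn = \FPopHn$ for every $H$ and $n$, and specializing to the graph subgroups of transitive $H$-sets $H/K$ directly gives $T_\Op = T_\Pop$.

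For the converse, I would assume $T_\Op = T_\Pop$ and fix a graph subgroup $\Gamma \in \fat(H, \Sigma_n)$. By Definition~\ref{defigraphsub} one has $\Gamma = \Gamma_\phi$ for some continuous homomorphism $\phi \colon K \to \Sigma_n$ with $K \leqslant H$, and by Lemma~\ref{lemmFMGn}~\ref{lemmFMGn1} admissibility of $\Gamma_\phi$ in $\FOpHn$ (resp.\ $\FPopHn$) coincides with admissibility in $\FF^\Op_{K, n}$ (resp.\ $\FF^\Pop_{K, n}$), so it suffices to compare the latter two. The homomorphism $\phi$ endows $\{1, \ldots, n\}$ with the structure of a finite $K$-set $A$ whose associated graph subgroup is $\Gamma_A = \Gamma_\phi$, and $A$ decomposes as an orbit coproduct $A \cong K/L_1 \sqcup \cdots \sqcup K/L_r$. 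Iterating Lemma~\ref{lemmdisjointunion}, $\Gamma_A$ lies in $\FF^\Op_{K, n}$ if and only if each $\Gamma_{K/L_i}$ lies in $\FF^\Op_{K, [K : L_i]}$, which by construction of $T_\Op$ amounts to $L_i \leqslant_{T_\Op} K$ for every $i$. The same equivalence holds for $\Pop$, so the hypothesis $T_\Op = T_\Pop$ forces $\FF^\Op_{K, n} = \FF^\Pop_{K, n}$, and one concludes by Proposition~\ref{propequininss}.

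There is no real obstacle in this argument: all the essential content is already packaged in Lemma~\ref{lemmdisjointunion}, and the remaining work is bookkeeping. The one mild subtlety is that $\Gamma_A$ depends on a choice of ordering of $A$, but two different orderings give $\Sigma_n$-conjugate graph subgroups, and admissibility is preserved by conjugation thanks to Lemma~\ref{lemmFMGn}~\ref{lemmFMGn4}, so this choice is irrelevant.
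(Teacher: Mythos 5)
Your proof is correct and follows essentially the same route as the paper: reduce via Proposition~\ref{propequininss} to comparing admissible graph subgroups, then use the orbit decomposition of finite $H$-sets together with Lemma~\ref{lemmdisjointunion} to see that admissibility is determined by the admissible orbits, i.e.\ by $T_\Op$ and $T_\Pop$. Your version merely spells out the bookkeeping (reduction to the subgroup $K$ via Lemma~\ref{lemmFMGn}~\ref{lemmFMGn1} and the ordering issue via Lemma~\ref{lemmFMGn}~\ref{lemmFMGn4}) that the paper leaves implicit.
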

\begin{proof}
By Proposition~\ref{propequininss} the morphism $f$ is an equivalence if and only if $\Op$ and $\Pop$ have the same admissible graph subgroups, or equivalently, the same admissible sets. Each finite $H$-set decomposes as a disjoint union of orbits. Lemma~\ref{lemmdisjointunion} implies that a finite $H$-set is admissible for a global $\Nin$-operad if and only if each of the $H$-orbits that compose it is admissible for the operad. And by construction the operads $\Op$ and $\Pop$ have the same admissible orbits precisely if $T_\Op = T_{\Pop}$.
\end{proof}

\begin{constr}
\label{constrT}
Using these results we construct a functor $T_{(-)}$ from the category of global $\Nin$-operads $\Ningl$ to $\Igl$ the poset of global transfer systems, ordered by inclusion. It sends a global $\Nin$-operad $\Op$ to its associated global transfer system $T_\Op$. It is indeed a functor by Lemma~\ref{lemmcontain}. By Proposition~\ref{propequiNinoperad} this functor sends equivalences of global $\Nin$-operads to identities. Therefore it descends to a functor 
\[\mathpzc{T} \colon \Ho(\Ningl) \to \Igl\]
from the homotopy category of global $\Nin$-operads with respect to their equivalences to the poset $\Igl$. 
\end{constr}

In Theorem~\ref{thmmain} we will see that this functor is essentially surjective. We check now that it is fully faithful, so that we eventually obtain that it is an equivalence of categories. We do this using the following general result.

\begin{lemm}
\label{lemmgeneralhoposet}
Let $(\cat, W)$ be a category with weak equivalences, that is, such that $W$ is a subcategory of $\cat$ that contains all isomorphisms and satisfies the 2-out-of-3 property. Assume that $\cat$ has all finite products, and that there is a poset $P$ and a functor $F \colon \cat \to P$ that sends weak equivalences to identities. Furthermore, assume that if $F(X) \leqslant F(Y)$ then the projection $\pi_X \colon X \times Y \to X$ is a weak equivalence. Then $\Ho(\cat)$ the homotopy category of $(\cat, W)$ (its localization at $W$) is a thin category (or preordered class), and the functor $\Ho(\cat) \to P$ induced by $F$ is fully faithful.
\end{lemm}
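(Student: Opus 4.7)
By the universal property of the localization $\Ho(\cat)$, the functor $F$ factors through a functor $\bar F \colon \Ho(\cat) \to P$, since $F$ sends weak equivalences to identities. As $P$ is thin, once I establish that $\Ho(\cat)$ is thin it will suffice to check that a morphism $X \to Y$ exists in $\Ho(\cat)$ precisely when $F(X) \leqslant F(Y)$. The forward direction is just functoriality of $\bar F$; for the converse, whenever $F(X) \leqslant F(Y)$ the hypothesis makes $\pi_X \colon X \times Y \to X$ a weak equivalence, so $\pi_Y \circ \pi_X^{-1}$ is the required morphism in $\Ho(\cat)$.

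The main work is therefore to prove that $\Ho(\cat)$ is thin. I will do this by showing the stronger statement that every morphism $\phi \colon X \to Y$ in $\Ho(\cat)$ equals the canonical form $\pi_Y \circ \pi_X^{-1}$; this is well-defined because any such $\phi$ forces $F(X) \leqslant F(Y)$, and hence $\pi_X$ is a weak equivalence. First I handle the two kinds of generators of $\Ho(\cat)$. For $f \colon X \to Y$ in $\cat$, the map $(\id_X, f) \colon X \to X \times Y$ satisfies $\pi_X \circ (\id_X, f) = \id_X$; since $\pi_X$ is invertible in $\Ho(\cat)$ this forces $(\id_X, f) = \pi_X^{-1}$, so $[f] = \pi_Y \circ (\id_X, f) = \pi_Y \circ \pi_X^{-1}$. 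For a formal inverse of a weak equivalence $w \colon Y \to X$, the equality $F(X) = F(Y)$ makes both projections from $X \times Y$ weak equivalences, and an analogous computation using $(w, \id_Y) \colon Y \to X \times Y$, together with $2$-out-of-$3$, yields $[w]^{-1} = \pi_Y \circ \pi_X^{-1}$.

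The main obstacle will be the composition step, because the ``wrong'' projections from an iterated product need not be weak equivalences. Given $\gamma \colon X \to Z$ and $\beta \colon Z \to Y$ both already known to be in canonical form, I will work inside the triple product $X \times Z \times Y$, with its two-factor projections $p_{XZ}, p_{ZY}, p_{XY}$ and one-factor projections $p_X, p_Z, p_Y$. Associativity of finite products and the hypothesis applied along the chain $F(X) \leqslant F(Z) \leqslant F(Y)$ show that $p_X$, $p_{XZ}$, and $p_{XY}$ are weak equivalences, though $p_Z$, $p_Y$, and $p_{ZY}$ need not be. The identities $\pi_X^{XZ} \circ p_{XZ} = p_X$ and $\pi_Z^{XZ} \circ p_{XZ} = p_Z$ immediately give $\gamma = p_Z \circ p_X^{-1}$ in $\Ho(\cat)$. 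For $\beta$ I rewrite $p_Z = \pi_Z^{ZY} \circ p_{ZY}$ and exploit the invertibility of $\pi_Z^{ZY}$ in $\Ho(\cat)$ (as a weak equivalence) to cancel when composing $\beta$ with $\gamma$, obtaining $\beta \circ \gamma = p_Y \circ p_X^{-1}$; a final rewrite via $p_Y = \pi_Y^{XY} \circ p_{XY}$ and $p_X = \pi_X^{XY} \circ p_{XY}$, using that $p_{XY}$ is a weak equivalence, puts this in the canonical form for the pair $(X, Y)$. An induction on the length of a zig-zag representative then reduces every morphism of $\Ho(\cat)$ to these cases, completing the proof of thinness and hence of full faithfulness of $\bar F$.
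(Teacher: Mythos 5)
Your proposal is correct and follows essentially the same route as the paper: every morphism of $\Ho(\cat)$ is reduced to the canonical form $Q(\pi_Y) \circ Q(\pi_X)^{-1}$, with the generators handled via the graph maps $(\id_X, f)$, $(w,\id_Y)$ into $X \times Y$ and the composition/induction step handled inside the triple product $X \times Z \times Y$, exactly as in the paper's zigzag induction. The only cosmetic difference is that the paper first records the auxiliary fact $Q(f)=Q(g)$ for parallel morphisms and phrases the triple-product step as a commutative diagram, whereas you derive the canonical form directly and manipulate the projections algebraically.
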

\begin{proof}
The homotopy category of $\cat$ is characterized by a functor $Q \colon \cat \to \Ho(\cat)$ that sends morphisms in $W$ to isomorphisms and satisfies a certain universal property. We first prove that under our assumptions, for any two morphisms $f, g \colon X \to Y$, in the homotopy category $Q(f)= Q(g)$.

Consider the morphisms $(\id_X, f), (\id_X, g) \colon X \to X \times Y$. When postcomposed with $\pi_X \colon X \times Y \to X$ they become equal. Therefore $Q(\pi_X) \circ Q((\id_X, f)) = Q(\pi_X) \circ Q((\id_X, g))$. The existence of morphisms from $X$ to $Y$ shows that $F(X) \leqslant F(Y)$, and then by our assumptions $Q(\pi_X)$ is an isomorphism, so $Q((\id_X, f)) = Q((\id_X, g))$. This implies that $Q(f) = Q(\pi_Y) \circ Q((\id_X, f)) = Q(\pi_Y) \circ Q((\id_X, g)) = Q(g)$.

We will use the general facts that $Q$ is a bijection on objects, and each morphism of the homotopy category $\Ho(\cat)$ is represented by a composition of morphisms which are either of the form $Q(f)$ for $f$ a morphism in $\cat$, or $Q(g)^{-1}$ for $g \in W$ a weak equivalence in $\cat$.

We now prove that for any two $X, Y \in \cat$, $\Ho(\cat)(X, Y)$ is non-empty if and only if $F(X) \leqslant F(Y)$. First note that if $F(X) \leqslant F(Y)$ then $Q(\pi_Y) \circ Q(\pi_X)^{-1}$ is a morphism in $\Ho(\cat)$ from $X$ to $Y$. The functor $F$ sends weak equivalences of $\cat$ to identities, so by the universal property of the homotopy category it induces a functor $\mathpzc{F} \colon \Ho(\cat) \to P$ with $\mathpzc{F} \circ Q = F$ that shows that if $\Ho(\cat)(X, Y)$ is non-empty then $F(X) \leqslant F(Y)$.

Now we check that if $\Ho(\cat)(X, Y)$ is non-empty, any $\alpha \in \Ho(\cat)(X, Y)$ equals $Q(\pi_Y) \circ Q(\pi_X)^{-1}$ using induction on the length of a zigzag that composes to $\alpha$ (a composition in $\Ho(\cat)$ of morphisms of the form $Q(f)$ or $Q(g)^{-1}$ for $g$ a weak equivalence in $\cat$). For the base case, assume that $\alpha=Q(f)$ for $f \colon X \to Y$ a morphism in $\cat$ or $\alpha = Q(g)^{-1}$ for $g \colon Y \to X$ a weak equivalence in $\cat$. In the first case, by the statement that we proved in the beginning, $Q(f \circ \pi_X) = Q(\pi_Y)$ and thus $\alpha = Q(f) = Q(\pi_Y) \circ Q(\pi_X)^{-1}$. In the second case $Q(g \circ \pi_Y) = Q(\pi_X)$ and thus $\alpha = Q(g)^{-1} = Q(\pi_Y) \circ Q(\pi_X)^{-1}$.

For the induction step, let $\alpha \in \Ho(\cat)(X, Y)$ be represented by a zigzag of length at least $2$. Split it into a zigzag from $X$ to some $X'$ and another zigzag from $X'$ to $Y$, each of length at least $1$. We switch now to using the notation $\pi^{X \times Y}_X \colon X \times Y \to X$ for the projection, to avoid ambiguity. By the induction hypothesis, the compositions in $\Ho(\cat)$ of each of these two smaller zigzags are $Q(\pi^{X \times X'}_{X'}) \circ Q(\pi^{X \times X'}_X)^{-1}$ and $Q(\pi^{X' \times Y}_Y) \circ Q(\pi^{X' \times Y}_{X'})^{-1}$ respectively. Consider the following commutative diagram in $\cat$
\[\begin{tikzcd}
	&& {X \times Y} \\
	&& {X \times X' \times Y} & {} \\
	X & {X \times X'} & {X'} & {X' \times Y} & Y
	\arrow["{\pi^{X \times X'}_X}", from=3-2, to=3-1]
	\arrow["{\pi^{X \times X'}_{X'}}"', from=3-2, to=3-3]
	\arrow["{\pi^{X' \times Y}_{X'}}", from=3-4, to=3-3]
	\arrow["{\pi^{X' \times Y}_Y}"', from=3-4, to=3-5]
	\arrow["{\pi_{X, X'}}"', from=2-3, to=3-2]
	\arrow["{\pi_{X', Y}}", from=2-3, to=3-4]
	\arrow["{\pi_{X, Y}}", from=2-3, to=1-3]
	\arrow["{\pi^{X \times Y}_X}"', curve={height=18pt}, from=1-3, to=3-1]
	\arrow["{\pi^{X \times Y}_Y}", curve={height=-18pt}, from=1-3, to=3-5]
	\arrow["\simeq", draw=none, from=2-3, to=3-2]
	\arrow["\simeq"', draw=none, from=2-3, to=1-3]
	\arrow["\simeq"', draw=none, from=3-2, to=3-1]
	\arrow["\simeq", curve={height=18pt}, draw=none, from=1-3, to=3-1]
	\arrow["\simeq"', draw=none, from=3-4, to=3-3]
\end{tikzcd}\]
We apply $Q$ to this diagram, and then we can use the resulting diagram in $\Ho(\cat)$ to check that $\alpha = Q(\pi^{X' \times Y}_Y) \circ Q(\pi^{X' \times Y}_{X'})^{-1} \circ Q(\pi^{X \times X'}_{X'}) \circ Q(\pi^{X \times X'}_X)^{-1} = Q(\pi^{X \times Y}_Y) \circ Q(\pi^{X \times Y}_X)^{-1}$ in $\Ho(\cat)$.

We have proven that any set $\Ho(\cat)(X, Y)$ has either zero or one element, and thus $\Ho(\cat)$ is a thin category. Additionally, since $\Ho(\cat)(X, Y)$ is non-empty if and only if $F(X) \leqslant F(Y)$, the functor $\mathpzc{F} \colon \Ho(\cat) \to P$ induced by $F$ is fully faithful.
\end{proof}

\begin{prop}
\label{prophoglposet}
The homotopy category $\Ho(\Ningl)$ of global $\Nin$-operads is a thin category and the functor $\mathpzc{T} \colon \Ho(\Ningl) \to \Igl$ is fully faithful.
\end{prop}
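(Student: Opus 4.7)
The plan is to invoke Lemma~\ref{lemmgeneralhoposet} directly, with $\cat = \Ningl$ and its class $W$ of equivalences of global $\Nin$-operads, the poset $P = \Igl$, and the functor $F = T_{(-)}$ from Construction~\ref{constrT}. Once the four hypotheses of that lemma are verified, both statements of the proposition follow immediately.

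First I would check that $(\Ningl, W)$ is a category with weak equivalences. Isomorphisms are visibly equivalences, and 2-out-of-3 is inherited from the corresponding property for equivalences of symmetric sequences in $\Spc$, since those are the weak equivalences of the model structure referred to in Remark~\ref{remmodelss}. Next, I would verify that $\Ningl$ admits finite products: the forgetful functor from operads in $(\Spc, \boxtimes)$ to symmetric sequences preserves limits, so finite products of global $\Nin$-operads are computed on the underlying symmetric sequences, and Proposition~\ref{propproductsobspc} guarantees that the result is still an $\Nin$-symmetric sequence. That $T_{(-)}$ sends equivalences to identities is exactly Proposition~\ref{propequiNinoperad}, which is how the functor $\mathpzc{T}$ on the homotopy category was defined in Construction~\ref{constrT}.

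The only nontrivial step is the fourth hypothesis: if $T_\Op \subset T_\Pop$ for two global $\Nin$-operads $\Op$ and $\Pop$, then the projection $\pi_\Op \colon \Op \times \Pop \to \Op$ is an equivalence. Here I would use Lemma~\ref{lemmdisjointunion} to deduce that the admissible graph subgroups of a global $\Nin$-operad are entirely determined by its admissible orbits, so the inclusion $T_\Op \subset T_\Pop$ translates into $\FOpHn \subset \FPopHn$ for every compact Lie group $H$ and every $n$. Proposition~\ref{propproductsobspc} then identifies the admissible graph subgroups of $\Op \times \Pop$ with $\FOpHn \cap \FPopHn = \FOpHn$, matching those of $\Op$, and Proposition~\ref{propequininss} concludes that $\pi_\Op$ is an equivalence.

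With all four hypotheses of Lemma~\ref{lemmgeneralhoposet} in hand, the lemma simultaneously yields that $\Ho(\Ningl)$ is thin and that the induced functor $\mathpzc{T} \colon \Ho(\Ningl) \to \Igl$ is fully faithful. I do not anticipate any real obstacle: the whole argument is a bookkeeping assembly of Lemma~\ref{lemmdisjointunion}, Propositions~\ref{propequininss}, \ref{propproductsobspc}, and~\ref{propequiNinoperad}, and Lemma~\ref{lemmgeneralhoposet}; the substantive content has already been established in the preceding sections.
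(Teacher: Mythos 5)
Your proposal is correct and follows essentially the same route as the paper: both verify the hypotheses of Lemma~\ref{lemmgeneralhoposet} for $(\Ningl, W)$ with $F = T_{(-)}$, using that products of operads are computed on underlying symmetric sequences, Proposition~\ref{propproductsobspc} for the product being $\Nin$ with intersected admissible families, and the equivalence criterion to see that $\pi_\Op$ is an equivalence when $T_\Op \subset T_\Pop$. The only cosmetic difference is that you route the last step through Lemma~\ref{lemmdisjointunion} and Proposition~\ref{propequininss}, while the paper cites Proposition~\ref{propequiNinoperad}, which already packages exactly that argument.
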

\begin{proof}
The category of operads in $(\Spc, \boxtimes)$ is complete and the forgetful functor to symmetric sequences in $\Spc$ creates all limits. The equivalences of global $\Nin$-operads satisfy the 2-out-of-3 property because the equivalences of symmetric sequences in $\Spc$ do. Consider two global $\Nin$-operads $\Op$ and $\Pop$ such that $T_\Op \leqslant T_{\Pop}$. By Lemma~\ref{propproductsobspc} $\Op \times \Pop$ is a global $\Nin$-operad with admissible sets $T_\Op \cap T_{\Pop} = T_\Op$, so by Proposition~\ref{propequiNinoperad} the projection $\pi_\Op \colon \Op \times \Pop \to \Op$ is an equivalence. Then by Lemma~\ref{lemmgeneralhoposet} the homotopy category $\Ho(\Ningl)$ is thin and the functor $\mathpzc{T} \colon \Ho(\Ningl) \to \Igl$ is fully faithful.
\end{proof}

\begin{rem}
In the context of $G$-equivariant $\Nin$-operads, Blumberg and Hill prove in \cite[Proposition 5.5]{BLUMBERG2015658} a stronger result than Proposition~\ref{prophoglposet}. They check that the derived mapping space from any $G$-operad to a $G$-equivariant $\Nin$-operad is either weakly contractible or empty, while in the globally equivariant setting Proposition~\ref{prophoglposet} works only on the level of the homotopy category of global $\Nin$-operads.

We expect that the stronger statement about global $\Nin$-operads analogous to \cite[Proposition 5.5]{BLUMBERG2015658} is also true, but proving it might require more technical machinery.
\end{rem}

\section{Constructing global \texorpdfstring{$\Nin$-operads}{N-infinity}}
\label{sectionexamples}
In this section we prove our main result, Theorem~\ref{thmmain}, that states that the functor $\mathpzc{T} \colon \Ho(\Ningl) \to \Igl$ defined in Construction~\ref{constrT} is an equivalence between the homotopy category of $\Nin$-operads and the poset of global transfer systems. To do that, we will use Proposition~\ref{propGtoglobalnin} to construct examples of global $\Nin$-operads starting from $\Nin$-operads for a single group. We start with some technical results.

First of all, we need to relate orthogonal spaces and $G$-spaces. For each compact Lie group $G$ we have the \emph{underlying $G$-space} functor $(-)(\Ug) \colon \Spc \to \GTopcat$ given by
\[X(\Ug) = \colim_{V \in s(\Ug)} X(V)\]
where $s(\Ug)$ is the poset of finite-dimensional subrepresentations of our chosen complete $G$-universe $\Ug$. This functor has a right adjoint $R_G \colon \GTopcat \to \Spc$ given by
\[R_G(Y)(V) = \Map^G(\Lcat(V, \Ug), Y)\]
for each $V \in \Lcat$. Here $\Lcat(V, \Ug)$ denotes the space of linear isometric embeddings from $V$ to $\Ug$, with $G$-action induced by the $G$-action on $\Ug$. For details on this construction see \cite[Construction~1.2.25]{global}.

\begin{lemm}
\label{lemmURstrong}
The functors $(-)(\Ug)$ and $R_G$ are strong symmetric monoidal with respect to the symmetric monoidal structures given by the categorical products in $\Spc$ and $\GTopcat$.
\end{lemm}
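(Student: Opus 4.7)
The plan relies on a standard principle: between categories equipped with cartesian monoidal structure, a functor is strong symmetric monoidal if and only if it preserves finite products (binary products and the terminal object); the structure isomorphisms are then the canonical comparison maps, and the coherence axioms follow automatically from the universal property of products. So I only have to check product-preservation for each functor separately.

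For $R_G$ there is essentially nothing to do: $R_G$ is right adjoint to $(-)(\Ug)$, hence preserves all limits, and in particular binary products and the terminal object. One could also read this off directly from $R_G(Y)(V) = \Map^G(\Lcat(V, \Ug), Y)$, since $\Map^G(A, -)$ preserves products and the one-point space and the product in $\Spc$ is computed objectwise.

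For $(-)(\Ug)$ the terminal object is trivial, as a filtered colimit of one-point spaces is a one-point space. For binary products the key step is to show the natural comparison
\[
(X \times Y)(\Ug) = \colim_{V \in s(\Ug)} \bigl(X(V) \times Y(V)\bigr) \longrightarrow \bigl(\colim_V X(V)\bigr) \times \bigl(\colim_W Y(W)\bigr) = X(\Ug) \times Y(\Ug)
\]
is a $G$-homeomorphism. For this I would use two ingredients: first, the poset $s(\Ug)$ is directed (take $V+W$), hence filtered, and the diagonal $s(\Ug) \to s(\Ug) \times s(\Ug)$ is cofinal, again because of closure under sums; second, finite products commute with filtered colimits in $\Topcat$. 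Combining these gives the isomorphism, and it is manifestly $G$-equivariant since the $G$-action at every level is the diagonal one. I do not foresee a real obstacle; the only technical input worth noting is the commutation of finite products with filtered colimits in compactly generated weak Hausdorff spaces, which is the reason we work in $\Topcat$ in the first place.
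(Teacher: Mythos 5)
Your proof is correct and takes essentially the same approach as the paper: $R_G$ preserves products because it is a right adjoint, and $(-)(\mathcal{U}_G)$ preserves them because finite products in $\Topcat$ commute with the colimit defining the underlying $G$-space. The only cosmetic difference is that the paper replaces the filtered poset $s(\mathcal{U}_G)$ by a cofinal nested sequence and invokes commutation with sequential colimits, whereas you argue directly with the filtered colimit and cofinality of the diagonal.
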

\begin{proof}
The functor $R_G$ is strong symmetric monoidal because it is a right adjoint, so it preserves products.

The functor $(-)(\Ug)$ can be realized as a sequential colimit indexed by a nested sequence of finite-dimensional subrepresentations of $\Ug$ that cover $\Ug$. Limits in $\Spc$ are computed in $\Topcat$, and in $\Topcat$ finite products commute with sequential colimits, so $(-)(\Ug)$ is strong symmetric monoidal.
\end{proof}

Note that the functor $(-)(\Ug)$ is not lax monoidal if considered with the monoidal structure given by the box product. It is however oplax symmetric monoidal.

\begin{rem}
\label{remNintimes}
By Lemma~\ref{lemmURstrong}, for each $G$ the underlying $G$-operad of an operad in $(\Spc, \times)$ (with respect to the categorical product on $\Spc$) is an operad in $G$-spaces. If an operad $\Op$ in $(\Spc, \times)$ is $\Sigma_n$-free and its underlying symmetric sequence in $\Spc$ is $\Nin$ and closed, then its underlying $G$-operad is an $\Nin$-operad in the sense of \cite{BLUMBERG2015658} (see Remark~\ref{remNinunderlying}). Note that Blumberg and Hill defined $\Nin$-operads for a finite group $G$, and Gutiérrez and White extended the definition to include compact Lie groups. Since the functor $(-)(\Ug)$ is not lax monoidal from $(\Spc, \boxtimes)$ to $(\GTopcat, \times)$, the underlying $G$-spaces of an operad in $(\Spc, \boxtimes)$ (with respect to the box product) do \emph{not} generally form an operad.
\end{rem}

Let $G$ and $H$ be compact Lie groups, we use $\GHTopcat$ to denote the category of $(G \times H)$-spaces. The model structure on $\GHTopcat$ that one usually works with in equivariant homotopy theory is the "genuine" model structure. In it, weak equivalences are constructed with respect to all subgroups of $G \times H$. But there are more model structures on $\GHTopcat$ that one can consider. In fact, for each set $F$ of closed subgroups of $G \times H$ there is a model structure on $\GHTopcat$ where the weak equivalences (respectively the fibrations) are the $(G \times H)$-equivariant maps such that restricting to the fixed points of any subgroup in $F$ yields a weak homotopy equivalence (respectively a Serre fibration). For example if one takes $F$ to be the set containing only the trivial subgroup, one obtains the "naive" equivariant model structure. An explicit construction of these model structures can be found in \cite[Proposition B.7]{global}.

In this section we will consider the model structure on $\GHTopcat$ given by the graph subgroups, the one obtained by taking $F=\fat(G, H)$. We call this model structure the $\fat(G, H)$-model structure.

We now look at the bifunctor of "$(G \times H)$-equivariant maps"
\begin{equation}
\label{eqmap}
    \Map^{G \times H}(-, -) \colon \GHTopcat^\opp \times \GHTopcat \to \Topcat.
\end{equation}
Together with the bifunctor
\begin{equation}
\label{eqtimes}
    - \times - \colon \GHTopcat \times \Topcat \to \GHTopcat
\end{equation}
it forms an \emph{adjunction of two variables}, see for example \cite[Definition 4.1.12]{hovey2007model} for the definition. 

\begin{lemm}
\label{lemmadjtwovariables}
The adjunction given by (\ref{eqmap}) and (\ref{eqtimes}) is a \emph{Quillen adjunction of two variables}, as defined in \cite[Definition 4.2.1]{hovey2007model}, for the $\fat(G, H)$-model structure in $\GHTopcat$, and the Quillen-Serre model structure on $\Topcat$.
\end{lemm}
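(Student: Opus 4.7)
The plan is to verify the pushout-product axiom directly. By \cite[Corollary~4.2.5]{hovey2007model}, since all three model categories involved are cofibrantly generated, it suffices to check the pushout-product axiom on generating (trivial) cofibrations. According to \cite[Proposition~B.7]{global}, the $\fat(G, H)$-model structure on $\GHTopcat$ is cofibrantly generated, with generating cofibrations $(G \times H)/\Gamma \times (S^{n-1} \hookrightarrow D^n)$ and generating trivial cofibrations $(G \times H)/\Gamma \times (D^n \hookrightarrow D^n \times [0, 1])$, indexed by $\Gamma \in \fat(G, H)$ and $n \geqslant 0$. The Quillen--Serre structure on $\Topcat$ has the familiar generators $S^{m-1} \hookrightarrow D^m$ and $D^m \hookrightarrow D^m \times [0, 1]$.

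The key observation is the following. Given a $(G \times H)$-equivariant map $f \colon A \to B$ and a continuous map $g \colon X \to Y$ (with $X, Y$ regarded as $(G \times H)$-spaces with trivial action so that the product lands in $\GHTopcat$ via (\ref{eqtimes})), and any closed subgroup $\Gamma \leqslant G \times H$, one has a natural homeomorphism
\[(f \Box g)^\Gamma \;\cong\; f^\Gamma \Box g\]
in $\Topcat$, because taking $\Gamma$-fixed points commutes with products and with pushouts along closed cofibrations. Thus the $\Gamma$-fixed points of the pushout-product reduce to the ordinary pushout-product in $\Topcat$, where the Quillen--Serre structure satisfies the pushout-product axiom by a classical result.

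For the explicit check on generators, the pushout-product of $(G\times H)/\Gamma \times (S^{n-1} \hookrightarrow D^n)$ with $S^{m-1} \hookrightarrow D^m$ is homeomorphic, as a pair, to $(G \times H)/\Gamma \times (S^{n+m-1} \hookrightarrow D^{n+m})$, a single generating cofibration of the $\fat(G, H)$-model structure. If instead one of the two generators is a trivial cofibration, then one factor of the pushout-product is the inclusion of the initial endpoint $D^n \hookrightarrow D^n \times [0, 1]$, which is a (strong) deformation retract. Smashing this with the rest and then taking $(-)^\Gamma$ preserves the deformation retract on $\Topcat$-level, so the result is a trivial cofibration in $\GHTopcat$.

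There is no substantial obstacle here: the proof is formal once one identifies the right generators and observes that the trivial-action side of $- \times -$ allows $(-)^\Gamma$ to be pulled past the pushout-product. The only care needed is to ensure that the pushouts defining $f \Box g$ are along closed inclusions, which is automatic for the CW-type generators above, so that $(-)^\Gamma$ really does commute with forming $f \Box g$.
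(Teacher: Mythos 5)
Your proof is correct and follows essentially the same route as the paper: reduce the pushout-product axiom to the generating (acyclic) cofibrations via Hovey, and identify the pushout-product of cell generators explicitly, using that $i_n \Box i_m \cong i_{n+m}$ and that pairing with $j_m$ gives a generating acyclic cofibration. The fixed-point commutation remark is harmless but not needed once the generator computation is done; the paper handles the acyclic case by the direct identification $i_l \Box j_{l'} \cong j_{l+l'}$ rather than your deformation-retract argument, which is only a cosmetic difference.
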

\begin{proof}
By definition we have to check that given cofibrations $f \colon A \to B$ in $\GHTopcat$ and $g \colon X \to Y$ in $\Topcat$, their pushout product with respect to the functor $- \times -$,
\[f \square g \colon B \times X \coprod_{A \times X} A \times Y \to B \times Y\]
is a cofibration in $\GHTopcat$, and is acyclic if any of $f$ or $g$ are acyclic. By~\cite[Lemma 4.2.4]{hovey2007model}, it is enough to check this on the generating (acyclic) cofibrations.

Generating (acyclic) cofibrations of $\Topcat$ are the inclusions $i_l \colon \partial D^l \to D^l$ (respectively $j_l \colon D^l \to D^l \times [0, 1]$) for $l \geqslant 0$. A generating (acyclic) cofibration of the $\fat(G, H)$-model structure has the form $(G \times H)/\Delta \times i_l$ (respectively $(G \times H)/\Delta \times j_l$) with $\Delta \in \fat(G, H)$ a graph subgroup of $G \times H$. The pushout product $i_l \square i_{l'}$ is homeomorphic to $i_{l + l'}$, and similarly $i_l \square j_{l'}$ is homeomorphic to $j_{l + l'}$. Therefore in this case the pushout product of two generating cofibrations is a generating cofibration in the $\fat(G, H)$-model structure, and the pushout product of a generating cofibration and a generating acyclic cofibration is a generating acyclic cofibration, so the adjunction is a Quillen adjunction of two variables.
\end{proof}

\begin{lemm}
\label{lemmLgraphcof}
Let $G$ and $H$ be compact Lie groups, and let $W$ be a faithful $H$-representation. The $(G \times H)$-space $\Lcat(W, \Ug)$ is cofibrant in the $\fat(G, H)$-model structure.
\end{lemm}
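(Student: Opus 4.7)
The plan is to exhibit $\Lcat(W, \Ug)$ as a $(G \times H)$-CW complex all of whose cells have the form $(G \times H)/\Delta \times D^l$ with $\Delta \in \fat(G, H)$; such a CW-complex is automatically cofibrant in the $\fat(G, H)$-model structure by standard generating cofibration arguments.

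First I would verify the isotropy condition: for every linear isometric embedding $\phi \colon W \to \Ug$, the $(G \times H)$-stabilizer of $\phi$ (under $(g, h) \cdot \phi = g \circ \phi \circ h^{-1}$) is a graph subgroup. Indeed, if $(e_G, h)$ stabilizes $\phi$, then $\phi = \phi \circ h^{-1}$, and injectivity of $\phi$ forces $h^{-1} w = w$ for all $w \in W$, so $h$ acts trivially on $W$. Since $W$ is a faithful $H$-representation, $h = e_H$. Thus every isotropy group meets $\{e_G\} \times H$ only in the identity.

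Second, I would exploit the fact that $\Ug$ is a complete $G$-universe to write $\Ug$ as the union of a countable chain $V_1 \subset V_2 \subset \cdots$ of finite-dimensional $G$-subrepresentations; this gives a sequential colimit
\[\Lcat(W, \Ug) = \colim_n \Lcat(W, V_n).\]
Each $\Lcat(W, V_n)$ is a compact smooth Stiefel-type manifold on which $G \times H$ acts smoothly (via $V_n$ and $W$ respectively), and the inclusions $\Lcat(W, V_n) \hookrightarrow \Lcat(W, V_{n+1})$ are smooth closed $(G \times H)$-equivariant embeddings.

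Third, I would invoke Illman's equivariant triangulation theorem (in its relative version) to produce compatible $(G \times H)$-CW structures on each $\Lcat(W, V_n)$ in which the inclusion $\Lcat(W, V_n) \hookrightarrow \Lcat(W, V_{n+1})$ is the inclusion of a $(G \times H)$-subcomplex. The cells of each such CW structure are of the form $(G \times H)/\Delta \times D^l$ for $\Delta$ the isotropy of a point, and by the first step every such $\Delta$ lies in $\fat(G, H)$. Passing to the colimit, $\Lcat(W, \Ug)$ inherits a $(G \times H)$-CW structure with all cells of graph type, which is precisely the kind of cell complex built from the generating cofibrations of the $\fat(G, H)$-model structure, so $\Lcat(W, \Ug)$ is cofibrant.

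The main obstacle is the compatible choice of equivariant triangulations along the filtration, ensuring that the CW structures extend one another; this is precisely the content of the relative form of Illman's theorem and is the standard tool used to handle spaces of linear isometries in the global setting (compare with the treatment in \cite[Chapter~1]{global}). The isotropy calculation is essentially where the hypothesis that $W$ is faithful enters, and it is what guarantees that the cells are of graph type rather than arbitrary type.
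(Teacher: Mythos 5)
Your isotropy computation is exactly where faithfulness enters, and it coincides with the key observation in the paper's proof: the $H$-action on $\Lcat(W, \Ug)$ is free, so only graph subgroups can occur as isotropy groups and hence as cell types. Where you genuinely diverge is in how the cell structure is obtained. The paper does not rebuild it: it quotes \cite[Proposition 1.1.19 ii)]{global} to get that $\Lcat(W, \Ug)$ is cofibrant in the genuine model structure on $\GHTopcat$, i.e.\ a retract of a generalized $(G \times H)$-CW complex, and then uses the freeness argument to see that every cell occurring in such a complex mapping to $\Lcat(W,\Ug)$ must be of graph type, so the same cell structure witnesses cofibrancy in the $\fat(G,H)$-model structure. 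Your route instead reconstructs a cell structure from scratch from the filtration $\Lcat(W,\Ug) = \colim_n \Lcat(W,V_n)$ by compact Stiefel-type manifolds together with Illman's triangulation theorem; in effect you are reproving the quoted result of \cite{global} with the isotropy bookkeeping added, which is more self-contained but also more work.

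One step is not justified as written: the claim that the \emph{relative} form of Illman's theorem produces triangulations of the $\Lcat(W,V_{n+1})$ extending the previously chosen CW structure on $\Lcat(W,V_n)$. The pair version of Illman's theorem triangulates a smooth $(G \times H)$-manifold pair so that the submanifold is a subcomplex, but it does not allow you to prescribe the triangulation of the submanifold in advance, so iterating it along the infinite tower does not obviously yield structures that extend one another; extending a given equivariant triangulation is a substantially stronger (and later) result of Illman. The gap is repairable without that stronger input: you only need each pair $(\Lcat(W,V_{n+1}), \Lcat(W,V_n))$ to be a \emph{relative} $(G\times H)$-cell complex, which the pair version does give, and whose relative cells have graph-type isotropy by your first step. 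Then each inclusion $\Lcat(W,V_n) \to \Lcat(W,V_{n+1})$ is a cofibration in the $\fat(G,H)$-model structure, the bottom of the tower is cofibrant (it is either empty or a compact manifold carrying a graph-type CW structure by Illman), and a sequential colimit of cofibrations starting from a cofibrant object is cofibrant. With that adjustment, or by simply citing the genuine cofibrancy statement and upgrading it as the paper does, your argument goes through.
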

\begin{proof}
By \cite[Proposition 1.1.19 ii)]{global} the space $\Lcat(W, \Ug)$ is cofibrant in the genuine model structure on $\GHTopcat$  given by all subgroups. This means that it is a retract of a generalized $(G \times H)$-CW-complex, built out of cells of the form $(G \times H)/\Delta \times D^l$, for subgroups $\Delta \leqslant G \times H$ and $l \geqslant 0$.

Since $W$ is a faithful $H$-representation, the $H$-action on $\Lcat(W, \Ug)$ is free. For each cell $(G \times H)/\Delta \times D^l$ that appears in the decomposition of $\Lcat(W, \Ug)$, there is a $(G \times H)$-equivariant map
\[f \colon (G \times H)/\Delta \times D^l \to \Lcat(W, \Ug).\]
Since $\Lcat(W, \Ug)$ is $H$-free, this means that $\Delta$ is a graph subgroup of $G \times H$. Indeed, if $(e_G, h) \in \Delta$, then $(e_G, e_H) \times \{0\} \in (G \times H)/\Delta \times D^l$ is fixed by $(e_G, h)$. Thus $f((e_G, e_H) \times \{0\}) \in \Lcat(W, \Ug)$ is also fixed by $(e_G, h)$, which means that $h = e_H$. Since only graph subgroups $\Delta \in \fat(G, H)$ can appear in the cell decomposition of $\Lcat(W, \Ug)$, it is also cofibrant in the $\fat(G, H)$-model structure.
\end{proof}

The following proposition uses the functor $R_G$ introduced at the beginning of this section to produce  examples of global $\Nin$-operads and $\Nin$-symmetric sequences in $\Spc$.

\begin{prop}
\label{propGtoglobalnin}
Consider a compact Lie group $G$, and an $\Nin$-symmetric sequence $\{\Opn\}_{n \in \NN}$ in $\GTopcat$ with associated family of admissible graph subgroups $\FOpGn$ for each $n \geqslant 0$ (in the sense of \cite[Definition 4.1]{BLUMBERG2015658}, i. e. such that each $\Op_n$ is $\Sigma_n$-free and a universal space for the family $\FOpGn$). Then $R_G(\Op) \defeq \{R_G(\Opn)\}_{n \in \NN}$ is a fibrant $\Nin$-symmetric sequence in $\Spc$ with associated sets of admissible graph subgroups
\[\FRGOpKn = \{\Gamma_\phi \in \fat(K, \Sigma_n) \; \text{with} \; \phi \colon H \to \Sigma_n \mid \forall \theta \colon  L \to H \; \text{with} \; L \leqslant G , \Gamma_{\phi \circ \theta} \in \FOpGn\}.\]
\end{prop}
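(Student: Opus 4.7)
The plan is to identify each fixed-point space $R_G(\Op_n)(V)^{\Gamma_\phi}$ with a space of equivariant maps, and then apply the obstruction theory afforded by the Quillen adjunction of two variables of Lemma~\ref{lemmadjtwovariables}. For a $K$-representation $V$ and a graph subgroup $\Gamma_\phi \in \fat(K, \Sigma_n)$ associated to $\phi \colon H \to \Sigma_n$ with $H \leqslant K$, I would first unwind the definition of $R_G$ and the $(K \times \Sigma_n)$-action on $R_G(\Op_n)(V)$ to obtain
\[
R_G(\Op_n)(V)^{\Gamma_\phi} \;\cong\; \Map^{G \times H}\bigl(\Lcat(V, \Ug),\, \phi^\ast \Op_n\bigr),
\]
where $\phi^\ast \Op_n$ denotes $\Op_n$ equipped with the $(G \times H)$-action combining its original $G$-action with the $H$-action pulled back along $\phi$, and $\Lcat(V, \Ug)$ carries the $(G \times H)$-action in which $G$ acts on $\Ug$ and $H$ acts via $H \leqslant K$ on $V$.

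The next step is to analyze both sides of this mapping space in the $\fat(G, H)$-model structure on $\GHTopcat$. A direct computation shows that for any graph subgroup $\Gamma_\theta \in \fat(G, H)$ coming from $\theta \colon L \to H$ with $L \leqslant G$, one has $(\phi^\ast \Op_n)^{\Gamma_\theta} = \Op_n^{\Gamma_{\phi \circ \theta}}$, which is empty when $\Gamma_{\phi \circ \theta} \notin \FOpGn$ and weakly contractible otherwise, since $\Op_n$ is a universal $\FOpGn$-space. On the other side, assuming $V$ is faithful as a $K$-representation, Lemma~\ref{lemmLgraphcof} shows that $\Lcat(V, \Ug)$ is cofibrant in the $\fat(G, H)$-model structure; moreover, for each $\Gamma_\theta$ as above, $\Lcat(V, \Ug)^{\Gamma_\theta}$ is the space of $L$-equivariant linear isometric embeddings from $V$ (with $L$-action via $\theta$) into $\Ug$ (with $L$-action via $L \leqslant G$). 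This space is non-empty by \cite[III Theorem 4.5]{diecklie}, which lets one embed $V$ as an $L$-representation into the restriction of some $G$-representation, which itself embeds in $\Ug$, and it is contractible by the standard fact that equivariant embedding spaces into a complete universe are contractible when non-empty.

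Combining these ingredients via Lemma~\ref{lemmadjtwovariables}: if the stated condition on $\Gamma_\phi$ holds, then $\phi^\ast \Op_n$ has contractible $\Delta$-fixed points for every $\Delta \in \fat(G, H)$, so the augmentation $\phi^\ast \Op_n \to \ast$ is a weak equivalence between $\fat(G, H)$-fibrant objects; applying $\Map^{G \times H}(\Lcat(V, \Ug), -)$, which sends such weak equivalences to weak equivalences because the source is cofibrant, shows that $R_G(\Op_n)(V)^{\Gamma_\phi}$ is contractible and in particular non-empty. Conversely, if the condition fails for some $\theta$, then $(\phi^\ast \Op_n)^{\Gamma_\theta} = \emptyset$ while $\Lcat(V, \Ug)^{\Gamma_\theta}$ is non-empty for every $V$, so evaluating a putative equivariant map at a $\Gamma_\theta$-fixed embedding yields a contradiction, forcing $R_G(\Op_n)(V)^{\Gamma_\phi}$ to be empty for all $V$. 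This pins down $\FRGOpKn$ as claimed. Fibrancy of $R_G(\Op)$ in the sense of Remark~\ref{remmodelssfib} is another consequence of the same Quillen adjunction: an embedding $V \to W$ of faithful $K$-representations induces a $\fat(G, H)$-weak equivalence $\Lcat(W, \Ug) \to \Lcat(V, \Ug)$ between cofibrant objects, since all graph-subgroup fixed points are contractible on both sides, so the induced map on mapping spaces is a weak equivalence. Combining fibrancy with the contractibility of the admissible fixed-point spaces, Lemma~\ref{lemmfibrantss} concludes that $R_G(\Op)$ is $\Nin$.

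The main technical hurdle is the $(G \times H)$-homotopy type of $\Lcat(V, \Ug)$ for $V$ a faithful $K$-representation: one must show that every graph subgroup $\Gamma_\theta \in \fat(G, H)$ has non-empty, contractible fixed-point subspace, which requires the representation-theoretic fact that the restriction of a complete $G$-universe to any closed subgroup $L \leqslant G$ contains every $L$-representation. After this, the rest of the argument is a formal consequence of the Quillen adjunction of two variables and Lemma~\ref{lemmfibrantss}.
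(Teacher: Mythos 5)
Your proposal is correct and follows essentially the same route as the paper: the identification $R_G(\Op_n)(V)^{\Gamma_\phi} \cong \Map^{G\times H}(\Lcat(V,\Ug),\phi^\ast\Op_n)$, the tom Dieck embedding argument producing a $\Gamma_\theta$-fixed point of $\Lcat(V,\Ug)$ for the emptiness direction, and cofibrancy of $\Lcat(V,\Ug)$ (Lemma~\ref{lemmLgraphcof}) combined with the two-variable Quillen adjunction for the contractibility direction. The only divergence is the fibrancy step, where you deduce it from the (true, standard, but not needed by the paper) fact that the graph-subgroup fixed points of $\Lcat(V,\Ug)$ are contractible, making $\Lcat(W,\Ug)\to\Lcat(V,\Ug)$ a weak equivalence between cofibrant objects, whereas the paper simply reads off fibrancy from the fixed-point spaces it has already shown to be either empty or weakly contractible; both arguments are valid.
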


Note that $\Sigma_n$-freeness is part of Blumberg and Hill's definition of $\Nin$-symmetric sequence, but it is not actually required in the proof of this proposition.

\begin{proof}
For each $W \in \Lcat$, the $\Sigma_n$-orthogonal space $R_G(\Opn)$ evaluated at $W$ is given by
\[R_G(\Opn)(W) = \Map^G(\Lcat(W, \Ug), \Opn).\]

Here $\Ug$ is a chosen complete $G$-universe, and we are considering the space of $G$-equivariant maps from $\Lcat(W, \Ug)$ to $\Opn$. The space $R_G(\Opn)(W)$ is always non-empty, as according to \cite[Definition 4.1]{BLUMBERG2015658} the space $\Op_n$ always has non-empty $G$-fixed points. The $\Sigma_n$-action on $\Opn$ induces a $\Sigma_n$-action on $R_G(\Opn)(W)$. A linear isometric embedding $\psi \colon W \to W'$ induces a map $\Lcat(W', \Ug) \to \Lcat(W, \Ug)$ by precomposition, and therefore a map $R_G(\Opn)(W) \to R_G(\Opn)(W')$, which gives $R_G(\Opn)$ the orthogonal space structure.

Now let $K$ be a compact Lie group, let $W$ be a $K$-representation, and let $n \geqslant 0$. By the previous paragraph the left $K$-action on $R_G(\Opn)(W)$ comes from the right $K$-action on $\Lcat(W, \Ug)$. Then $R_G(\Opn)(W)$ is a $(K \times \Sigma_n)$-space. Consider a graph subgroup $\Gamma_\phi \in \fat(K, \Sigma_n)$ associated to some homomorphism $\phi \colon H \to \Sigma_n$ with $H \leqslant K$. We need to compute the $\Gamma_\phi$-fixed points $R_G(\Opn)(W)^{\Gamma_\phi}$.

We consider $W$ as an $H$-representation by restricting the $K$-action. Then there is a homeomorphism
\[(\Map^G(\Lcat(W, \Ug), \Opn))^{\Gamma_\phi} \cong \Map^{G \times H}(\Lcat(W, \Ug), \phi^\ast(\Opn))\]
where $\phi^\ast(\Opn)$ is the $(G \times H)$-space obtained from pulling back the $\Sigma_n$-action through $\phi$.

We first check that the sets of admissible graph subgroups of $R_G(\Op)$ are indeed the $\FRGOpKn$ given in the statement of the theorem, i. e. that $\Gamma_\phi \in \fat(K, \Sigma_n)$ belongs to $\FRGOpKn$ if and only if there exists a $K$-representation $V$ such that $R_G(\Opn)(V)^\Gamma$ is non-empty. Assume that a graph subgroup $\Gamma_\phi \in \fat(K, \Sigma_n)$ with $\phi \colon H \to \Sigma_n$ does not belong to $\FRGOpKn$. This means that there is some subgroup $L \leqslant G$ and homomorphism $\theta \colon L \to H$ such that $\Gamma_{\phi \circ \theta} \notin \FOpGn$. Let $W$ be any $K$-representation. Pull back the restricted $H$-action on $W$ through $\theta$ to get an $L$-representation $\theta^\ast W$. By \cite[III Theorem 4.5]{diecklie} $\theta^\ast W$ embeds $L$-equivariantly into some $G$-representation, therefore it also embeds $L$-equivariantly into $\Ug$ via some $\psi \colon \theta^\ast W \to \Ug$. This means that $\psi \in \Lcat(W, \Ug)$ is fixed by $\Gamma_\theta \leqslant G \times H$. We will check by contradiction that
\[R_G(\Opn)(W)^{\Gamma_\phi} \cong \Map^{G \times H}(\Lcat(W, \Ug), \phi^\ast(\Opn))\]
is empty. Assume that there is some $(G \times H)$-equivariant map $f \colon \Lcat(W, \Ug) \to \phi^\ast(\Opn)$. Then $f(\psi) \in \phi^\ast(\Opn)$ is also fixed by $\Gamma_\theta$. However
\[(\phi^\ast(\Opn))^{\Gamma_\theta} = \Opn^{\Gamma_{\phi \circ \theta}} = \emptyset\]
because $\Gamma_{\phi \circ \theta} \notin \FOpGn$, giving a contradiction. Thus $R_G(\Opn)(W)^{\Gamma_\phi}$ is empty for any $K$-representation $W$.

To complete the proof we need to check that $R_G(\Opn)(W)^{\Gamma_\phi}$ is weakly contractible for any $\Gamma_\phi \in \FRGOpKn$ and any faithful $K$-representation $W$. Let $\Gamma_\phi$ be the graph of $\phi \colon H \to \Sigma_n$ with $H \leqslant K$. Then $W$ is also faithful as an $H$-representation. Thus by Lemma~\ref{lemmLgraphcof} the $(G \times H)$-space $\Lcat(W, \Ug)$ is cofibrant in the $\fat(G, H)$-model structure.

We want to compute
\[R_G(\Opn)(W)^{\Gamma_\phi} \cong \Map^{G \times H}(\Lcat(W, \Ug), \phi^\ast(\Opn)).\]
For any graph subgroup $\Gamma_\theta \in \fat(G, H)$,
\[(\phi^\ast(\Opn))^{\Gamma_\theta} = \Opn^{\Gamma_{\phi \circ \theta}} \simeq \ast\]
because $\Gamma_{\phi \circ \theta} \in \FOpGn$ since $\Gamma_\phi \in \FRGOpKn$. This means that $\phi^\ast(\Opn) \to \ast$ is an acyclic fibration in the $\fat(G, H)$-model structure. By~\cite[Lemma 4.2.2]{hovey2007model} since $\Lcat(W, \Ug)$ is cofibrant in the $\fat(G, H)$-model structure, $\Map^{G \times H}(\Lcat(W, \Ug), \phi^\ast(\Opn)) \to \ast$ is an acyclic Serre fibration. So $R_G(\Opn)(W)^{\Gamma_\phi}$ is weakly contractible. In particular it is non-empty, so if $\Gamma_\phi \in \FRGOpKn$ then picking any faithful $K$-representation $W$ shows that $\Gamma_\phi$ is admissible for $R_G(\Op)$. This completes the check that the $\FRGOpKn$ of the statement of the proposition are the sets of admissible graph subgroups of $R_G(\Op)$.

We check that $R_G(\Op)$ satisfies Definition~\ref{defiNin} and is thus a global $\Nin$-symmetric sequence in $\Spc$. Given any $K$-representation $V$, it embeds into a faithful $K$-representation $W$ via some $\psi \colon V \to W$. As we just saw, $R_G(\Opn)(W)^{\Gamma_\phi}$ is weakly contractible, so any map $\alpha \colon \partial D^l \to R_G(\Opn)(V)^{\Gamma_\phi}$ can be filled with a map $\lambda \colon D^l \to R_G(\Opn)(W)^{\Gamma_\phi}$ such that $\lambda \circ i_l = R_G(\Opn)(\psi)^{\Gamma_\phi} \circ \alpha$.

We check that $R_G(\Op)$ is a fibrant symmetric sequence in $\Spc$ (in the sense of Remark~\ref{remmodelssfib}, objectwise fibrant). We first note that if $\Gamma_\phi \notin \FRGOpKn$, we saw earlier in this proof that $R_G(\Opn)(W)^{\Gamma_\phi} = \emptyset$ for each $W$. If $\Gamma_\phi \in \FRGOpKn$, then we just saw that $R_G(\Opn)(W)^{\Gamma_\phi}$ is weakly contractible for any faithful $W$. Therefore for any embedding $\psi \colon W \to W'$ of faithful $K$-representations the map $R_G(\Opn)(\psi)^{\Gamma_\phi}$ is a weak homotopy equivalence.
\end{proof}

We recall here the notion of a $G$-transfer system for $G$ a compact Lie group, see \cite[Definition 3.4]{Rubin2} for the definition for finite groups. Let $\Sub(G)$ be the set of closed subgroups of $G$. A $G$-transfer system is a partial order on $\Sub(G)$ which refines the closed finite index subgroup relation and is closed under conjugation and restriction to subgroups. This definition is equivalent to that of a $G$-indexing system (see Remark~\ref{remindextransfer}).

Recall from the introduction the strategy of the proof of our main result. Blumberg and Hill originally conjectured that for a finite group $G$ there is an $\Nin$-operad representing each $G$-indexing system. This conjecture was separately proven by Rubin in \cite{rubin1}, Gutiérrez and White in \cite{GutierrezWhite}, and Bonventre and Pereira in \cite{BonventrePereira}, using three different methods. Gutiérrez and White proved it for $G$ a compact Lie group and not just a finite group. We will use their result to construct examples of $\Nin$-operads for a single compact Lie group $G$, then we will apply the functor $R_G$ to obtain global $\Nin$-operads with global transfer systems prescribed by Proposition~\ref{propGtoglobalnin}, and we then take products of these. In this way, we will construct in Theorem~\ref{thmmain} a global $\Nin$-operad representing any global transfer system.

First we give for $G$ a compact Lie group, an adjunction between $\Igl$, the poset of global transfer systems under inclusion, and $\IG$, the poset of $G$-transfer systems under inclusion. Note that transfer systems are partial orders themselves (on $\Lie$ or on $\Sub(G))$, and the order in the sets of transfer systems is the inclusion relation, $T \subset T'$ if $K \leqslant_T H$ implies that $K \leqslant_{T'} H$.

\begin{prop}
\label{propadjunctionG}
Let $G$ be a compact Lie group. The functor $R_G \colon \IG \to \Igl$ given by
\[K \leqslant_{R_G(T)} H \; \text{if} \quad K \leqslant H \; \text{is closed and of finite index and} \; \forall \theta \colon  L \to H \; \text{with} \; L \leqslant G , \; \theta^{-1}(K) \leqslant_T L\]
is right adjoint to the functor $U_G \colon \Igl \to \IG$ which sends a global transfer system $I'$ to its underlying $G$-transfer system, which is given by the restriction from $\Lie$ to $\Sub(G)$.
\end{prop}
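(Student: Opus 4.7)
The plan is to unpack both functors as Galois-connection data between the two posets $\Igl$ and $\IG$, verify well-definedness of each side, and then prove the biconditional $U_G(T') \subset T \iff T' \subset R_G(T)$ directly.

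First I would verify that $R_G(T)$ is actually a global transfer system whenever $T$ is a $G$-transfer system. Refinement of the finite-index subgroup relation is built into the definition; reflexivity follows because for $K=H$ and any $\theta\colon L\to H$ with $L\leqslant G$, we have $\theta^{-1}(K)=L \leqslant_T L$. For transitivity, if $K\leqslant_{R_G(T)} H$ and $H \leqslant_{R_G(T)} J$ and $\theta\colon L\to J$ has $L\leqslant G$, then $\theta^{-1}(H)\leqslant_T L$ by the second hypothesis, and applying the first hypothesis to $\theta|_{\theta^{-1}(H)}\colon \theta^{-1}(H)\to H$ (whose domain is a subgroup of $G$) gives $\theta^{-1}(K)\leqslant_T \theta^{-1}(H)$; transitivity of $T$ closes this up. Antisymmetry is automatic from the index refinement. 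For the pullback condition of Definition~\ref{defitransfer}, given $K\leqslant_{R_G(T)} H$ and any continuous homomorphism $\alpha\colon G'\to H$, any $\theta\colon L\to G'$ with $L\leqslant G$ yields $\theta^{-1}(\alpha^{-1}(K)) = (\alpha\circ\theta)^{-1}(K)\leqslant_T L$ by applying the hypothesis to $\alpha\circ\theta$.

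Next I would check that $U_G(T')$ is a $G$-transfer system. The partial-order and refinement properties are inherited directly from $T'$. Closure under conjugation by $g\in G$ comes from applying condition ii) of Definition~\ref{defitransfer} to the isomorphism $c_{g^{-1}}\colon gHg^{-1}\to H,\ h\mapsto g^{-1}hg$, since its preimage of $K$ is $gKg^{-1}$. Closure under restriction to a subgroup $J\leqslant H$ follows from applying ii) to the inclusion $\iota\colon J\hookrightarrow H$, whose preimage of $K$ is $J\cap K$. Monotonicity of both $R_G$ and $U_G$ with respect to inclusion of transfer systems is immediate from inspecting the definitions.

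Finally I would establish the Galois connection. For the forward direction, assume $U_G(T')\subset T$ and suppose $K\leqslant_{T'} H$; for any $\theta\colon L\to H$ with $L\leqslant G$, condition ii) on $T'$ gives $\theta^{-1}(K)\leqslant_{T'} L$, and since both $\theta^{-1}(K)$ and $L$ are subgroups of $G$, this lies in $U_G(T')\subset T$, so $\theta^{-1}(K)\leqslant_T L$; hence $K\leqslant_{R_G(T)} H$. For the reverse direction, assume $T'\subset R_G(T)$ and suppose $K\leqslant_{U_G(T')} H$, i.e.\ $K,H\leqslant G$ and $K\leqslant_{T'} H$; then $K\leqslant_{R_G(T)} H$, and instantiating the definition with $\theta=\id_H\colon H\to H$ (valid since $H\leqslant G$) yields $K\leqslant_T H$. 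The only mildly delicate step is transitivity of $R_G(T)$, which is the natural place a subtle error could creep in; everything else is bookkeeping with the definition of global transfer system.
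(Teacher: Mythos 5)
Your proposal is correct and follows essentially the same route as the paper: both verify (or, in the paper's case, assert) that $R_G(T)$ and $U_G(T')$ are well-defined monotone maps and then prove the Galois connection $U_G(T') \subset T \iff T' \subset R_G(T)$ directly, with the forward direction using condition ii) of Definition~\ref{defitransfer} and the reverse direction instantiating $\theta = \id$. The only difference is that you spell out the well-definedness checks (notably transitivity of $R_G(T)$ via restricting $\theta$ to $\theta^{-1}(H)$), which the paper leaves as an immediate consequence of the definitions.
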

\begin{proof}
It follows from Definition~\ref{defitransfer} and the definition of a $G$-transfer system that $R_G(T)$ is indeed a global transfer system, and $U_G(T')$ is a $G$-transfer system. By construction the functions $R_G$ and $U_G$ preserve inclusions and thus are functors between the respective posets.

Let $T$ be a $G$-transfer system, and let $T'$ be a global transfer system. Assume that $U_G(T') \subset T$. Let $K \leqslant_{T'} H$, since $T'$ is a global transfer system, for each $L \leqslant G$ and $\theta \colon L \to H$ we have that $\theta^{-1}(K) \leqslant_{T'} L$, and therefore the transfer from $\theta^{-1}(K)$ to $L$ is admissible in $U_G(T') \subset T$. This means that $K \leqslant_{R_G(T)} H$ by the definition of $R_G(T)$, and thus $T' \subset R_G(T)$.

Assume now that $T' \subset R_G(T)$. Let $K \leqslant L \leqslant G$, if the transfer from $K$ to $L$ is admissible for $U_G(T')$ then it is also admissible for $T'$ and thus for $R_G(T)$. Taking $\theta$ in the definition of $R_G(T)$ to be the $\id_L$ shows that $K \leqslant_T L$.
\end{proof}

\begin{thm}
\label{thmmain}
The functor $\mathpzc{T} \colon \Ho(\Ningl) \to \Igl$ from the homotopy category of global $\Nin$-operads to the poset of global transfer systems is essentially surjective, and therefore by Proposition~\ref{prophoglposet} it is an equivalence of categories.
\end{thm}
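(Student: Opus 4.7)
The plan is to prove essential surjectivity of $\mathpzc{T}$: given any global transfer system $T \in \Igl$, I will construct a global $\Nin$-operad $\Op$ with $T_\Op = T$. Since Proposition~\ref{prophoglposet} already gives full faithfulness, this is the only missing piece. The strategy is to reduce to the equivariant classification of $\Nin$-operads for the orthogonal groups $O(n)$, transport the resulting $O(n)$-operads to the global setting via the functors $R_{O(n)}$ developed earlier in this section, and then assemble them into a single global $\Nin$-operad by taking a product.

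First, for each $n \geqslant 1$, I would set $T_n \defeq U_{O(n)}(T)$ and invoke the Gutiérrez--White classification of equivariant $\Nin$-operads for compact Lie groups to choose an $O(n)$-equivariant $\Nin$-operad $\mathcal{P}^{(n)}$ whose associated $O(n)$-transfer system is $T_n$. Since $(-)(\mathcal{U}_{O(n)})$ is oplax symmetric monoidal from $(\Spc, \boxtimes)$ to $O(n)$-spaces with the cartesian product, its right adjoint $R_{O(n)}$ inherits a canonical lax symmetric monoidal structure, and therefore sends operads in $O(n)$-spaces to operads in $(\Spc, \boxtimes)$. Proposition~\ref{propGtoglobalnin} then guarantees that the underlying symmetric sequence of $R_{O(n)}(\mathcal{P}^{(n)})$ is a fibrant $\Nin$-symmetric sequence, so $R_{O(n)}(\mathcal{P}^{(n)})$ is a global $\Nin$-operad. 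I would next check, by combining the explicit description of admissible graph subgroups in Proposition~\ref{propGtoglobalnin} with the equivariant analogue of Lemma~\ref{lemmdisjointunion} applied to $\mathcal{P}^{(n)}$ (decomposing $\theta^\ast(H/K)$ into orbits of the form $L/\theta^{-1}(gKg^{-1})$, and noting that precomposing $\theta$ with conjugation $c_g \colon H \to H$ realises every such orbit as $L/\theta'^{-1}(K)$ for some $\theta' \colon L \to H$), that the associated global transfer system of $R_{O(n)}(\mathcal{P}^{(n)})$ is exactly $R_{O(n)}(T_n)$ in the sense of Proposition~\ref{propadjunctionG}.

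Next, I would form
\[\Op \defeq \prod_{n \geqslant 1} R_{O(n)}(\mathcal{P}^{(n)}),\]
the product taken in operads in $(\Spc, \boxtimes)$; this exists because the closed symmetric monoidal structure on $(\Spc, \boxtimes)$ lets limits of operads be computed on their underlying symmetric sequences. By Proposition~\ref{proparbiproductsobspc} applied to the fibrant $\Nin$-symmetric sequences $R_{O(n)}(\mathcal{P}^{(n)})$, their product $\Op$ is again a fibrant $\Nin$-symmetric sequence, hence a global $\Nin$-operad, with associated global transfer system $T_\Op = \bigcap_{n \geqslant 1} R_{O(n)}(T_n)$.

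The final step is to verify the equality $\bigcap_{n \geqslant 1} R_{O(n)}(T_n) = T$. For each $n$ the inclusion $T \subset R_{O(n)}(T_n)$ is simply the unit of the adjunction $U_{O(n)} \dashv R_{O(n)}$ of Proposition~\ref{propadjunctionG}, giving $T \subset T_\Op$. For the reverse inclusion I would use the fact that every compact Lie group embeds into some orthogonal group (Peter--Weyl): given $K \leqslant H$ with $K \leqslant_{T_\Op} H$, pick $n$ with $H \leqslant O(n)$; the condition $K \leqslant_{R_{O(n)}(T_n)} H$ applied to $\theta = \id_H$ then yields $K \leqslant_{T_n} H$, which is exactly $K \leqslant_T H$ since $K, H \leqslant O(n)$. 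The main technical obstacle is justifying that $R_{O(n)}$ lifts cleanly to operads in the box product and identifying the associated global transfer system of $R_{O(n)}(\mathcal{P}^{(n)})$ with $R_{O(n)}(T_n)$; the latter requires the orbit decomposition above, together with the observation that quantifying over all homomorphisms $L \to H$ already absorbs the conjugation ambiguity. Once this identification is in hand, the intersection computation and the Peter--Weyl reduction are routine.
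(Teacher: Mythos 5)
Your proposal is correct and follows essentially the same route as the paper: reduce to the Gutiérrez--White classification for the groups $O(n)$, apply $R_{O(n)}$, identify the resulting global transfer system with $R_{O(n)}(U_{O(n)}(T))$ via the orbit decomposition of $\theta^\ast(H/K)$, take the product using Proposition~\ref{proparbiproductsobspc}, and conclude via the adjunction unit and the embedding of any compact Lie group into some $O(n)$. The only cosmetic difference is that you obtain the lax symmetric monoidal structure of $R_{O(n)}$ with respect to $\boxtimes$ by doctrinal adjunction from the oplax structure of $(-)(\mathcal{U}_{O(n)})$, whereas the paper uses strong monoidality for the categorical product together with the natural transformation $\rho \colon \boxtimes \Rightarrow \times$; these amount to the same thing.
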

\begin{proof}
Recall that in Proposition~\ref{prophoglposet} we proved that $\Ho(\Ningl)$ is a thin category and the functor $\mathpzc{T}$ is fully faithful. Let $T \in \Igl$ be a global transfer system. For each $n \in \NN$, take the orthogonal group $O(n)$. By \cite[Theorem 4.7]{GutierrezWhite} there exists an $\Nin$-operad $\Op_{O(n)}$ with associated $O(n)$-transfer system $U_{O(n)}(T)$. The functor 
\[R_{O(n)} \colon \OnTopcat \to \Spc\]
is strong monoidal with respect to the categorical products. Therefore applying this functor to $\Op_{O(n)}$ yields an operad in $(\Spc, \times)$. The natural transformation $\rho \colon \boxtimes \Rightarrow \times$ of \cite[Section 1.3]{global} turns this into an operad $R_{O(n)}(\Op_{O(n)})$ with respect to the box product (see also \cite[Remark 2.8]{globaloperads}). By Proposition~\ref{propGtoglobalnin} the operad $R_{O(n)}(\Op_{O(n)})$ is a global $\Nin$-operad and its associated global transfer system is precisely $R_{O(n)}(U_{O(n)}(T))$. This justifies the notation we used for the functor on transfer systems $R_G \colon \IG \to \Igl$.

More explicitly, by Proposition~\ref{propGtoglobalnin} the transfer from $K$ to $H$ is admissible for $R_{O(n)}(\Op_{O(n)})$ if and only if for each $\theta \colon L \to H$ with $L \leqslant O(n)$ the graph subgroup associated to the finite $L$-set $\theta^\ast(H/K)$ is admissible for $\Op_{O(n)}$. The finite $L$-set $\theta^\ast(H/K)$ is a disjoint union of $L$-orbits of the form $L/(\theta^{-1}(hKh^{-1}))$ for $h \in H$. By the equivalence between $G$-transfer systems and $G$-indexing systems, or more concretely \cite[Lemma 4.10]{BLUMBERG2015658} and \cite[Lemma 4.15]{BLUMBERG2015658}, $\theta^\ast(H/K)$ is admissible for $\Op_{O(n)}$ if and only if each of these $L$-orbits is admissible for the $G$-transfer system associated to $\Op_{O(n)}$. Therefore the transfer from $K$ to $H$ is admissible for $R_{O(n)}(\Op_{O(n)})$ if and only if for each $\theta' \colon L \to H$ with $L \leqslant O(n)$ the transfer from $\theta'^{-1}(K)$ to $L$ is admissible for $\Op_{O(n)}$, i. e. $\theta'^{-1}(K) \leqslant_{U_{O(n)}(T)} L$, and by construction this holds if and only if $K \leqslant_{R_{O(n)}(U_{O(n)}(T))} H$. This works because we are quantifying over all $\theta'$, which includes the $\theta$ from earlier in the paragraph and all its conjugates.

By adjointness, $T \subset R_{O(n)}(U_{O(n)}(T))$ for each $n \geqslant 1$. Therefore $T \subset \bigcap_{n \geqslant 1} R_{O(n)}(U_{O(n)}(T))$. Assume that the transfer from $K$ to $H$ is admissible for the global transfer system $\bigcap_{n \geqslant 1} R_{O(n)}(U_{O(n)}(T))$. The compact Lie group $H$ embeds into some orthogonal group $O(n_0)$, and so by taking $\theta = \id_H$ we see that the transfer from $K$ to $H$ is admissible for $U_{O(n_0)}(T)$, and thus for $T$. This proves that $\bigcap_{n \in \NN} R_{O(n)}(U_{O(n)}(T)) = T$.

The category of operads in $(\Spc, \boxtimes)$ is complete and cocomplete, see \cite[\nopp 3.1.6]{fresse} for example. Proposition~\ref{propGtoglobalnin} also proves that the underlying symmetric sequence of each $R_{O(n)}(\Op_{O(n)})$ is fibrant, and thus by Proposition~\ref{proparbiproductsobspc} the operad $\prod_{n \geqslant 1} R_{O(n)}(\Op_{O(n)})$ is a global $\Nin$-operad with associated global transfer system $\bigcap_{n \geqslant 1} R_{O(n)}(U_{O(n)}(T)) = T$.
\end{proof}

In the preceding proof it is enough to consider just orthogonal groups because the restriction of a global transfer system to an $O(n)$ transfer system contains information on which transfers are admissible for all subgroups of $O(n)$. Since each compact Lie group is isomorphic to a subgroup of an orthogonal group, doing this for all $n \in \NN$ gives all the information about the global transfer system. The proof would still work if we restricted this process to groups in a set of representatives of isomorphism classes of all compact Lie groups.

\begin{rem}
\label{remEinfty}
There is a greatest global transfer system, which we denote by $\All$ and call the \emph{complete global transfer system}, with all finite index transfers admissible. The global $\Nin$-operads associated to the complete global transfer system are the global $E_\infty$-operads of \cite[Definition 5.1]{globaloperads}. There is also a smallest global transfer system, denoted by $\None$, with no transfers.

Let $G = e$ be the trivial group. Then the functor $R_e \colon \Ie \to \Igl$ sends the unique $e$-transfer system to $\All$. By the previous results, this means in particular that for any $E_\infty$-operad in spaces $\Op$ the global $\Nin$-operad $R_e(\Op)$ is actually a global $E_\infty$-operad. Note that the global operad obtained from giving $\Op$ the trivial global structure is instead a global $\Nin$-operad over the smallest global transfer system $\None$ (see \cite[Remark 5.9]{globaloperads}).

Similarly, for an arbitrary $G$, the functor $R_G \colon \IG \to \Igl$ sends the complete $G$-transfer system (which we also denote by $\All$) to the complete global transfer system $\All$. Therefore for any complete $\Nin$-operad $\Op$ for $G$ (usually called a $G$-$E_\infty$-operad) the global $\Nin$-operad $R_G(\Op)$ is also a global $E_\infty$-operad.
\end{rem}

\begin{rem}[Global $\Nin$-algebras]
If $X$ is a $G$-space which is an algebra over a $G$-equivariant $\Nin$-operad $\Op$ then since the functor $R_G$ is lax monoidal the orthogonal space $R_G(X)$ is an algebra over the global $\Nin$-operad $R_G(\Op)$. These orthogonal spaces exhibit intermediate levels of commutativity if the global transfer system associated to $R_G(\Op)$ is not the maximal one.
\end{rem}

\section{Relating global transfer systems to \texorpdfstring{$G$}{G}-transfer systems}

Our goal in this section is to study the relation between global transfer systems and transfer systems for a single compact Lie group $G$.

To start, we will restrict the class of $G$-transfer systems that we are interested in. Recall that one important conceptual difference is that global transfer systems have to be closed under restriction along arbitrary homomorphisms, but $G$-transfer systems for a fixed group $G$ only have to be closed under restriction along subgroup inclusions and conjugation. This means that for each $G$ the underlying $G$-transfer system of a global transfer system is also closed under restriction along arbitrary homomorphisms.

\begin{defi}
Let $T$ be a $G$-transfer system. We say that $T$ is \emph{closed under arbitrary homomorphisms} if for any subgroups $H, L \leqslant G$ and any continuous homomorphism $\theta \colon L \to H$, if $K \leqslant_T H$ then $\theta^{-1}(K) \leqslant_T L$.

Let $\IGc \leqslant \IG$ denote the subposet of those $G$-transfer systems closed under arbitrary homomorphisms.
\end{defi}

The functors $R_G \colon \IG \to \Igl$ and $U_G \colon \Igl \to \IG$ of Proposition~\ref{propadjunctionG} are adjoint. As mentioned at the beginning of this section, $U_G \colon \Igl \to \IG$ actually lands in $\IGc$. From the construction of $R_G$ we see that when restricted to $\IGc$, the composition $U_G \circ R_G$ is actually the identity, so in particular $U_G \colon \Igl \to \IGc$ is surjective, and $R_G$ gives an embedding of $\IGc$ into $\Igl$. Thus, in order to study the poset of global transfer systems (or equivalently, the homotopy theory of global $\Nin$-operads) we want to study the posets $\IGc$ for varying $G$.

We begin with a simple example.

\begin{ex}
Consider the cyclic group $G = C_p$ for $p$ a prime. Since it has only the trivial subgroup and $C_p$, there are exactly two different $C_p$-transfer systems, the complete one, which we denote by $\All$, and the trivial transfer system without the transfer from $\langle e\rangle$ to $C_p$, which we denote by $\None$. Both these transfer systems are in $\overline{I}_{C_p}$.

By Remark~\ref{remEinfty}, $R_{C_p}(\All) = \All$. However the global transfer system $R_{C_p}(\None)$ is our first example that is neither complete nor trivial. For a compact Lie group $H$ and $K\leqslant H$ of finite index, since homomorphisms from $C_p$ to $H$ are equivalent to elements of $H$ of order $p$, the transfer from $K$ to $H$ is admissible in $R_{C_p}(\None)$ if and only if each element $h \in H$ of order $p$ belongs to $K$.
\end{ex}

More generally, If $G$ is any compact Lie group, then there is a complete $G$-transfer system denoted by $\All$, which is the maximal element of $\IG$, and a trivial $G$-transfer system with no transfers, denoted by $\None$, which is the minimal element of $\IG$. It is simple to check that they are both in $\IGc$.

Completely computing the poset $\IG$ of $G$-transfer systems is in general a very hard task, and it has only been done for some concrete finite groups (see \cite{balchin2021ninftyoperads}, \cite{Rubin2}, \cite{balchinCpqr}). However the subposet $\IGc$ of $G$-transfer systems closed under arbitrary homomorphisms is much easier to compute, as we will show through the following examples.

\begin{ex}
Balchin, Barnes and Roitzheim prove in \cite{balchin2021ninftyoperads} that for $p$ prime and $n \geqslant 1$ the poset $I_{C_{p^n}}$ of transfer systems for $C_{p^n}$ is isomorphic to the $(n + 1)$-associahedron. In particular, it has cardinality the Catalan number
\[\catalan(n + 1) = \frac{(2n + 2)!}{(n + 2)! n!}.\]

For each $0 \leqslant m \leqslant n$ let $I_m$ be the $C_{p^n}$-transfer system that contains the transfer  $C_{p^i} \leqslant C_{p^j}$ with $i < j$ if and only if $i \geqslant n - m$. Note that $I_n = \All$ and $I_0 = \None$. Using the description of $C_{p^n}$-transfer systems as graphs used throughout \cite{balchin2021ninftyoperads} it is possible to check that these are the only transfer systems closed under arbitrary homomorphisms, as this implies that if the transfer $C_{p^i} \leqslant C_{p^{i + 1}}$ is admissible, then so is the transfer $C_{p^{i + j}} \leqslant C_{p^{i + j + 1}}$ for any $j \leqslant n - i - 1$. Thus the poset $\overline{I}_{C_{p^n}}$ of transfer systems coming from global transfer systems has $n + 1$ elements and is linearly ordered.
\end{ex}

\begin{ex}
We will now consider $G = \Sigma_3$ the symmetric group of order $6$. Note that $I_{\Sigma_3}$ has a more interesting structure than previous examples because $\Sigma_3$ is not abelian. The poset $I_{\Sigma_3}$ was computed in \cite[Figure~4]{Rubin2}. We describe it here to compute its subposet $\overline{I}_{\Sigma_3}$ of transfer systems coming from global transfer systems.

Let $\sigma$ and $\tau$ be elements of $\Sigma_3$ of orders $3$ and $2$ respectively, and let $e$ be the identity of $\Sigma_3$. A $\Sigma_3$-transfer system $T\in I_{\Sigma_3}$ is a partial order on $\Sub(\Sigma_3)$ which is closed under restriction to subgroups and conjugation. The closure under conjugation in particular means that for each $g \in G$, $K \leqslant_T H$ if and only if $g K g^{-1} \leqslant_T g H g^{-1}$. Thus $T$ is completely determined by which of the following transfers are admissible for $T$: $\langle e \rangle \leqslant \Sigma_3$, $\langle \tau \rangle \leqslant \Sigma_3$, $\langle \sigma \rangle \leqslant \Sigma_3$, $\langle e \rangle \leqslant \langle \tau \rangle$, and $\langle e \rangle \leqslant \langle \sigma \rangle$. We represent these abstract transfers in the following diagram.

\begin{equation}
\label{diagSsuborder}
\begin{tikzcd}[every label/.append style = {font = \LARGE}]
	& {\Sigma_3} \\
	{\langle \sigma \rangle} && {\langle \tau \rangle} \\
	& {\langle e \rangle}
	\arrow["\geqslant"{description}, sloped, draw=none, from=3-2, to=2-1]
	\arrow["\leqslant"{description}, sloped, draw=none, from=3-2, to=2-3]
	\arrow["\leqslant"{description}, sloped, draw=none, from=3-2, to=1-2]
	\arrow["\leqslant"{description}, sloped, draw=none, from=2-1, to=1-2]
	\arrow["\geqslant"{description}, sloped, draw=none, from=2-3, to=1-2]
\end{tikzcd}
\end{equation}

A $\Sigma_3$-transfer system $T$ being closed under restriction to subgroups means that if $K \leqslant_T H$ and $L$ is a subgroup of $H$, then $L \cap K \leqslant_T L$. This imposes the following conditions on $T$:
\begin{enumerate}[\roman*)]
    \item If $\langle e \rangle \leqslant_T \Sigma_3$ then $\langle e \rangle \leqslant_T \langle \tau \rangle$ and $\langle e \rangle \leqslant_T \langle \sigma \rangle$.
    \item If $\langle \sigma \rangle \leqslant_T \Sigma_3$ then $\langle e \rangle \leqslant_T \langle \tau \rangle$.
    \item If $\langle \tau \rangle \leqslant_T \Sigma_3$ then $\langle e \rangle \leqslant_T \langle \sigma \rangle$.
    \item \label{conju} If $\langle \tau \rangle \leqslant_T \Sigma_3$ then $\langle e \rangle \leqslant_T \langle \sigma \tau \sigma^{-1} \rangle$ and by conjugation $\langle e \rangle \leqslant_T \langle \tau \rangle$.
\end{enumerate}

It is important to remember the different conjugate subgroups of $\Sigma_3$. Even though closure under conjugation means that admissibility of conjugacy classes of pairs $K \leqslant H$ completely determines a $\Sigma_3$-transfer system, different conjugate subgroups can still impose restrictions on which transfers are admissible, as in \ref{conju}.

Lastly, recall that $T$ has to be transitive as it is a partial order. These are all the conditions that a $\Sigma_3$-transfer system has to satisfy, thus we can compute the poset $I_{\Sigma_3}$, shown in figure~(\ref{diagS3}) below.

The $\Sigma_3$-transfer systems in $\overline{I}_{\Sigma_3}$ have to additionally be closed under arbitrary homomorphisms. These homomorphisms can be decomposed as an epimorphism followed by an isomorphism followed by a subgroup inclusion. All isomorphisms of subgroups of $\Sigma_3$ are induced by conjugation by an element of $\Sigma_3$, so additional restrictions on $T$ will only come from non-injective epimorphisms, and the only one with non-trivial image is $\theta \colon \Sigma_3 \to \langle \tau \rangle$ (up to conjugation). This epimorphism $\theta$ creates the additional condition on $T \in \overline{I}_{\Sigma_3}$ that if $\langle e \rangle \leqslant_T \langle \tau \rangle$ then $\langle \sigma \rangle \leqslant_T \Sigma_3$.

Below we show the poset $I_{\Sigma_3}$ and its subposet $\overline{I}_{\Sigma_3}$ in the style of the figures of \cite{Rubin2}. Each of the nodes of a diagram represents a partial order on $\Sub(\Sigma_3)$ with the relations given by the edges of the node, and with subgroups distributed as in diagram (\ref{diagSsuborder}). The dashed lines represent the partial orders on $I_{\Sigma_3}$ and $\overline{I}_{\Sigma_3}$ given by inclusion, from the lower to the higher nodes on the picture.

\begin{equation}
\label{diagS3}
\begin{tikzpicture}[scale=1.7, baseline=(current  bounding  box.center)]
	\node(A) at (4,0) {$\trSempty$};
	\node(B) at (3,0.7) {$\trSsigma$};
	\node(C) at (5,0.7) {$\trStau$};
	\node(D) at (4,1.4) {$\trStausigma$};
	\node(E') at (4,2.4) {$\trStausigmaS$};
	\node(F) at (5.4,1.9) {$\trSsigmatoS$};
	\node(G) at (5,3.1) {$\trSSsigmatoS$};
	\node(I) at (3,3.1) {$\trSStautoS$};
	\node(J) at (4,3.8) {$\trSall$};
	
	\node(K) at (8,0) {$\trSempty$};
	\node(L) at (7,0.7) {$\trSsigma$};
	\node(M) at (9.4,1.9) {$\trSsigmatoS$};
	\node(P) at (9,3.1) {$\trSSsigmatoS$};
	\node(O) at (8,3.8) {$\trSall$};

	\path[dashed]
	(A) edge node {} (B)
	
	(A) edge node {} (C)
	(B) edge node {} (D)
	(C) edge node {} (D)
	
	(D) edge node {} (E')
	
	(C) edge node {} (F)
	(F) edge node {} (G)
	(E') edge node {} (G)
	
	(E') edge node {} (I)
	(I) edge node {} (J)
	(G) edge node {} (J)
	
	(K) edge node {} (L)
	(K) edge node {} (M)
	(L) edge node {} (P)
	(M) edge node {} (P)
	(P) edge node {} (O)
	;
	
	\node at (4,4.4) {$I_{\Sigma_3}$};
	\node at (8,4.4) {$\overline{I}_{\Sigma_3}$};
	\draw (2.6,4.2) -- (5.8,4.2) ;
	\draw (6.4,4.2) -- (9.8,4.2) ;
\end{tikzpicture}\end{equation}
\end{ex}

\begin{ex}
Let $Q_8$ denote the quaternion group. The poset $I_{Q_8}$ was computed in \cite[Figure~3]{Rubin2}, and it consists of $68$ elements. However the subposet $\overline{I}_{Q_8}$ contains just $6$ elements. The main reason for this greater disparity compared to the case of $\Sigma_3$ is that the three\footnote{The author mistakenly wrote \emph{four} in the published version.} subgroups of $Q_8$ of order 4 are normal, so conjugation does not force any admissibility relations between the transfers associated to them for general $Q_8$-transfer systems. However the outer automorphisms of $Q_8$ provide such relations for $Q_8$-transfer systems closed under arbitrary homomorphisms, causing the subposet $\overline{I}_{Q_8}$ to be much smaller.
\end{ex}

To summarize, from the examples of this section we can deduce that the additional structure on the category of global $\Nin$-operads imposed by the global compatibility of the actions for all groups substantially limits the possible underlying $G$-homotopy types of global $\Nin$-operads, compared to all $\Nin$-operads for the group $G$. However the collection of possible global $\Nin$-operads shows a complex and very interesting structure, which we analyze further in upcoming work.

\appendix
\section{A quick note on definitions of operads}
\label{appendixdefop}

Let $(\cat, \otimes, \un)$ be a cocomplete symmetric monoidal category, where the tensor product preserves all small colimits in both variables. There are (at least) two different but equivalent definitions for the concept of an operad in $(\cat, \otimes, \un)$. One is named \emph{monoidal definition} in \cite{LodayVallette}. According to it an operad is a monoid in $(\sob, \circ)$, the category of symmetric sequences in $\cat$ with a particular monoidal structure, the monoidal composition structure (see \cite[\nopp 2.2.2]{fresse}). This is the definition used in this paper. A different one, named the \emph{classical definition} in \cite{LodayVallette}, states that an operad is a symmetric sequence together with a unit and a set of \emph{composition morphisms} that satisfy some associativity, unital and equivariance axioms. This is the original definition given by May in \cite{may1972geometry}.

These two definitions are equivalent. This can be deduced from putting together \cite[\nopp 2.1.8]{fresse}, \cite[\nopp 2.2.2]{fresse} and \cite[\nopp 3.2.11]{fresse}. We will expand this slightly, but we will only give full details for the specific statement that we need in the body of this paper (given as Lemma~\ref{lemmdefsoperad}). For $\Op \in \sob$ a symmetric sequence, by the previously mentioned results from \cite{fresse}, we have the following description
\begin{equation}
\label{eqappendix}
    (\Op \circ \Op)_n = \coprod_{k \in \NN} \Op_k \otimes_{\Sigma_k} (\coprod_{n_1 + \dots + n_k = n} (\Sigma_n \otimes_{\Sigma_{n_1} \times \dots \times \Sigma_{n_k}} \Op_{n_1} \otimes \dots \otimes \Op_{n_k}))= \coprod_{k \in \NN} (\Op \circ \Op)_{n, k}.
\end{equation}

We introduced the notation $(\Op \circ \Op)_{n, k}$ for each of the terms of the coproduct indexed by $\NN$. The $\Sigma_k$-action on the coproduct indexed by the tuples $n_1 + \dots + n_k = n$ is the following. Consider a permutation $\sigma \in \Sigma_k$ and $n_1, \dots , n_k \geqslant 0$ such that $n_1 + \dots + n_k = n$. Let $\blk(\sigma, n_1, \dots , n_k)~\in~\Sigma_n$ denote the block permutation obtained by permuting blocks of sizes $n_1, \dots , n_k$ using $\sigma$. The element $\sigma$ acts by permuting the terms $\Op_{n_1} \otimes \dots \otimes \Op_{n_k}$, and by left multiplication with $\blk(\sigma, n_1, \dots , n_k)$ in $\Sigma_n$.

An operad structure on $\Op$ includes a composition morphism of symmetric sequences $\mu \colon \Op \circ \Op \to \Op$. By (\ref{eqappendix}) this morphism is the same as a set of morphisms
\[\mu_{k, n_1, \dots, n_k} \colon \Op_k \otimes \Op_{n_1} \otimes \dots \otimes \Op_{n_k} \to \Op_{n_1 + \dots + n_k}\]
satisfying some equivariance conditions. For the purposes of Lemma~\ref{lemmdisjointunion} and Proposition~\ref{propopgivesindexing} we would like to write out these equivariance conditions as saying that the morphisms $\mu_{k, n_1, \dots, n_k}$ are $(\Sigma_{n_1} \times \dots \times \Sigma_{n_k})$-equivariant and $\Sigma_k$-equivariant for some well-defined actions. This is however not possible, due to the fact that acting via $\blk(\sigma, n_1, \dots , n_k)$ on $\Op_{n_1 + \dots + n_k}$ does \underline{\emph{not}} define a $\Sigma_k$-action because $\blk(-, n_1, \dots , n_k) \colon \Sigma_k \to \Sigma_{n_1 + \dots + n_k}$ is not generally a group homomorphism. However if for example $n_1 = \dots = n_k$ then $\blk(-, n_1, \dots , n_k)$ does indeed describe a group homomorphism: the composition $\Sigma_k \to \Sigma_k \wr \Sigma_{n'} \to \Sigma_{k n'}$. The following lemma is a generalization of this observation and it is enough for our purposes.

\begin{lemm}
\label{lemmdefsoperad}
Let $\Op$ be an operad in $\cat$ with respect to the monoidal definition of \cite[\nopp 3.1.1]{fresse}, with composition $\mu \colon \Op \circ \Op \to \Op$. Then for each $k, n_1, \dots, n_k \geqslant 0$ the composition $\mu$ determines a morphism in $\cat$
\[\mu_{k, n_1, \dots, n_k} \colon \Op_k \otimes \Op_{n_1} \otimes \dots \otimes \Op_{n_k} \to \Op_{n_1 + \dots + n_k}.\]

Consider $k_1, \dots, k_p \geqslant 1$ such that $k_1 + \dots + k_p=k$. Consider also $m_1, \dots, m_p \geqslant 0$. Let $\underline{k}$ denote the tuple of $k$ natural numbers formed by $k_1$ copies of $m_1$, $k_2$ copies of $m_2$ and so on. Let $m = \Sigma_{i=1}^p k_i p_i$. Consider the group 
\[G=(\Sigma_{k_1} \times \dots \times \Sigma_{k_p}) \ltimes (\Sigma_{m_1}^{\times k_1} \times \dots \times \Sigma_{m_p}^{\times k_p})\]
and the homomorphism $\phi \colon G \to \Sigma_m$ given by sending an element of $\Sigma_{k_1} \times \dots \times \Sigma_{k_p}$ to the associated block permutation in $\Sigma_m$, and an element of $\Sigma_{m_1}^{\times k_1} \times \dots \times \Sigma_{m_p}^{\times k_p}$ to the permutation of $\Sigma_m$ that acts individually on each block.

Then the previously mentioned morphism 
\[\mu_{k, \underline{k}} \colon \Op_k \otimes \Op_{m_1}^{\otimes k_1} \otimes \dots \otimes \Op_{m_p}^{\otimes k_p} \to \Op_m\]
is $G$-equivariant. Here $G$ acts on $\Op_m$ through $\phi$, each term $\Sigma_{m_i}$ in the definition of $G$ acts on $\Op_{m_i}$, and the term $\Sigma_{k_1} \times \dots \times \Sigma_{k_p} \leqslant \Sigma_k$ acts on $\Op_k$ and it acts on $\Op_{m_1}^{\otimes k_1} \otimes \dots \otimes \Op_{m_p}^{\otimes k_p}$ by permuting the factors.

\end{lemm}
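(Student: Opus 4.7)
The plan is to first extract the morphisms $\mu_{k, n_1, \dots, n_k}$ from $\mu \colon \Op \circ \Op \to \Op$ using the coproduct decomposition~(\ref{eqappendix}), and then to verify the required $G$-equivariance by breaking $G$ into its normal subgroup and its quotient in the semidirect product. For each tuple $(n_1, \dots, n_k)$ with sum $n$, the decomposition~(\ref{eqappendix}) provides a canonical inclusion
\[\iota \colon \Op_k \otimes \Op_{n_1} \otimes \dots \otimes \Op_{n_k} \to (\Op \circ \Op)_n\]
of the summand indexed by $(k, (n_1, \dots, n_k))$ via the identity coset in $\Sigma_n$, and I define $\mu_{k, n_1, \dots, n_k} \defeq \mu_n \circ \iota$.

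For the normal subgroup $N = \Sigma_{m_1}^{\times k_1} \times \dots \times \Sigma_{m_p}^{\times k_p}$, the homomorphism $\phi$ restricts to the standard block inclusion $N \hookrightarrow \Sigma_m$. An element $\tau \in N$ acting factor-wise on $\Op_{n_1} \otimes \dots \otimes \Op_{n_k}$ can be absorbed through the defining relation $\sigma \cdot s \otimes x = \sigma \otimes s \cdot x$ of the balanced tensor product into left multiplication by $\phi(\tau)$ on the $\Sigma_n$-factor of $(\Op \circ \Op)_n$. Hence $\iota$ is $N$-equivariant when $N$ acts on the target through $\phi$, and $N$-equivariance of $\mu_{k, \underline{k}}$ follows since $\mu$ is a morphism of symmetric sequences and in particular $\Sigma_m$-equivariant.

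The key step is equivariance under the quotient $H = \Sigma_{k_1} \times \dots \times \Sigma_{k_p} \leqslant \Sigma_k$. Using the explicit description of the $\Sigma_k$-action on $(\Op \circ \Op)_{m, k}$ recalled before the statement, an element $\sigma \in H$ acts on the $(k, \underline{k})$-summand by acting on $\Op_k$, permuting the tensor factors (which preserves the summand because $\sigma$ stabilizes the tuple $\underline{k}$), and multiplying the $\Sigma_m$-factor on the left by $\blk(\sigma, \underline{k})$. Thus $\iota$ intertwines the $H$-action on the source with the $\phi(H)$-action on $(\Op \circ \Op)_m$, and $\Sigma_m$-equivariance of $\mu$ then yields $H$-equivariance of $\mu_{k, \underline{k}}$. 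The essential point making this work is that the restriction $\phi|_H = \blk(-, \underline{k}) \colon H \to \Sigma_m$ is a genuine group homomorphism, even though $\blk(-, n_1, \dots, n_k)$ fails to be one for arbitrary tuples; this is the wreath-product observation highlighted before the lemma, and it is precisely what fails without the hypothesis that the $n_i$ come grouped into blocks of equal values. Combining the two equivariances and using that $\phi$ is itself a group homomorphism on $G$ (a direct check from the semidirect-product description via the conjugation identity for block permutations) yields the desired $G$-equivariance of $\mu_{k, \underline{k}}$. I expect the main obstacle to be just the careful bookkeeping required to match the abstractly-defined $G$-action with the concrete description coming from the tensor-over-subgroup constructions in~(\ref{eqappendix}).
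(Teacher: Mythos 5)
Your proposal is correct and follows essentially the same route as the paper: extract $\mu_{k,n_1,\dots,n_k}$ from the summand decomposition of $(\Op\circ\Op)_n$, get equivariance for $\Sigma_{m_1}^{\times k_1}\times\dots\times\Sigma_{m_p}^{\times k_p}$ from the balanced-tensor/induction--restriction structure (your explicit absorption argument is just the concrete form of the paper's adjunction remark), and get equivariance for $\Sigma_{k_1}\times\dots\times\Sigma_{k_p}$ from the $\Sigma_m$-equivariance of $\mu_m$ together with the key observation that $\blk(-,\underline{k})$ becomes a genuine homomorphism on this subgroup. The only difference is that you spell out the semidirect-product bookkeeping a bit more explicitly, which is fine.
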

\begin{proof}
From \cite[\nopp 2.1.8]{fresse} and \cite[\nopp 2.2.2]{fresse} we obtain (\ref{eqappendix}), which produces the morphisms $\mu_{k, n_1, \dots, n_k}$ from the $\Sigma_n$-equivariant morphisms $\mu_n \colon (\Op \circ \Op)_n \to \Op_n$. The morphisms $\mu_{k, n_1, \dots, n_k}$ are $(\Sigma_{n_1} \times \dots \times \Sigma_{n_k})$-equivariant because induction $\Sigma_n \otimes_{\Sigma_{n_1} \times \dots \times \Sigma_{n_k}} -$ is left adjoint to restriction of the action.

The set function $\blk(-, \underline{k}) \colon \Sigma_k \to \Sigma_m$ is not a group homomorphism, however due to the form of the tuple of indices $\underline{k}$, it is a group homomorphism if we restrict to $\Sigma_{k_1} \times \dots \times \Sigma_{k_p} \leqslant \Sigma_k$, and this induces the homomorphism $\phi \colon G \to \Sigma_m$. We also obtain that the actions mentioned in the statement of this lemma are well-defined group actions.

The $(\Sigma_{k_1} \times \dots \times \Sigma_{k_p})$-equivariance of the morphism $\mu_{k, \underline{k}}$ with the group actions given in the statement of this lemma follows from the $\Sigma_m$-equivariance of $\mu_m \colon (\Op \circ \Op)_m \to \Op_m$ and the description of the $\Sigma_k$-action on (\ref{eqappendix}) given earlier.
\end{proof}

Note that this lemma does not include all the equivariance requirements on the morphisms $\mu_{k, n_1, \dots, n_k}$ that appear in the classical definition of operad.

\printbibliography

@article {BLUMBERG2015658,
    AUTHOR = {Blumberg, Andrew J. and Hill, Michael A.},
     TITLE = {Operadic multiplications in equivariant spectra, norms, and
              transfers},
   JOURNAL = {Adv. Math.},
  FJOURNAL = {Advances in Mathematics},
    VOLUME = {285},
      YEAR = {2015},
     PAGES = {658--708},
      ISSN = {0001-8708},
   MRCLASS = {55P91 (18D50 55P43 55P48)},
  MRNUMBER = {3406512},
MRREVIEWER = {Markus Szymik},
       DOI = {10.1016/j.aim.2015.07.013},
       URL = {https://doi.org/10.1016/j.aim.2015.07.013},
}

@book {global,
    AUTHOR = {Schwede, Stefan},
     TITLE = {Global homotopy theory},
    SERIES = {New Mathematical Monographs},
    VOLUME = {34},
 PUBLISHER = {Cambridge University Press, Cambridge},
      YEAR = {2018},
     PAGES = {xviii+828},
      ISBN = {978-1-108-42581-0},
   MRCLASS = {55P42 (18G55 19D99 55P91 55U35)},
  MRNUMBER = {3838307},
MRREVIEWER = {Gregory Z. Arone},
       DOI = {10.1017/9781108349161},
       URL = {https://doi.org/10.1017/9781108349161},
}

@book {hovey2007model,
    AUTHOR = {Hovey, Mark},
     TITLE = {Model categories},
    SERIES = {Mathematical Surveys and Monographs},
    VOLUME = {63},
 PUBLISHER = {American Mathematical Society, Providence, RI},
      YEAR = {1999},
     PAGES = {xii+209},
      ISBN = {0-8218-1359-5},
   MRCLASS = {55U35 (18D15 18G30 18G55)},
  MRNUMBER = {1650134},
MRREVIEWER = {Teimuraz Pirashvili},
}

@book {fresse,
    AUTHOR = {Fresse, Benoit},
     TITLE = {Modules over operads and functors},
    SERIES = {Lecture Notes in Mathematics},
    VOLUME = {1967},
 PUBLISHER = {Springer-Verlag, Berlin},
      YEAR = {2009},
     PAGES = {x+308},
      ISBN = {978-3-540-89055-3},
   MRCLASS = {18D50 (18G50 55P48 57T30)},
  MRNUMBER = {2494775},
MRREVIEWER = {Paul G. Goerss},
       DOI = {10.1007/978-3-540-89056-0},
       URL = {https://doi.org/10.1007/978-3-540-89056-0},
}

@book {diecklie,
    AUTHOR = {Br\"{o}cker, Theodor and tom Dieck, Tammo},
     TITLE = {Representations of compact {L}ie groups},
    SERIES = {Graduate Texts in Mathematics},
    VOLUME = {98},
 PUBLISHER = {Springer-Verlag, New York},
      YEAR = {1985},
     PAGES = {x+313},
      ISBN = {0-387-13678-9},
   MRCLASS = {22E45 (57-01)},
  MRNUMBER = {781344},
MRREVIEWER = {R. Schultz},
       DOI = {10.1007/978-3-662-12918-0},
       URL = {https://doi.org/10.1007/978-3-662-12918-0},
}

@book {may1972geometry,
    AUTHOR = {May, J. Peter},
     TITLE = {The geometry of iterated loop spaces},
    SERIES = {Lecture Notes in Mathematics, Vol. 271},
 PUBLISHER = {Springer-Verlag, Berlin-New York},
      YEAR = {1972},
     PAGES = {viii+175},
   MRCLASS = {55D35},
  MRNUMBER = {0420610},
MRREVIEWER = {J. Stasheff},
}

@unpublished{globaloperads,
      title={Operads in unstable global homotopy theory}, 
      author={Miguel Barrero},
      year={2021},
    note={arXiv:2110.01674 Accepted for publication in Algebraic \& Geometric Topology},
    eprint={2110.01674},
    archivePrefix={arXiv},
    primaryClass={math.AT}
}

@article {abeliantransfers,
    AUTHOR = {Barrero, Miguel},
     TITLE = {Global transfer systems of abelian compact Lie groups},
   JOURNAL = {Proc. Amer. Math. Soc.},
  FJOURNAL = {Proceedings of the American Mathematical Society},
    VOLUME = {151},
      YEAR = {2023},
    NUMBER = {7},
     PAGES = {3169--3182},
      ISSN = {0002-9939},
   MRCLASS = {55P91 (06A15 18M60)},
  MRNUMBER = {4579387},
       DOI = {10.1090/proc/16310},
       URL = {https://doi.org/10.1090/proc/16310},
}

@article {Rubin2,
    AUTHOR = {Rubin, Jonathan},
     TITLE = {Detecting {S}teiner and linear isometries operads},
   JOURNAL = {Glasg. Math. J.},
  FJOURNAL = {Glasgow Mathematical Journal},
    VOLUME = {63},
      YEAR = {2021},
    NUMBER = {2},
     PAGES = {307--342},
      ISSN = {0017-0895},
   MRCLASS = {55P91 (55P48)},
  MRNUMBER = {4244201},
       DOI = {10.1017/S001708952000021X},
       URL = {https://doi.org/10.1017/S001708952000021X},
}

@article {balchin2021ninftyoperads,
    AUTHOR = {Balchin, Scott and Barnes, David and Roitzheim, Constanze},
     TITLE = {$N_\infty$-operads and associahedra},
   JOURNAL = {Pacific J. Math.},
  FJOURNAL = {Pacific Journal of Mathematics},
    VOLUME = {315},
      YEAR = {2021},
    NUMBER = {2},
     PAGES = {285--304},
      ISSN = {0030-8730},
   MRCLASS = {18M80 (06A07 52B20 55N91 55P91)},
  MRNUMBER = {4366744},
       DOI = {10.2140/pjm.2021.315.285},
       URL = {https://doi.org/10.2140/pjm.2021.315.285},
}

@article {balchinCpqr,
    AUTHOR = {Scott Balchin and Daniel Bearup and Clelia Pech and Constanze Roitzheim},
     TITLE = {Equivariant homotopy commutativity for $G=C_{pqr}$},
   JOURNAL = {Tbilisi Mathematical Journal},
    VOLUME = {Special Issue on Homotopy Theory, Spectra, and Structured Ring Spectra},
      YEAR = {2020},
     PAGES = {17--31}
}

@article {rubin1,
    AUTHOR = {Rubin, Jonathan},
     TITLE = {Combinatorial {$N_\infty$} operads},
   JOURNAL = {Algebr. Geom. Topol.},
  FJOURNAL = {Algebraic \& Geometric Topology},
    VOLUME = {21},
      YEAR = {2021},
    NUMBER = {7},
     PAGES = {3513--3568},
      ISSN = {1472-2747},
   MRCLASS = {55P48 (55P91)},
  MRNUMBER = {4357612},
       DOI = {10.2140/agt.2021.21.3513},
       URL = {https://doi.org/10.2140/agt.2021.21.3513},
}

@book {LodayVallette,
    AUTHOR = {Loday, Jean-Louis and Vallette, Bruno},
     TITLE = {Algebraic operads},
    SERIES = {Grundlehren der mathematischen Wissenschaften [Fundamental
              Principles of Mathematical Sciences]},
    VOLUME = {346},
 PUBLISHER = {Springer, Heidelberg},
      YEAR = {2012},
     PAGES = {xxiv+634},
      ISBN = {978-3-642-30361-6},
   MRCLASS = {18D50 (16E99)},
  MRNUMBER = {2954392},
MRREVIEWER = {Andrey Yu. Lazarev},
       DOI = {10.1007/978-3-642-30362-3},
       URL = {https://doi.org/10.1007/978-3-642-30362-3},
}

@article {GutierrezWhite,
    AUTHOR = {Guti\'{e}rrez, Javier J. and White, David},
     TITLE = {Encoding equivariant commutativity via operads},
   JOURNAL = {Algebr. Geom. Topol.},
  FJOURNAL = {Algebraic \& Geometric Topology},
    VOLUME = {18},
      YEAR = {2018},
    NUMBER = {5},
     PAGES = {2919--2962},
      ISSN = {1472-2747},
   MRCLASS = {55P42 (55P48 55P60 55P91 55U35)},
  MRNUMBER = {3848404},
MRREVIEWER = {Steffen Sagave},
       DOI = {10.2140/agt.2018.18.2919},
       URL = {https://doi.org/10.2140/agt.2018.18.2919},
}

@article {BonventrePereira,
    AUTHOR = {Bonventre, Peter and Pereira, Lu\'{i}s A.},
     TITLE = {Genuine equivariant operads},
   JOURNAL = {Adv. Math.},
  FJOURNAL = {Advances in Mathematics},
    VOLUME = {381},
      YEAR = {2021},
     PAGES = {Paper No. 107502, 133},
      ISSN = {0001-8708},
   MRCLASS = {18M60 (55P48)},
  MRNUMBER = {4205708},
MRREVIEWER = {Lada Peksov\'{a}},
       DOI = {10.1016/j.aim.2020.107502},
       URL = {https://doi.org/10.1016/j.aim.2020.107502},
}

@article {hausmann2022global,
    AUTHOR = {Hausmann, Markus},
     TITLE = {Global group laws and equivariant bordism rings},
   JOURNAL = {Ann. of Math. (2)},
  FJOURNAL = {Annals of Mathematics. Second Series},
    VOLUME = {195},
      YEAR = {2022},
    NUMBER = {3},
     PAGES = {841--910},
      ISSN = {0003-486X},
   MRCLASS = {57R85 (14L05 55N22 55P91)},
  MRNUMBER = {4413745},
       DOI = {10.4007/annals.2022.195.3.2},
       URL = {https://doi.org/10.4007/annals.2022.195.3.2},
}

@article {kervaire,
    AUTHOR = {Hill, Michael A. and Hopkins, Michael J. and Ravenel, Douglas C.},
     TITLE = {On the nonexistence of elements of {K}ervaire invariant one},
   JOURNAL = {Ann. of Math. (2)},
  FJOURNAL = {Annals of Mathematics. Second Series},
    VOLUME = {184},
      YEAR = {2016},
    NUMBER = {1},
     PAGES = {1--262},
      ISSN = {0003-486X},
   MRCLASS = {55P91 (55N22 55P42 55Q45 55T15 55U35 57R15)},
  MRNUMBER = {3505179},
MRREVIEWER = {Paul G. Goerss},
       DOI = {10.4007/annals.2016.184.1.1},
       URL = {https://doi.org/10.4007/annals.2016.184.1.1},
}

@article {tambara,
    AUTHOR = {Blumberg, Andrew J. and Hill, Michael A.},
     TITLE = {Incomplete {T}ambara functors},
   JOURNAL = {Algebr. Geom. Topol.},
  FJOURNAL = {Algebraic \& Geometric Topology},
    VOLUME = {18},
      YEAR = {2018},
    NUMBER = {2},
     PAGES = {723--766},
      ISSN = {1472-2747},
   MRCLASS = {55P91 (18B99 19A22 55N91)},
  MRNUMBER = {3773736},
MRREVIEWER = {Lennart Meier},
       DOI = {10.2140/agt.2018.18.723},
       URL = {https://doi.org/10.2140/agt.2018.18.723},
}

@article {mackey,
    AUTHOR = {Blumberg, Andrew J. and Hill, Michael A.},
     TITLE = {Equivariant stable categories for incomplete systems of
              transfers},
   JOURNAL = {J. Lond. Math. Soc. (2)},
  FJOURNAL = {Journal of the London Mathematical Society. Second Series},
    VOLUME = {104},
      YEAR = {2021},
    NUMBER = {2},
     PAGES = {596--633},
      ISSN = {0024-6107},
   MRCLASS = {55P42 (55P91)},
  MRNUMBER = {4311105},
       DOI = {10.1112/jlms.12441},
       URL = {https://doi.org/10.1112/jlms.12441},
}

\end{document}